 \newtheorem{theorem}{Theorem}[section]
 \newtheorem{proposition}[theorem]{Proposition}
 \newtheorem{corollary}[theorem]{Corollary}
 \newtheorem{lemma}[theorem]{Lemma}
 \newtheorem{example}[theorem]{Example}
 \newtheorem{remark}[theorem]{Remark}
\numberwithin{equation}{section}
\newenvironment{proof}{\smallskip\par{\sc Proof.}\enspace}%
 {{\unskip\nobreak\hfil\penalty50\hskip2em
          \hbox{}\nobreak\hfil{\rule[-1pt]{5pt}{10pt}}
          \parfillskip=0pt\finalhyphendemerits=0
          \par\medskip}} 
\newenvironment{proof of main thm}{\smallskip\par{\sc \textbf{Proof of Theorem 3.1.}}\enspace}%
 {{\unskip\nobreak\hfil\penalty50\hskip2em
          \hbox{}\nobreak\hfil{\rule[-1pt]{5pt}{10pt}}
          \parfillskip=0pt\finalhyphendemerits=0
          \par\medskip}} 
\begin{document}

\vspace*{.3in}

\begin{center}
\LARGE
{\sf Littlewood-Paley Type Inequality for Evolution Systems Associated with Pseudo-Differential Operators}
\end{center}

\bigskip

\begin{center}
Un Cig Ji \\
Department of Mathematics\\
Institute for Industrial and Applied Mathematics\\
Chungbuk National University\\
Cheongju 28644, Korea \\
\texttt{E-Mail:uncigji@chungbuk.ac.kr }
\end{center}

\begin{center}
Jae Hun Kim \\
Department of Mathematics\\
Chungbuk National University\\
Cheongju 28644, Korea \\
\texttt{E-Mail:jaehunkim@chungbuk.ac.kr }
\end{center}

\begin{abstract}
In this paper, we first prove that the kernel of convolution operator,
corresponding the composition of pseudo-differential operator and evolution system associated with the symbol depending on time,
satisfies the H\"ormander's condition.
Secondly, we prove that the convolution operator is a bounded linear operator
from the Besov space on $\mathbb{R}^{d}$ into $L^{q}(\mathbb{R}^{d};V)$ for a Banach space $V$.
Finally, by applying the Calder\'{o}n-Zygmund theorem for vector-valued functions,
we prove the Littlewood-Paley type inequality for evolution systems associated with pseudo-differential operators.
\end{abstract}

\bigskip
\noindent
\textbf{Mathematics Subject Classifications (2020):}  42B25,  42B37, 47G30

\bigskip
\noindent
{\bfseries Keywords:} pseudo-differential operator, evolution system, H\"ormander's condition, \\
Calder\'{o}n-Zygmund theorem, Littlewood-Paley inequality
\bigskip
\noindent

\newpage
\tableofcontents

\section{Introduction}
The Littlewood-Paley theory plays an important role in harmonic analysis, partial differential equations, and some related fields.
Especially, the Littlewood-Paley theory is a useful tool to characterize function spaces via the $L^{p}$-norm estimates of functions.
The Poisson semigroup $\{P_{t}\}_{t\geq 0}$ on $L^{p}(\mathbb{R}^{d})$ (for $1\le p\le \infty$) is defined by
\begin{equation}\label{eqn: Poisson semigp and kernel}
P_{t}f(x)=\left(k(t,\cdot)*f\right)(x),\quad k(t,x)=\frac{\Gamma\left(\frac{d+1}{2}\right)}{\pi^{\frac{d+1}{2}}}\frac{t}{(t^{2}+|x|^{2})^{\frac{d+1}{2}}}
\end{equation}
for $t>0$ and $f\in L^{p}(\mathbb{R}^{d})$, and $P_{0}=I$ (the identity operator),
where $*$ is the convolution in $\mathbb{R}^{d}$ and $k$ is the Poisson kernel on $\mathbb{R}^{d}$ (see \cite{Stein 1970}).
The Littlewood-Paley $g$-function for the Poisson semigroup $\{P_{t}\}_{t\geq 0}$ is defined by
\begin{equation}\label{eqn: g function for Poisson}
g(f)(x)=\left(\int_{0}^{\infty}\left|t\frac{\partial}{\partial t}P_{t}f(x)\right|^2 \frac{dt}{t}\right)^{\frac{1}{2}},
\end{equation}
and then the Littlewood-Paley inequality for the Poisson semigroup $\{P_{t}\}_{t\geq 0}$ is stated as follows.
\begin{theorem}[\cite{Stein 1970}]
For any $1<p<\infty$, there exist positive constants $C_{1}, C_{2}$ depending on $p$ and $d$ such that
\begin{equation}\label{eqn: classical LP}
C_{1}\|f\|_{L^{p}(\mathbb{R}^{d})}\leq \|g(f)\|_{L^{p}(\mathbb{R}^{d})}\leq C_{2}\|f\|_{L^{p}(\mathbb{R}^{d})}.
\end{equation}
\end{theorem}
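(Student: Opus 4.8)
The plan is to prove the two inequalities in \eqref{eqn: classical LP} separately. For the right-hand inequality the idea is to realize $g$ as the pointwise norm of a convolution operator with values in a Hilbert space and then invoke the vector-valued Calder\'on--Zygmund theorem; for the left-hand one the idea is a polarization identity combined with $L^p$-duality.

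\emph{Upper bound.} I would set $\mathcal H=L^2\big((0,\infty),\tfrac{dt}{t}\big)$ and $K(x)=\big(t\,\partial_t k(t,x)\big)_{t>0}$, so that $g(f)(x)=\pNorm{\mathcal H}{(K*f)(x)}$; it then suffices to show that $f\mapsto K*f$ is bounded from $L^p(\mathbb R^d)$ into $L^p(\mathbb R^d;\mathcal H)$ for $1<p<\infty$. By the vector-valued Calder\'on--Zygmund theorem this reduces to (i) $L^2$-boundedness of $f\mapsto K*f$ and (ii) the H\"ormander condition for the kernel $K$. For (i), since $\widehat{P_tf}(\xi)=e^{-t|\xi|}\hat f(\xi)$, Plancherel's theorem and the substitution $s=t|\xi|$ give
\[
\pNorm{L^2}{g(f)}^2=\int_{\mathbb R^d}|\hat f(\xi)|^2\int_0^\infty\big(t|\xi|e^{-t|\xi|}\big)^2\,\frac{dt}{t}\,d\xi=\Big(\int_0^\infty s e^{-2s}\,ds\Big)\pNorm{L^2}{f}^2=\tfrac14\pNorm{L^2}{f}^2 .
\]
For (ii), I would use that $k(t,\cdot)$ is a constant multiple of the Poisson kernel on $\mathbb R^{d+1}_{+}$, hence smooth, harmonic and homogeneous of degree $-d$ away from the origin; consequently $|\nabla_x\partial_t k(t,x)|\lesssim(t+|x|)^{-(d+2)}$, and by scaling $\pNorm{\mathcal H}{\nabla_x K(x)}^2=\int_0^\infty t^2|\nabla_x\partial_t k(t,x)|^2\,\tfrac{dt}{t}\lesssim\int_0^\infty t(t+|x|)^{-2(d+2)}\,dt\lesssim|x|^{-2(d+1)}$. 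The mean value theorem then gives $\pNorm{\mathcal H}{K(x-y)-K(x)}\le|y|\sup_{0\le s\le1}\pNorm{\mathcal H}{\nabla_x K(x-sy)}\lesssim|y|\,|x|^{-(d+1)}$ whenever $|x|>2|y|$, and integrating this over $\{|x|>2|y|\}$ yields a bound independent of $y$, i.e.\ the H\"ormander condition.

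\emph{Lower bound.} Using Plancherel once more, together with $\int_0^\infty\big(t|\xi|e^{-t|\xi|}\big)^2\tfrac{dt}{t}=\tfrac14$ and Fubini's theorem, I would establish for Schwartz functions $f,h$ the polarization identity
\[
\int_{\mathbb R^d}f(x)\,\overline{h(x)}\,dx=4\int_{\mathbb R^d}\int_0^\infty\big(t\partial_t P_tf(x)\big)\,\overline{\big(t\partial_t P_th(x)\big)}\,\frac{dt}{t}\,dx .
\]
Applying the Cauchy--Schwarz inequality in the variable $\tfrac{dt}{t}$ and then H\"older's inequality in $x$ gives $\big|\int f\bar h\big|\le 4\int_{\mathbb R^d}g(f)(x)\,g(h)(x)\,dx\le4\pNorm{L^p}{g(f)}\pNorm{L^{p'}}{g(h)}$, and the upper bound just proved, applied with the conjugate exponent $p'\in(1,\infty)$, yields $\pNorm{L^{p'}}{g(h)}\le C\pNorm{L^{p'}}{h}$. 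Taking the supremum over $h$ with $\pNorm{L^{p'}}{h}\le1$ and using the duality $\big(L^{p'}(\mathbb R^d)\big)^{*}=L^{p}(\mathbb R^d)$ gives $\pNorm{L^p}{f}\le4C\pNorm{L^p}{g(f)}$, first for Schwartz $f$ and then, by density, for all $f\in L^p(\mathbb R^d)$.

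\emph{Main difficulty.} The technical core is step (ii): verifying the H\"ormander condition for the $\mathcal H$-valued kernel $K$, which requires the gradient estimate on the Poisson kernel and the accompanying scaling computation to be carried out carefully and uniformly in $y$. A secondary point is justifying the polarization identity on a dense class and the limiting argument; this is also where the restriction $1<p<\infty$ is used in an essential way, the lower endpoint through the $L^{p'}$-boundedness of $g$ and the upper endpoint through the duality of $L^p$.
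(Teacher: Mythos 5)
The paper does not prove this theorem; it cites it directly from Stein's \emph{Singular Integrals and Differentiability Properties of Functions} as classical background. Your proof is correct and reproduces the standard argument from that reference: the upper bound via the vector-valued Calder\'on--Zygmund theorem applied to the $L^2\big((0,\infty),\tfrac{dt}{t}\big)$-valued kernel $t\partial_t k(t,\cdot)$ (with the $L^2$ base estimate by Plancherel and the H\"ormander condition from the gradient bound $|\nabla_x\partial_t k(t,x)|\lesssim(t+|x|)^{-(d+2)}$), and the lower bound via the polarization identity $\int f\overline{h}=4\int\!\!\int_0^\infty(t\partial_tP_tf)\overline{(t\partial_tP_th)}\tfrac{dt}{t}\,dx$ together with Cauchy--Schwarz, H\"older, and $L^p$--$L^{p'}$ duality. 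This is also exactly the strategy the paper itself deploys (vector-valued Calder\'on--Zygmund theory plus a base-exponent estimate) for its main Theorem 6.1, so the proposal is consistent in spirit with the paper's methodology.
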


There are several modified forms of the Littlewood-Paley $g$-function as given in \eqref{eqn: g function for Poisson}.
For instance, the operator $\frac{\partial}{\partial t}$ can be replaced by
$\nabla_{n+1}=\left(\frac{\partial}{\partial t},\frac{\partial}{\partial x_{1}},\cdots,\frac{\partial}{\partial x_{d}}\right)$ or
$\nabla=\left(\frac{\partial}{\partial x_{1}},\cdots,\frac{\partial}{\partial x_{d}}\right)$.
Additionally, the Poisson semigroup $\{P_{t}\}_{t\geq 0}$ in \eqref{eqn: g function for Poisson}
can be replaced by the heat semigroup $\{T_{t}\}_{t\geq0}$ defined by
\[
T_{t}f(x)=p(t,\cdot)*f(x),\quad p(t,x)=\frac{1}{(4\pi t)^{\frac{d}{2}}}e^{-\frac{|x|^2}{4t}}
\]
for $t>0$ and $f\in L^{p}(\mathbb{R}^{d})$, and $T_{0}=I$.
For more information, we refer to \cite{Stein 1970} and \cite{Frazier 1991}.

For a measure space $(X,\mu)$, a symmetric diffusion semigroup $\boldsymbol{\mathcal{T}}=\{\mathcal{T}_{t}\}_{t\ge0}$
is a family of operators defined on $\bigcup_{1\leq p\leq \infty}L^{p}(X,\mu)$ satisfying the following properties:
\begin{itemize}
  \item [(i)]$\mathcal{T}_{0}=1$ and $\mathcal{T}_{t+s}=\mathcal{T}_{t}\mathcal{T}_{s}$ for any $t,s>0$,
  \item [(ii)]$\lim_{t\to 0}\mathcal{T}_{t}f=f$ in $L^{2}(X,\mu)$ for any $f\in L^{2}(X,\mu)$,
  \item [(iii)]$\|\mathcal{T}_{t}f\|_{L^{p}(X,\mu)}\leq \|f\|_{L^{p}(X,\mu)}$ for any $1\leq p\leq \infty$ and $t>0$,
  \item [(iv)]$\mathcal{T}_{t}$ is selfadjoint on $L^{2}(X,\mu)$,
  \item [(v)] For any $f\geq 0$, it holds that $\mathcal{T}_{t}f\geq 0$,
  \item [(vi)] $\mathcal{T}_{t}1=1$.
\end{itemize}
The (subordinated) Poisson semigroup $\{\mathcal{P}_{t}\}_{t\geq 0}$ of a symmetric diffusion semigroup $\boldsymbol{\mathcal{T}}$ is defined by
\[
\mathcal{P}_{t}f=\frac{t}{2\sqrt{\pi}}\int_{0}^{\infty}s^{-\frac{3}{2}}e^{-\frac{t^2}{4s}}\mathcal{T}_{s}fds
\]
(see, e.g., \cite[p. 47]{Stein 1970-2}).
The typical examples of a symmetric diffusion semigroup and a subordinated Poisson semigroup
are the heat semigroup and the Poisson semigroup on $\mathbb{R}^{d}$, respectively.
As a generalization, the Littlewood-Paley $g$-function of the following form
can be considered for $k\in \mathbb{N}$ (natural numbers) and $1<q<\infty$:
\begin{equation}\label{eqn: LP g function}
g_{k,q,\mathcal{T}}(f)(x)=\left(\int_{0}^{\infty}\left|t^{k}\frac{\partial^{k}}{\partial t^{k}}\mathcal{T}_{t}f(x)\right|^q \frac{dt}{t}\right)^{\frac{1}{q}},
\end{equation}
and then in \cite{Stein 1970-2}, the following theorem has been proven.
\begin{theorem}
If $\{\mathcal{T}_{t}\}_{t\geq 0}$ is a symmetric diffusion semigroup,
then for any $1<p<\infty$ and $k\in\mathbb{N}$
there exists a constant $C>0$ depending on $p$ and $k$ such that for any $f\in L^{p}(\Omega,\mu)$,
\begin{equation}\label{eqn: Stein LP ineq for SDS}
C^{-1}\|f-E_{0}(f)\|_{L^{p}}\leq \|g_{k,2,\mathcal{T}}(f)\|_{L^{p}}\leq C\|f\|_{L^{p}},
\end{equation}
where $E_{0}(f)=\lim_{t\to\infty}\mathcal{T}_{t}f$.
\end{theorem}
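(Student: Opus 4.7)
The plan is to follow Stein's strategy in \cite{Stein 1970-2}: prove \eqref{eqn: Stein LP ineq for SDS} on $L^2$ via the spectral theorem, upgrade the upper bound to $1<p<\infty$ using the analyticity of $\mathcal{T}_t$ on a sector together with a pointwise comparison to a maximal function, and deduce the lower bound by polarization and duality.

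For the $L^2$ case, self-adjointness of $\mathcal{T}_t$ together with contractivity and $\mathcal{T}_t 1 = 1$ guarantees that the generator $-A$ is self-adjoint and non-negative on $L^2(X,\mu)$, so the spectral theorem gives $\mathcal{T}_t = \int_0^\infty e^{-t\lambda}\,dE_\lambda$ with $E_0$ the projection onto $\ker A$. A direct computation via Plancherel and Fubini produces
\begin{equation*}
\bigl\|g_{k,2,\mathcal{T}}(f)\bigr\|_{L^2}^2 = \int_0^\infty \Bigl(\int_0^\infty s^{2k-1} e^{-2s}\,ds\Bigr)\, d\|E_\lambda f\|_{L^2}^2 = \frac{\Gamma(2k)}{2^{2k}}\,\|f - E_0 f\|_{L^2}^2,
\end{equation*}
which gives both directions of \eqref{eqn: Stein LP ineq for SDS} simultaneously when $p=2$.

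To obtain the upper bound for general $p$, I would invoke Stein's analyticity theorem, asserting that a symmetric diffusion semigroup extends to a contraction on $L^p$ for complex $z$ in the sector $|\arg z| < \pi/2 - \pi|1/2 - 1/p|$. Writing $t^k\partial_t^k \mathcal{T}_t f(x)$ as a Cauchy integral of $\mathcal{T}_z f(x)$ on a circle of radius $ct$ around $t$ yields a pointwise bound of the form $g_{k,2,\mathcal{T}}(f)(x)^2 \leq C\,\mathcal{M}f(x)\,g^*(f)(x)$, where $\mathcal{M}f(x) = \sup_{t>0} |\mathcal{T}_t f(x)|$ and $g^*$ is an auxiliary Littlewood-Paley functional bounded on $L^2$ by the spectral argument. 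The Hopf--Dunford--Schwartz maximal ergodic theorem, applicable because of properties (iii) and (v), gives $\|\mathcal{M}f\|_{L^p} \leq C\|f\|_{L^p}$ for $1<p\leq\infty$; combining this with the $L^2$ bound on $g^*$ via H\"older's inequality and Marcinkiewicz interpolation yields $\|g_{k,2,\mathcal{T}}(f)\|_{L^p} \leq C\|f\|_{L^p}$. For $k>1$ the factorization $t^k\partial_t^k \mathcal{T}_t = (t\partial_t)(t\partial_t-1)\cdots(t\partial_t-k+1)\mathcal{T}_t$ reduces matters to the first-derivative case.

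The lower bound is then a duality argument: for $h\in L^2 \cap L^{p'}$ in the range of $I - E_0$, polarization of the $L^2$ identity above gives
\begin{equation*}
\bigl|\langle f - E_0 f,\, h\rangle\bigr| \leq \frac{2^{2k}}{\Gamma(2k)} \int_X \!\int_0^\infty t^{2k}\,\bigl|\partial_t^k \mathcal{T}_t f(x)\bigr|\,\bigl|\partial_t^k \mathcal{T}_t h(x)\bigr|\,\frac{dt}{t}\,d\mu(x) \leq C\,\|g_{k,2,\mathcal{T}}(f)\|_{L^p}\,\|g_{k,2,\mathcal{T}}(h)\|_{L^{p'}}
\end{equation*}
by two applications of the Cauchy--Schwarz inequality, after which the upper bound applied in $L^{p'}$ (requiring $1<p'<\infty$, hence $1<p<\infty$) together with the supremum over $h$ with $\|h\|_{L^{p'}}\leq 1$ produces $\|f - E_0 f\|_{L^p} \leq C\,\|g_{k,2,\mathcal{T}}(f)\|_{L^p}$. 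The main obstacle is the pointwise subordination $g_{k,2,\mathcal{T}}(f)^2 \lesssim \mathcal{M}f \cdot g^*(f)$: its proof requires the full analytic-continuation machinery for symmetric diffusion semigroups, and the auxiliary $g^*$ must be chosen delicately so that Cauchy--Schwarz between an $L^p$-controlled factor and an $L^2$-controlled factor closes under Marcinkiewicz interpolation.
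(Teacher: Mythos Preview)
The paper does not prove this theorem. It is quoted in the introduction as a classical result of Stein, with the attribution ``in \cite{Stein 1970-2}, the following theorem has been proven,'' and no argument is supplied; the body of the paper is devoted to a different inequality (Theorem~\ref{thm: Lp boundedness of S lambda infty}) for evolution systems generated by time-dependent pseudo-differential symbols, proved via H\"ormander kernel estimates and the vector-valued Calder\'on--Zygmund theorem rather than by semigroup-theoretic methods. So there is no ``paper's own proof'' to compare your proposal against.

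That said, your outline is the correct one for Stein's theorem and tracks the argument in \cite{Stein 1970-2}: the spectral identity on $L^{2}$, the analytic extension of $\mathcal{T}_{t}$ into a sector combined with the Hopf--Dunford--Schwartz maximal inequality to push the upper bound to $1<p<\infty$, and then the duality/polarization step for the lower bound. The one place where your sketch is vaguer than a proof is exactly where you flag it: the pointwise inequality $g_{k,2,\mathcal{T}}(f)^{2}\lesssim \mathcal{M}f\cdot g^{*}(f)$ and the choice of $g^{*}$ are the technical heart of Stein's argument, and in his treatment this is carried out first for the subordinated Poisson semigroup $\mathcal{P}_{t}$ (where the needed analyticity and Cauchy-integral bounds are cleanest) and then transferred to $\mathcal{T}_{t}$; you would need to make that passage explicit to have a complete proof.
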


The Littlewood-Paley $g$-function $g_{k,2,\mathcal{T}}$ is also important
for proving the maximal theorem and the multiplier theorem related to
the symmetric diffusion semigroup $\{\mathcal{T}_{t}\}_{t \geq 0}$ (see, e.g., \cite{Stein 1982}).
Since then, some authors studied the inequality \eqref{eqn: Stein LP ineq for SDS} for a Banach space-valued version of the function $g_{k,q,\mathcal{T}}$ defined in \eqref{eqn: LP g function}.
For a Banach space $(V,\|\cdot\|_{V})$ with a norm $\|\cdot\|_{V}$, $1<q<\infty$
and $k\in\mathbb{N}$, we define
\[
g_{k,q,\mathcal{S};V}(f)(x)
  =\left(\int_{0}^{\infty}
    \left\|t^{k}\frac{\partial^k}{\partial t^k}\mathcal{S}_{t}f(x)\right\|_{V}^q \frac{dt}{t}\right)^{\frac{1}{q}},
\]
where $\{\mathcal{S}_{t}\}_{t\ge0}$ is a symmetric diffusion semigroup, i.e., $\mathcal{S}_{t}=\mathcal{T}_{t}$ for all $t\ge0$,
or its subordinated Poisson semigroup.

In \cite{Martinez 2006},
the authors verified the relationship between the geometric property of a Banach space $V$
and the $L^{p}$-boundedness of the function $g_{1,q,\mathcal{P};V}$.
Recently, in \cite{Xu 2020}, the author extended the results obtained in \cite{Martinez 2006}
regarding the Littlewood-Paley $g$-function $g_{1,q,\mathcal{P};V}$
to the case of the Littlewood-Paley $g$-function $g_{k,q,\mathcal{T};V}$.
In \cite{Hytonen 2007}, the author proved the Littlewood-Paley-Stein inequality
for the Littlewood-Paley $g$-function defined by
the expectation of UMD Banach space-valued stochastic integrals with respect to Brownian motion,
which provides a characterization of UMD Banach spaces in terms of the Littlewood-Paley-Stein inequality.

On the other hand, in \cite{Torrea 2014},
the authors considered the fractional Littlewood-Paley $g$-function,
which is of a form that includes the fractional derivative of $\{\mathcal{P}_{t}\}_{t\geq 0}$.
Let $\alpha>0$ and $m$ be the smallest integer such that $m>\alpha$.
The fractional derivative of the subordinated semigroup $\{\mathcal{P}_{t}\}_{t\geq 0}$ is  defined by
\begin{align*}
\frac{\partial^{\alpha}}{\partial t^{\alpha}}\mathcal{P}_{t}f(x)
=\frac{e^{-i\pi (m-\alpha)}}{\Gamma(m-\alpha)}
  \int_{0}^{\infty}\frac{\partial^{m}}{\partial t^{m}}\mathcal{P}_{t+s}f(x)s^{m-\alpha-1}ds,
\end{align*}
and the fractional Littlewood-Paley $g$-function $g_{\alpha,q,\mathcal{P};V}$ is defined by
\[
g_{\alpha,q,\mathcal{P};V}(f)(x)=\left(\int_{0}^{\infty}\left\|t^{\alpha}\frac{\partial^\alpha}{\partial t^\alpha}\mathcal{P}_{t}f(x)\right\|_{V}^q \frac{dt}{t}\right)^{\frac{1}{q}}.
\]
Then the authors proved a characterization of Lusin type and cotype of Banach spaces
in terms of the Littlewood-Paley-Stein inequality (see \cite[Theorems A and B]{Torrea 2014}).

Let $\psi:[0,\infty)\times \mathbb{R}^{d}\rightarrow \mathbb{C}$ be a measurable function.
For each $t\geq 0$, the pseudo-differential operator $L_{\psi}(t)$ with the symbol $\psi$
is defined by
\begin{align*}
L_{\psi}(t)f(x)=\mathcal{F}^{-1}(\psi(t,\xi)\mathcal{F}f(\xi))(x)
\end{align*}
for suitable functions $f$ defined on $\mathbb{R}^{d}$,
where $\mathcal{F}$ and $\mathcal{F}^{-1}$ are the Fourier transform
and Fourier inverse transform, respectively (see \eqref{eqn:FTand FIT}).
For each $t>s$, we define the operator $\mathcal{T}_{\psi}(t,s)$ by
\begin{equation}\label{eqn: solution operator}
\mathcal{T}_{\psi}(t,s)f(x)=p_{\psi}(t,s,\cdot)*f(x)
\end{equation}
for suitable functions $f$, where
\begin{align*}
p_{\psi}(t,s,x):=\mathcal{F}^{-1}\left(\exp\left(\int_{s}^{t}\psi(r,\xi)dr\right)\right)(x).
\end{align*}
if the right hand side is well-defined, and so we have
\begin{align*}
\mathcal{T}_{\psi}(t,s)f(x)
=\mathcal{F}^{-1}\left(\exp\left(\int_{s}^{t}\psi(r,\xi)dr\right)\mathcal{F}f(\xi)\right)(x).
\end{align*}

In this paper, as a main theorem, we prove the Littlewood-Paley type inequality (see Theorem \ref{thm: Lp boundedness of S lambda infty})
for the pseudo-differential operator $L_{\psi_{1}}(l)$ and the evolution system $\{\mathcal{T}_{\psi_{2}}(t,s)\}_{t\ge s\ge0}$ for the pair $(\psi_{1},\psi_{2})\in\mathfrak{S}^{2}$ (see Section \ref{subsec:Symbols of PDO}).
More precisely, under certain conditions,
we prove that there exists a constant $C>0$ such that
for any $s,l\geq 0$ and $f\in L^{p}(\mathbb{R}^{d})$,
\begin{align*}
\int_{\mathbb{R}^{d}}\left(\int_{s}^{s+a}(t-s)^{\frac{q\gamma_{1}}{\gamma_{2}}-1}
   |L_{\psi_{1}}(l)\mathcal{T}_{\psi_{2}}(t,s)f(x)|^{q}dt\right)^{\frac{p}{q}}dx
\leq C\int_{\mathbb{R}^{d}}|f(x)|^{p}dx,
\end{align*}
where $\gamma_{1},\gamma_{2}$ are corresponding to the orders of the symbols $\psi_{1}$ and $\psi_{2}$, respectively
(see \textbf{(S1)} in Section \ref{subsec:Symbols of PDO}).
Furthermore, if $\psi_{1}(l,\xi)$ and $\psi_{2}(r,\xi)$ are constant with respect to the variables $l$ and $r$, respectively,
and $\psi_{1},\psi_{2}$ satisfy certain homogeneity (see \eqref{eqn:homogenioty of psi}),
then there exists a constant $C>0$ such that for any $s\geq 0$ and $f\in L^{p}(\mathbb{R}^{d})$,
\begin{align*}
\int_{\mathbb{R}^{d}}\left(\int_{s}^{\infty}(t-s)^{\frac{q\gamma_{1}}{\gamma_{2}}-1}
 |L_{\psi_{1}}\mathcal{T}_{\psi_{2}}(t,s)f(x)|^{q}dt\right)^{\frac{p}{q}}dx
\leq C\int_{\mathbb{R}^{d}}|f(x)|^{p}dx.
\end{align*}

This paper is organized as follows.
In Section \ref{sec: pre},
we discuss the integrability of the Fourier transforms of functions,
and recall the definitions and properties related to the pseudo-differential operator
and the operator $\mathcal{T}_{\psi}(t,s)$ with a symbol $\psi$.
In Section \ref{sec:Estimates},
we estimate the gradient of the kernel of convolution operators.
Especially, we prove that the kernel satisfies the H\"ormander's condition.
In Section \ref{sec:Convolution Operators},
we prove that the convolution operator corresponding to $L_{\psi_{1}}(l)\mathcal{T}_{\psi_{2}}(t,s)$
is a bounded linear operator from the Besov space on $\mathbb{R}^{d}$ into $L^{q}(\mathbb{R}^{d};V)$ for a Banach space $V$
(see Theorem \ref{lem: LP ineq for p equal lambda fixed s}).
In Section \ref{sec:Boundedness of Convolution Operators},
by applying the Calder\'{o}n-Zygmund theorem for vector-valued functions,
we prove the boundedness of the convolution operators
corresponding to $L_{\psi_{1}}(l)\mathcal{T}_{\psi_{2}}(t,s)$
(see Theorems \ref{thm: LP ineq p equal lambda} and \ref{thm: convolution is bdd on Lp}).
In Section \ref{sec:Main Results},
we prove the Littlewood-Paley type inequality for evolution systems associated with pseudo-differential operators
(see Theorem \ref{thm: Lp boundedness of S lambda infty}).

In \cite{Krylov 1994}, as a generalization of the Littlewood-Paley inequality,
the author proved a Littlewood-Paley type inequality for the heat semigroup acting on Hilbert space-valued $L^{p}$-spaces.
Since then, in \cite{I. Kim 2012}, the authors extended the generalized Littlewood-Paley inequality established in \cite{Krylov 1994}
to the fractional heat semigroup.
Furthermore, in \cite{I. Kim K.-H. Kim 2016},
the authors extended the results of the generalized Littlewood-Paley inequality obtained in \cite{I. Kim 2012}
to the case of evolution systems induced by the pseudo-differential operator with the symbol $\psi$ satisfying certain conditions.
The generalization of the Littlewood-Paley inequality in our setting and its extensions are currently in progress
and will appear in a separate paper.

\section{Symbols of Pseudo-Differential Operators}\label{sec: pre}

Throughout this paper, let $1\leq p< \infty$ and let $L^{p}(\mathbb{R}^{d})$ be the Banach space of all measurable functions
$f:\mathbb{R}^{d}\rightarrow\mathbb{R}$ such that
\[
\|f\|_{L^{p}(\mathbb{R}^{d})}:=\left(\int_{\mathbb{R}^{d}}|f(x)|^{p}dx\right)^{\frac{1}{p}}<\infty.
\]
For $f\in L^{1}(\mathbb{R}^{d})$, we denote the Fourier transform and inverse Fourier transform by
\begin{align}\label{eqn:FTand FIT}
\mathcal{F}(f)(\xi)&:=\frac{1}{(2\pi)^{d/2}}\int_{\mathbb{R}^{d}}e^{-i x\cdot\xi}f(x)dx,\nonumber\quad\\
\mathcal{F}^{-1}(f)(x)&:=\frac{1}{(2\pi)^{d/2}}\int_{\mathbb{R}^{d}}e^{i x\cdot\xi}f(\xi)d\xi.
\end{align}
Let $C_{c}^{\infty}(\mathbb{R}^{d})$ be the space of all infinitely differentiable functions with compact support.

\subsection{Integrability of Fourier Transform}

We note that for any $w\in \mathbb{R}^{d}$ (for $d\in\mathbb{N}$) with $|w|=1$, and $\eta>0$, $\zeta<\eta$,
\begin{align}\label{eqn: integral 1}
\int_{|z|\geq \frac{1}{2}}\frac{|w+z|^{\zeta}}{|z|^{d+\eta}}dz &<\infty.
\end{align}

\begin{lemma}\label{lem: f-lemma 2}
Let $x\in\mathbb{R}^{d}$ (for $d\in\mathbb{N}$) and $\theta\in [0,1]$ be given. Let $f:[0,\infty)\rightarrow\mathbb{R}$ be a decreasing function. Then for any $\zeta\in\mathbb{R}$ and $\delta<0$, there exists a constant $C>0$ depending on $d,\zeta$ and $\eta$ such that
\[
\int_{|y|<\frac{|x|}{2} }|x+\theta y|^{\zeta}f(|x+\theta y|)\frac{1}{|y|^{d+\delta}}dy
\leq C|x|^{\zeta-\delta}f\left(\frac{|x|}{2}\right).
\]
\end{lemma}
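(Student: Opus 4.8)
The plan is to reduce everything to two elementary facts: a two-sided control of $|x+\theta y|$ on the domain of integration, and the convergence of a radial integral that is guaranteed precisely by the hypothesis $\delta<0$. First I would fix $x\neq0$ (the case $x=0$ being trivial since the domain is empty) and observe that for $|y|<\tfrac{|x|}{2}$ and $\theta\in[0,1]$ one has, by the triangle inequality,
\[
\tfrac{|x|}{2}<|x|-|y|\le |x|-\theta|y|\le|x+\theta y|\le|x|+\theta|y|<\tfrac{3|x|}{2}.
\]
In particular $|x+\theta y|$ stays away from $0$ and $\infty$ uniformly in $y$ on this ball, comparably to $|x|$.

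Next I would dispose of the two factors involving $x+\theta y$. Since $f$ is decreasing and $|x+\theta y|>\tfrac{|x|}{2}$, we get $f(|x+\theta y|)\le f\!\left(\tfrac{|x|}{2}\right)$. For the power factor $|x+\theta y|^{\zeta}$ I would split on the sign of $\zeta$: if $\zeta\ge0$ then $|x+\theta y|^{\zeta}\le\left(\tfrac{3}{2}\right)^{\zeta}|x|^{\zeta}$, and if $\zeta<0$ then $|x+\theta y|^{\zeta}\le 2^{-\zeta}|x|^{\zeta}$; in either case $|x+\theta y|^{\zeta}\le C_{\zeta}|x|^{\zeta}$ with $C_{\zeta}=\max\{(3/2)^{\zeta},2^{-\zeta}\}$. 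Pulling these two bounds out of the integral reduces the problem to estimating $\int_{|y|<|x|/2}|y|^{-d-\delta}\,dy$.

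Finally, I would compute this last integral in polar coordinates. Writing $\omega_{d-1}$ for the surface measure of the unit sphere in $\mathbb{R}^{d}$,
\[
\int_{|y|<\frac{|x|}{2}}\frac{dy}{|y|^{d+\delta}}
=\omega_{d-1}\int_{0}^{|x|/2} r^{d-1-d-\delta}\,dr
=\omega_{d-1}\int_{0}^{|x|/2} r^{-1-\delta}\,dr
=\frac{\omega_{d-1}}{-\delta}\left(\frac{|x|}{2}\right)^{-\delta},
\]
where the integral converges at the origin exactly because $-\delta>0$, i.e.\ $\delta<0$. Combining the three estimates yields
\[
\int_{|y|<\frac{|x|}{2}}|x+\theta y|^{\zeta}f(|x+\theta y|)\frac{dy}{|y|^{d+\delta}}
\le C_{\zeta}\,\frac{\omega_{d-1}\,2^{\delta}}{-\delta}\,|x|^{\zeta-\delta}\,f\!\left(\frac{|x|}{2}\right),
\]
which is the claimed bound with $C=C_{\zeta}\,\omega_{d-1}\,2^{\delta}/(-\delta)$, depending only on $d,\zeta,\delta$. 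There is no genuine obstacle here; the only point requiring a little care is the case analysis on the sign of $\zeta$ in the second step, and remembering that $\delta<0$ is used solely to guarantee integrability of $|y|^{-d-\delta}$ near the origin (so the constant blows up as $\delta\uparrow0$, consistently with the $\tfrac{1}{-\delta}$ factor).
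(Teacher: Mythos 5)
Your proposal is correct and follows essentially the same route as the paper: establish the two-sided comparison $\tfrac{|x|}{2}\le|x+\theta y|\le\tfrac{3|x|}{2}$ on the ball, use it together with monotonicity of $f$ to pull the factor $|x+\theta y|^{\zeta}f(|x+\theta y|)$ out as $C|x|^{\zeta}f(|x|/2)$, and then integrate $|y|^{-d-\delta}$ in polar coordinates. The only differences are cosmetic — you spell out the case split on the sign of $\zeta$ and note the trivial $x=0$ case — and you correctly identify that the constant depends on $d,\zeta,\delta$ (the $\eta$ in the statement is evidently a typo for $\delta$).
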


\begin{proof}
Note that if $|y|\leq \frac{|x|}{2}$ and $\theta\in [0,1]$, then we see that
\begin{align}
\frac{1}{2}|x|\le |x+\theta y|\le \frac{3}{2}|x|.\label{eqn: x+ theta y lower bound}
\end{align}
Therefore, for any $\zeta\in\mathbb{R}$, since $f$ is decreasing, we have
\begin{align}\label{eqn:bdd of theta-f1}
|x+\theta y|^{\zeta}f(|x+\theta y|)\leq C|x|^{\zeta}f\left(\frac{|x|}{2}\right).
\end{align}
On the other hand, by changing variables by $y=|x|z$ and the change of variables to polar coordinates, we see that
\begin{align}\label{eqn:int by polar}
\int_{|y|<\frac{|x|}{2}}\frac{1}{|y|^{d+\delta}}dy
\leq \sigma(\partial B_{1})\frac{1}{-\delta}\left(\frac{1}{2}\right)^{-\delta}|x|^{\zeta-\delta},
\end{align}
where $\partial B_{1}$ and $d\sigma$ are the unit sphere and the surface measure on $\partial B_{1}$, respectively.
Therefore, by combining \eqref{eqn:bdd of theta-f1} and \eqref{eqn:int by polar}, we have the desired assertion.
\end{proof}

Let $\eta\geq 0$ and $\zeta>\eta-\frac{d}{2}$ (for $d\in\mathbb{N}$) be given.
We now consider a class $\boldsymbol{F}_{\eta,\zeta}$ of all nonnegative decreasing integrable functions
$f: [0,\infty)\rightarrow \mathbb{R}$ satisfying that
there exist positive constants $T,C_{f},\alpha_{f},\beta_{f}>0$ with $T\ge1$, $\alpha:=\alpha_{f}>\zeta+d$ and
$\max\{\zeta-\eta,0\}<\beta:=\beta_{f}<\zeta-\eta+\frac{d}{2}$ such that
\begin{align}
f(t)&\leq C_{f}(1+t)^{-\beta},\quad\text{for}\quad 0\leq t <T,\label{eqn: f condition 1}\\
f(t)&\leq C_{f}t^{-\alpha},\quad\text{for}\quad t\geq T.\label{eqn: f condition 1-1}
\end{align}
In fact, from the above conditions, if $f\in\boldsymbol{F}_{\eta,\zeta}$, then it is clear that
\begin{align*}
0<\beta<\zeta-\eta+\frac{d}{2}<\zeta+d<\alpha,
\end{align*}
which implies that for all $t\ge T$, $f(t)\leq C_{f}t^{-\alpha}\le C_{f}t^{-\beta}$, and for all $0<t <T$,
\begin{align*}
f(t)&\leq C_{f}(1+t)^{-\beta}\le C_{f} t^{-\beta}.
\end{align*}
Therefore, for all $t>0$, it holds that
\begin{align}\label{eqn:PB of f}
f(t)\le C_{f} t^{-\beta}.
\end{align}

\begin{example}
\upshape
Let $f$ be a nonnegative decreasing integrable function defined on $[0,\infty)$
satisfying that for any $\sigma>0$, there exist constants $K,c>0$ such that for all $t\geq0$,
\begin{align}\label{eqn: f-condition}
  t^{\sigma}f(t)\leq Kf(ct).
\end{align}
Then we can easily see that $f\in \boldsymbol{F}_{\eta,\zeta}$.
As a concrete example of elements in $\boldsymbol{F}_{\eta,\zeta}$, we can consider a function $f(t)=e^{-t^{\gamma}}$ ($t\geq 0$) for some $\gamma>0$.
Then we can easily see that $f$ satisfies the condition given in \eqref{eqn: f-condition} and thus $f\in\boldsymbol{F}_{\eta,\zeta}$.
\end{example}

\begin{lemma}\label{lem: fractional laplacian is in L2-0}
Let $\eta\ge0$ and let $h:\mathbb{R}^{d}\to\mathbb{R}$ be a measurable function.
Suppose that there exist constants $C_{1}>0$, $\zeta>\eta-\frac{d}{2}$ and $f\in \boldsymbol{F}_{\eta,\zeta}$ such that
\begin{align}\label{eqn:bddness of h}
|h(x)|&\leq C_{1} |x|^{\zeta}f(|x|)\quad \text{for all } x\in \mathbb{R}^{d}\setminus\widetilde{\boldsymbol{0}},
\end{align}
where $\widetilde{\boldsymbol{0}}=\{(x_1,\cdots,x_d)|x_i=0\text{ for some }i=1,\cdots,d\}$.
Then we have
\begin{align}\label{eqn:L2-int of h with eta}
\int_{\mathbb{R}^{d}}|x|^{-2\eta}|h(x)|^{2}dx<N<\infty,
\end{align}
where $N=N(C_{1},C_{f},\eta,\zeta,d)$.
\end{lemma}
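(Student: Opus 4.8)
The plan is to split the integral $\int_{\mathbb{R}^{d}}|x|^{-2\eta}|h(x)|^{2}\,dx$ into the region near the origin and the region away from the origin, and on each region use the pointwise bound \eqref{eqn:bddness of h} together with the decay/growth conditions \eqref{eqn: f condition 1} and \eqref{eqn: f condition 1-1} defining $\boldsymbol{F}_{\eta,\zeta}$. Writing $|h(x)|^{2}\le C_{1}^{2}|x|^{2\zeta}f(|x|)^{2}$, the integrand is bounded by $C_{1}^{2}|x|^{2(\zeta-\eta)}f(|x|)^{2}$, so after passing to polar coordinates the question reduces to the finiteness of a one-dimensional radial integral
\[
\int_{0}^{\infty} r^{2(\zeta-\eta)}f(r)^{2}\, r^{d-1}\,dr
= \int_{0}^{\infty} r^{2(\zeta-\eta)+d-1}f(r)^{2}\,dr,
\]
up to the constant $\sigma(\partial B_{1})C_{1}^{2}$.

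First I would handle the tail $r\ge T$: by \eqref{eqn: f condition 1-1} we have $f(r)\le C_{f}r^{-\alpha}$ with $\alpha>\zeta+d$, hence the integrand is dominated by $C_{f}^{2}r^{2(\zeta-\eta)+d-1-2\alpha}$; since $2(\zeta-\eta)+d-2\alpha < 2(\zeta-\eta)+d-2(\zeta+d) = -2\eta-d \le -d < 0$, the exponent $2(\zeta-\eta)+d-1-2\alpha<-1$, so the tail integral converges. Next I would handle the region near the origin, $0\le r<T$ (recall $T\ge 1$): by \eqref{eqn: f condition 1} (or the consolidated bound \eqref{eqn:PB of f}, $f(r)\le C_{f}r^{-\beta}$) the integrand is dominated by $C_{f}^{2}r^{2(\zeta-\eta)+d-1-2\beta}$, and by the defining inequality $\beta<\zeta-\eta+\frac{d}{2}$ we get $2(\zeta-\eta)+d-1-2\beta > -1$, so this integral over a bounded interval converges as well. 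Adding the two pieces gives a finite bound $N$ depending only on $C_{1},C_{f},\eta,\zeta,d$ (through $\sigma(\partial B_{1})$, $T$, $\alpha$, $\beta$, which are themselves controlled by $\zeta,\eta,d$ via the constraints on $\boldsymbol{F}_{\eta,\zeta}$), as claimed.

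One small technical point to address is that the hypothesis \eqref{eqn:bddness of h} holds only off the coordinate hyperplanes $\widetilde{\boldsymbol 0}$ rather than off the origin; but $\widetilde{\boldsymbol 0}$ has Lebesgue measure zero in $\mathbb{R}^{d}$, so it does not affect the value of the integral, and the polar-coordinates computation above is unaffected. I do not expect any genuine obstacle here: the whole point of the definition of $\boldsymbol{F}_{\eta,\zeta}$ — in particular the two-sided constraint $\max\{\zeta-\eta,0\}<\beta<\zeta-\eta+\frac{d}{2}$ and $\alpha>\zeta+d$ — is precisely engineered so that both the near-origin and the tail radial integrals converge, and the only care needed is bookkeeping of the exponents and verifying the strict inequalities go the right way.
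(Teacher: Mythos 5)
Your proof is correct and follows essentially the same route as the paper: pass to polar coordinates, split the radial integral at $T$, and use \eqref{eqn: f condition 1-1} for the tail. The only small deviation is near the origin, where the paper simply bounds $f(\rho)\le f(0)$ (using monotonicity, so the exponent constraint needed is $2\zeta-2\eta+d-1>-1$, i.e.\ $\zeta>\eta-\tfrac{d}{2}$), whereas you use the power bound $f(\rho)\le C_f\rho^{-\beta}$ together with $\beta<\zeta-\eta+\tfrac{d}{2}$; both are valid and reach the same conclusion.
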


\begin{proof}
By applying \eqref{eqn:bddness of h}, the change of variables to polar coordinates and applying the assumptions that $f$ is decreasing and \eqref{eqn: f condition 1-1},
we obtain that
\begin{align}
\int_{\mathbb{R}^{d}}|x|^{-2\eta}|h(x)|^2dx
&\leq C_{1}^2\int_{\mathbb{R}^{d}}|x|^{2\zeta-2\eta}f(|x|)^{2}dx
=C_{1}^2\sigma(\partial B_{1})\int_{0}^{\infty}\rho^{2\zeta-2\eta+d-1}f(\rho)^{2}d\rho\nonumber\\
&\leq C_{1}^2\sigma(\partial B_{1})f(0)^{2}\int_{0}^{T}\rho^{2\zeta-2\eta+d-1}d\rho\nonumber\\
&\qquad +\sigma(\partial B_{1})C_{1}^{2}C_{f}^2\int_{T}^{\infty}\rho^{2\zeta-2\eta+d-1}\rho^{-2\alpha}d\rho\nonumber\\
&<\infty.\label{eqn:L2-int of h}
\end{align}
\end{proof}

\begin{lemma}\label{lem: fractional laplacian is in L2}
Let $\eta\in [0,2)$ and let $h\in C^{2}(\mathbb{R}^{d}\setminus\widetilde{\boldsymbol{0}})$.
Suppose that there exist constants $C_{1}>0$, $\zeta>\eta-\frac{d}{2}$ and $f\in \boldsymbol{F}_{\eta,\zeta}$ such that
\begin{align}\label{eqn: condition 2}
|h(x)|&\leq C_{1} |x|^{\zeta}f(|x|)\quad \text{for all } x\in \mathbb{R}^{d}\setminus\widetilde{\boldsymbol{0}}.
\end{align}
Further we assume that
\begin{itemize}
  \item [\rm{(i)}] if $\eta\in (0,1)$, then there exists a constant $C_{2}>0$
  such that for all $x\in \mathbb{R}^{d}\setminus\widetilde{\boldsymbol{0}}$,
\begin{equation}\label{eqn: condition 3}
|\nabla h(x)|\leq C_{2}|x|^{\zeta-1}f(|x|),
\end{equation}
where the absolute value of the left hand sides of \eqref{eqn: condition 3} means the Euclidean norm,
  \item [\rm{(ii)}] if $\eta\in [1,2)$, then there exists a constant $C_{3}>0$
  such that for all $x\in \mathbb{R}^{d}\setminus\widetilde{\boldsymbol{0}}$,
\begin{equation}\label{eqn: condition 4}
|\nabla(\nabla h)(x)|\leq C_{3}|x|^{\zeta-2}f(|x|),
\end{equation}
where $\nabla(\nabla f):=\left[\frac{\partial^{2}f}{\partial x_{i} \partial x_{j}}\right]_{i,j=1}^{d}$.
\end{itemize}
Then we have
\begin{align}\label{eqn:L2-int of Delta-eta-h}
\int_{\mathbb{R}^{d}}|(-\Delta)^{\frac{\eta}{2}}h(x)|^{2}dx<N<\infty,
\end{align}
where $N=N(C_{1},C_{2},C_{3},C_{f},\eta,\zeta,\alpha_f,\beta_f,d)$.
\end{lemma}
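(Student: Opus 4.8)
The plan is to dispose of the value $\eta=0$ at once and to treat $0<\eta<2$ through the pointwise singular-integral representation of the fractional Laplacian. For $\eta=0$ we have $(-\Delta)^{0}h=h$, and \eqref{eqn:L2-int of Delta-eta-h} is then precisely the conclusion of Lemma \ref{lem: fractional laplacian is in L2-0}, whose hypothesis \eqref{eqn:bddness of h} is exactly \eqref{eqn: condition 2}. So assume $0<\eta<2$ and use the representations
\[
(-\Delta)^{\frac{\eta}{2}}h(x)=c_{d,\eta}\int_{\mathbb{R}^{d}}\frac{h(x)-h(x+y)}{|y|^{d+\eta}}\,dy\qquad(0<\eta<1),
\]
\[
(-\Delta)^{\frac{\eta}{2}}h(x)=c_{d,\eta}\int_{\mathbb{R}^{d}}\frac{2h(x)-h(x+y)-h(x-y)}{|y|^{d+\eta}}\,dy\qquad(1\le\eta<2),
\]
which for $x\notin\widetilde{\boldsymbol{0}}$ are absolutely convergent near $y=0$ because $h$ is $C^{2}$ near such $x$ and $\eta<2$. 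In each case I split the $y$-integral into the near region $\{|y|<\frac{|x|}{2}\}$ and the far region $\{|y|\ge\frac{|x|}{2}\}$, the aim being a pointwise bound $|(-\Delta)^{\eta/2}h(x)|\le C\Phi(x)$ with $\Phi$ a finite sum of square-integrable functions on $\mathbb{R}^{d}$.

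On the near region I Taylor expand about $x$ and use \eqref{eqn: x+ theta y lower bound}, which forces $|x\pm\theta y|\in[\tfrac{|x|}{2},\tfrac{3|x|}{2}]$ there. When $0<\eta<1$, $h(x)-h(x+y)=-\int_{0}^{1}\nabla h(x+\theta y)\cdot y\,d\theta$, so \eqref{eqn: condition 3} gives $|h(x)-h(x+y)|\le C_{2}|y|\int_{0}^{1}|x+\theta y|^{\zeta-1}f(|x+\theta y|)\,d\theta$; dividing by $|y|^{d+\eta}$, integrating over $\{|y|<\frac{|x|}{2}\}$, interchanging integrals and invoking Lemma \ref{lem: f-lemma 2} with $(\zeta-1,\eta-1)$ in place of $(\zeta,\delta)$ (note $\eta-1<0$) bounds the near part by $C|x|^{\zeta-\eta}f(\tfrac{|x|}{2})$. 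When $1\le\eta<2$, the integral form of Taylor's theorem together with \eqref{eqn: condition 4} gives $|2h(x)-h(x+y)-h(x-y)|\le C|y|^{2}\int_{0}^{1}\big(|x+\theta y|^{\zeta-2}f(|x+\theta y|)+|x-\theta y|^{\zeta-2}f(|x-\theta y|)\big)d\theta$, and the same computation, now with Lemma \ref{lem: f-lemma 2} applied with $(\zeta-2,\eta-2)$, again yields the near part bound $C|x|^{\zeta-\eta}f(\tfrac{|x|}{2})$.

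On the far region I bound the numerator by $|h(x)|+|h(x+y)|$ (resp. $2|h(x)|+|h(x+y)|+|h(x-y)|$). Since $\int_{|y|\ge|x|/2}|y|^{-d-\eta}dy=c|x|^{-\eta}$, the $h(x)$-terms contribute at most $C|h(x)||x|^{-\eta}\le C|x|^{\zeta-\eta}f(|x|)$ by \eqref{eqn: condition 2}. For the $h(x\pm y)$-terms, by symmetry it suffices to estimate $I(x)=\int_{|y|\ge|x|/2}|y|^{-d-\eta}|h(x+y)|\,dy$; using \eqref{eqn: condition 2} and the scaling $y=|x|z$, $w=x/|x|$, one gets $I(x)\le C|x|^{\zeta-\eta}\int_{|z|\ge1/2}|z|^{-d-\eta}|w+z|^{\zeta}f(|x|\,|w+z|)\,dz$. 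If $|x|\le 4T$ I insert $f(t)\le C_{f}t^{-\beta}$ from \eqref{eqn:PB of f}, reducing the $z$-integral to $C\int_{|z|\ge1/2}|z|^{-d-\eta}|w+z|^{\zeta-\beta}dz$, finite by \eqref{eqn: integral 1} since $\zeta-\beta<\eta$ (as $\beta>\zeta-\eta$), so $I(x)\le C|x|^{\zeta-\eta-\beta}$. If $|x|\ge 4T$ I split the $z$-integral at $|w+z|=T/|x|$: on $\{|w+z|\ge T/|x|\}$ one has $|x|\,|w+z|\ge T$, so \eqref{eqn: f condition 1-1} gives $f\le C_{f}(|x|\,|w+z|)^{-\alpha}$ and the remaining radial integral is controlled using $\alpha>\zeta+d$; on the small cap $\{|w+z|<T/|x|\}$ one uses $f$ decreasing (or \eqref{eqn: f condition 1}) and a radial change of variables; in both cases one obtains $I(x)\le C|x|^{-\eta-d}$. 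Thus $|(-\Delta)^{\eta/2}h(x)|$ is dominated by a sum of the terms $|x|^{\zeta-\eta}f(\tfrac{|x|}{2})$, $|x|^{\zeta-\eta}f(|x|)$, $|x|^{\zeta-\eta-\beta}\mathbf{1}_{\{|x|\le 4T\}}$ and $|x|^{-\eta-d}\mathbf{1}_{\{|x|\ge 4T\}}$; squaring and integrating each over $\mathbb{R}^{d}$, after splitting at $|x|=T$ and arguing as in \eqref{eqn:L2-int of h}, is finite precisely because $\zeta>\eta-\tfrac{d}{2}$, $\max\{\zeta-\eta,0\}<\beta<\zeta-\eta+\tfrac{d}{2}$ and $\alpha>\zeta+d$. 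This gives \eqref{eqn:L2-int of Delta-eta-h}.

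I expect the main obstacle to be the far-region term $I(x)$: any single global bound on $f$ there produces a function that fails to be square integrable at one of the two ends, so one must interpolate between the two decay rates of $f\in\boldsymbol{F}_{\eta,\zeta}$ according to the sizes of $|x|$ and $|x+y|$, which is exactly why the chained inequalities $\max\{\zeta-\eta,0\}<\beta<\zeta-\eta+\tfrac{d}{2}<\zeta+d<\alpha$ built into $\boldsymbol{F}_{\eta,\zeta}$ are imposed. A secondary technical point is that $h$ is only $C^{2}$ off the null set $\widetilde{\boldsymbol{0}}$ and the bounds \eqref{eqn: condition 2}--\eqref{eqn: condition 4} are given only there; this is handled just as in Lemmas \ref{lem: f-lemma 2} and \ref{lem: fractional laplacian is in L2-0}, where it suffices that the pointwise estimates hold off a Lebesgue-null set.
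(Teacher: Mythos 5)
Your proof is correct and follows essentially the same architecture as the paper's: dispose of $\eta=0$ via Lemma \ref{lem: fractional laplacian is in L2-0}, represent $(-\Delta)^{\eta/2}$ by a singular integral, split at $|y|=|x|/2$, treat the near region by Taylor expansion and Lemma \ref{lem: f-lemma 2}, and treat the far region by playing the polynomial bound \eqref{eqn:PB of f} against the tail decay \eqref{eqn: f condition 1-1} of $f$. The only cosmetic differences are that the paper bounds the far-region integral $\mathcal{I}(x)$ for large $|x|$ directly by pulling $|y|^{-(d+\eta)}\leq 2^{d+\eta}|x|^{-(d+\eta)}$ out and integrating $|y|^{\zeta}f(|y|)$ over all of $\mathbb{R}^{d}$ (rather than scaling by $y=|x|z$ and further splitting the $z$-integral at $|w+z|=T/|x|$ as you do), and that for $1\le\eta<2$ the paper first decomposes via the unsymmetrized kernel and only then symmetrizes the near-region piece $\mathcal{J}$, whereas you adopt the symmetrized representation globally.
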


\begin{proof}
The proof is a simple modification of the proof of Lemma 5.1 in \cite{I. Kim K.-H. Kim 2015}.
However, for the reader's convenience, we will give the full proof.
The case of $\eta=0$ is clear from Lemma \ref{lem: fractional laplacian is in L2-0}.
We now consider the case $\eta\in (0,2)$. Let $\partial B_{1}$ and $d\sigma$ be the unit sphere and the surface measure on $\partial B_{1}$, respectively.

(i) \enspace
We consider the fractional power $(-\Delta)^{\frac{\eta}{2}}$ of the Laplacian
as defined by the principal value integral (with $C=C(\eta)>0$):
\begin{align*}
-(-\Delta)^{\frac{\eta}{2}}h(x)
&=C\lim_{\epsilon\rightarrow 0}\int_{|y|\geq \epsilon}\frac{h(x+y)-h(x)}{|y|^{d+\eta}}dy
=C\int_{\mathbb{R}^{d}}\frac{h(x+y)-h(x)}{|y|^{d+\eta}}dy.
\end{align*}
Then for any $x\in \mathbb{R}^{d}$, we obtain that
\begin{align*}
\left|(-\Delta)^{\frac{\eta}{2}}h(x)\right|
&\le C\int_{|y|\ge \frac{|x|}{2}}\frac{|h(x+y)-h(x)|}{|y|^{d+\eta}}dy
  +C\int_{|y|<\frac{|x|}{2}}\frac{|h(x+y)-h(x)|}{|y|^{d+\eta}}dy\\
&\le \mathcal{I}(x)+C|h(x)|\int_{|y|\ge \frac{|x|}{2}}\frac{1}{|y|^{d+\eta}}dy+\mathcal{J}(x)\\
&\le C|\mathcal{I}(x)|+C|x|^{-\eta}|h(x)|\int_{|y|\ge \frac{1}{2}}\frac{1}{|y|^{d+\eta}}dy+C|\mathcal{J}(x)|,
\end{align*}
where
\begin{align*}
\mathcal{I}(x)=\int_{|y|\geq \frac{|x|}{2}}\frac{h(x+y)}{|y|^{d+\eta}}dy,\quad
\mathcal{J}(x)=\int_{|y|<\frac{|x|}{2}}\frac{h(x+y)-h(x)}{|y|^{d+\eta}}dy.
\end{align*}
Therefore, by applying \eqref{eqn: integral 1} and \eqref{eqn:L2-int of h with eta},
it is enough to prove that
\begin{align*}
I&=\int_{|x|<1}|\mathcal{I}(x)|^{2}dx+\int_{|x|\geq 1}|\mathcal{I}(x)|^{2}dx=:I_{1}+I_{2}<\infty,\\
J&=\int_{\mathbb{R}^{d}}|\mathcal{J}(x)|^{2}dx<\infty.
\end{align*}
For $I_{1}<\infty$, by applying \eqref{eqn: condition 2} and \eqref{eqn:PB of f},
and changing variables by $y=|x|z$ and applying \eqref{eqn: integral 1} with $\zeta-\beta<\eta$, we obtain that
\begin{align*}
|\mathcal{I}(x)|
&\leq C_{1}\int_{|y|\geq \frac{|x|}{2}}\frac{|x+y|^{\zeta}f(|x+y|)}{|y|^{d+\eta}}dy\nonumber\\
&\leq C_{1}C_{f}\int_{|y|\geq \frac{|x|}{2}}\frac{|x+y|^{\zeta-\beta}}{|y|^{d+\eta}}dy\nonumber\\
&=C_{1}C_{f}|x|^{-\eta}\int_{|z|\geq \frac{1}{2}}\frac{|x+|x|z|^{\zeta-\beta}}{|z|^{d+\eta}}dz\nonumber\\
&= C_{1}C_{f}|x|^{\zeta-\beta-\eta}\int_{|z|\geq \frac{1}{2}}\frac{|x/|x|+z|^{\zeta-\beta}}{|z|^{d+\eta}}dz\nonumber\\
&\leq C'|x|^{\zeta-\beta-\eta},\quad
   C'=C_{1}C_{f}\sup_{|w|=1}\int_{|z|\geq \frac{1}{2}}\frac{|w+z|^{\zeta-\beta}}{|z|^{d+\eta}}dz<\infty.
\end{align*}
Therefore, by the condition that $\beta<\zeta-\eta+\frac{d}{2}$, we have $2\zeta-2\beta-2\eta+d>0$ and then
\begin{align}\label{eqn: estimate 1}
I_{1}&=\int_{|x|< 1}|\mathcal{I}(x)|^{2}dx
\leq C'^{2}\int_{|x|< 1}|x|^{2\zeta-2\beta-2\eta}dx<\infty.
\end{align}
For $I_{2}<\infty$, by applying \eqref{eqn: condition 2}, we obtain that
\begin{align*}
|\mathcal{I}(x)|
&\leq C_{1}\int_{|y|\geq \frac{|x|}{2}}\frac{|x+y|^{\zeta}f(|x+y|)}{|y|^{d+\eta}}dy\\
&\le 2^{d+\eta}C_{1}|x|^{-(d+\eta)}\int_{|y|\geq \frac{|x|}{2}} |x+y|^{\zeta}f(|x+y|)dy\nonumber\\
&\leq D_{1}|x|^{-(d+\eta)},
\end{align*}
where since $\alpha>\zeta+d$, by applying the fact that $f$ is decreasing and \eqref{eqn: f condition 1-1}, we obtain that
\begin{align*}
D_{1}&=2^{d+\eta}C_{1}\int_{\mathbb{R}^{d}}|y|^{\zeta}f(|y|)dy\\
&\leq 2^{d+\eta}C_{1}\sigma(\partial B_{1})
     \left(f(0)\int_{0}^{T}\rho^{\zeta+d-1}d\rho+C_{f}\int_{T}^{\infty}\rho^{\zeta+d-1}\rho^{-\alpha}d\rho\right)\nonumber\\
&<\infty.
\end{align*}
Therefore, we have
\begin{align}\label{eqn: estimate 2}
I_{2}&=\int_{|x|\geq  1}|\mathcal{I}(x)|^{2}dx
 \leq D_{1}^{2}\int_{|x|\geq 1}|x|^{-2(d+\eta)}dx
<\infty.
\end{align}
For $J<\infty$, by using Taylor's theorem, \eqref{eqn: condition 3} and Lemma \ref{lem: f-lemma 2} with $\delta=\eta-1$, for some $\theta\in [0,1]$, we obtain that
\begin{align*}
|\mathcal{J}(x)|
&\leq \int_{|y|<\frac{|x|}{2}}|\nabla h(x+\theta y)|\frac{1}{|y|^{d-1+\eta}}dy\nonumber\\
&\leq C_{2}\int_{|y|<\frac{|x|}{2}}|x+\theta y|^{\zeta-1}f(|x+\theta y|)\frac{1}{|y|^{d-1+\eta}}dy\nonumber\\
&\leq C'|x|^{(\zeta-1)-(\eta-1)}f\left(\frac{|x|}{2}\right)\\
&=C'|x|^{\zeta-\eta}f\left(\frac{|x|}{2}\right).
\end{align*}
By the similar estimations used in \eqref{eqn:L2-int of h}, we have
\begin{align}\label{eqn: estimate 4}
J=\int_{\mathbb{R}^{d}}|\mathcal{J}(x)|^{2}dx
&\leq (C')^2\int_{\mathbb{R}^{d}}|x|^{2\zeta-2\eta}f\left(\frac{|x|}{2}\right)^{2}dx<\infty.
\end{align}
Therefore, by \eqref{eqn: estimate 1}, \eqref{eqn: estimate 2} and \eqref{eqn: estimate 4}, we obtain
\begin{align*}
\int_{\mathbb{R}^{d}}|(-\Delta)^{\frac{\eta}{2}}h(x)|^{2}dx<\infty.
\end{align*}

(ii) \enspace
Suppose \eqref{eqn: condition 4} holds.
Then in the proof of (i), we proved that
\begin{align*}
\int_{\mathbb{R}^{d}}|(-\Delta)^{\frac{\eta}{2}}h(x)|^{2}dx
\le D+3C^{2}\left(I_{1}+I_{2}+J\right)
\end{align*}
for some constant $D$, where $I_{1}$ and $I_{2}$ are independent of the choice of $\eta$.
Therefore, it is enough to prove that $J<\infty$ for the case of $\eta\in [1,2)$.
Note that by the change of variables, we have
\begin{align*}
\mathcal{J}(x)=\int_{\frac{|x|}{2}>|y|}\frac{h(x+y)-h(x)}{|y|^{d+\eta}}dy
=\int_{\frac{|x|}{2}>|y|}\frac{h(x-y)-h(x)}{|y|^{d+\eta}}dy,
\end{align*}
and so we have
\[
2\mathcal{J}(x)=\mathcal{J}(x)+\mathcal{J}(x)
=\int_{\frac{|x|}{2}>|y|}\frac{h(x+y)+h(x-y)-2h(x)}{|y|^{d+\eta}}dy.
\]
 In order to estimate $\mathcal{J}$, we use the second order Taylor's theorem.
 Then by applying \eqref{eqn: condition 4} and Lemma \ref{lem: f-lemma 2} with $\delta=\eta-2$, for some $\theta\in [0,1]$, we obtain that
\begin{align}\label{eqn: estimate 4 second-1}
|2\mathcal{J}(x)|
&\leq \int_{|y|<\frac{|x|}{2}}|\nabla(\nabla h)(x+\theta y)|\frac{1}{|y|^{d-2+\eta}}dy\nonumber\\
&\leq C_{3}\int_{|y|<\frac{|x|}{2}}|x+\theta y|^{\zeta-2}f(|x+\theta y|)\frac{1}{|y|^{d-2+\eta}}dy\nonumber\\
&\leq C'|x|^{(\zeta-2)-(\eta-2)}f\left(\frac{|x|}{2}\right)\nonumber\\
&=C'|x|^{\zeta-\eta}f\left(\frac{|x|}{2}\right).
\end{align}
Then by the same arguments used in \eqref{eqn:L2-int of h}, we have
\begin{equation}\label{eqn: esti J eta in (1,2)}
\int_{\mathbb{R}^{d}}|\mathcal{J}(x)|^2 dx
\leq \frac{1}{4}(C')^2\int_{\mathbb{R}^{d}}|x|^{2\zeta-2\eta}f\left(\frac{|x|}{2}\right)^2 dx<\infty.
\end{equation}
Using \eqref{eqn: estimate 1}, \eqref{eqn: estimate 2} and  \eqref{eqn: esti J eta in (1,2)},  we have
\[
\int_{\mathbb{R}^{d}}|(-\Delta)^{\frac{\eta}{2}}h(x)|^{2}dx<\infty.
\]
The proof is now complete.
\end{proof}

The following Lemma is a generalization of Lemma 5.1 in \cite{I. Kim S. Lim K.-H. Kim 2016}.

\begin{lemma}\label{lem: fractional laplacian is in L2 eta geq 2}
Let $\eta\geq 0$ and let
$h\in C^{1+\lfloor\eta\rfloor}(\mathbb{R}^{d}\setminus\widetilde{\boldsymbol{0}})$,
where $\lfloor\eta\rfloor$ is the greatest integer less than or equal to $\eta$.
Suppose that there exist constants $C_{1}>0,\zeta>\eta-\frac{d}{2}$  and $f\in\boldsymbol{F}_{\eta,\zeta}$ such that
\begin{align}
|(-\Delta)^{\lfloor\frac{\eta}{2}\rfloor}h(x)|&\leq C_{1} |x|^{\zeta-2\lfloor\frac{\eta}{2}\rfloor}f(|x|)\quad \text{for all}\quad x\in \mathbb{R}^{d}\setminus\widetilde{\boldsymbol{0}}\label{eqn: condition 2-2}.
\end{align}
Further we assume that
\begin{itemize}
  \item [\rm{(i)}] if $\eta-2\lfloor\frac{\eta}{2}\rfloor\in (0,1)$, then there exists a constant $C_{2}>0$ such that for all $x\in \mathbb{R}^{d}\setminus\widetilde{\boldsymbol{0}}$,
  \begin{equation}\label{eqn: condition 3-2}
|\nabla (-\Delta)^{\lfloor\frac{\eta}{2}\rfloor} h(x)|\leq C_{2}|x|^{\zeta-2\lfloor\frac{\eta}{2}\rfloor-1}f(|x|),
\end{equation}
  \item [\rm{(ii)}] if $\eta-2\lfloor\frac{\eta}{2}\rfloor\in [1,2)$, then there exists a constant $C_{3}>0$ such that for all $x\in \mathbb{R}^{d}\setminus\widetilde{\boldsymbol{0}}$,
  \begin{equation}\label{eqn: condition 4-2}
|\nabla(\nabla (-\Delta)^{\lfloor\frac{\eta}{2}\rfloor}h)(x)|\leq C_{3}|x|^{\zeta-2\lfloor\frac{\eta}{2}\rfloor-2}f(|x|),
\end{equation}
where $\nabla(\nabla f):=\left[\frac{\partial^{2}f}{\partial x_{i} \partial x_{j}}\right]_{i,j=1}^{d}$.
\end{itemize}
Then we have
\[
\int_{\mathbb{R}^{d}}|(-\Delta)^{\frac{\eta}{2}}h(x)|^{2}dx<N<\infty,
\]
where $N=N(C_{1},C_{2},C_{3},\eta,\beta,d)$.
\end{lemma}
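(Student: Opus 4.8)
The plan is to peel off an even power of the Laplacian and invoke Lemma~\ref{lem: fractional laplacian is in L2} (and, in the degenerate case, Lemma~\ref{lem: fractional laplacian is in L2-0}). Write $m:=\lfloor\frac{\eta}{2}\rfloor$, $\widetilde\eta:=\eta-2m\in[0,2)$ and $\widetilde\zeta:=\zeta-2m$, and set $g:=(-\Delta)^{m}h$. On the Fourier transform side $|\xi|^{\eta}=|\xi|^{\widetilde\eta}\,|\xi|^{2m}$, so that $(-\Delta)^{\frac{\eta}{2}}h=(-\Delta)^{\frac{\widetilde\eta}{2}}g$; hence it suffices to show $\int_{\mathbb{R}^{d}}|(-\Delta)^{\frac{\widetilde\eta}{2}}g(x)|^{2}dx<\infty$, which we obtain by applying the earlier lemmas to $g$ with the indices $\widetilde\eta,\widetilde\zeta$ in place of $\eta,\zeta$.

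Before that I would record the bookkeeping that makes this substitution legitimate. Since $\widetilde\zeta-\widetilde\eta=\zeta-\eta$, the standing hypothesis $\zeta>\eta-\frac{d}{2}$ gives $\widetilde\zeta>\widetilde\eta-\frac{d}{2}$; moreover $\boldsymbol{F}_{\eta,\zeta}\subseteq\boldsymbol{F}_{\widetilde\eta,\widetilde\zeta}$, because the admissible range $\max\{\widetilde\zeta-\widetilde\eta,0\}<\beta_{f}<\widetilde\zeta-\widetilde\eta+\frac{d}{2}$ for $\beta_{f}$ is unchanged, while the requirement $\alpha_{f}>\widetilde\zeta+d$ is only weaker than $\alpha_{f}>\zeta+d$ since $\widetilde\zeta=\zeta-2m\le\zeta$; in particular $f\in\boldsymbol{F}_{\widetilde\eta,\widetilde\zeta}$. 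For the regularity, applying $(-\Delta)^{m}$ consumes $2m$ derivatives and $1+\lfloor\eta\rfloor=1+2m+\lfloor\widetilde\eta\rfloor$, so $h\in C^{1+\lfloor\eta\rfloor}(\mathbb{R}^{d}\setminus\widetilde{\boldsymbol{0}})$ yields $g\in C^{1+\lfloor\widetilde\eta\rfloor}(\mathbb{R}^{d}\setminus\widetilde{\boldsymbol{0}})$, i.e. $g\in C^{1}$ when $\widetilde\eta\in(0,1)$ and $g\in C^{2}$ when $\widetilde\eta\in[1,2)$.

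It then remains to observe that the hypotheses of Lemma~\ref{lem: fractional laplacian is in L2} translate verbatim for $g$: \eqref{eqn: condition 2-2} is precisely $|g(x)|\le C_{1}|x|^{\widetilde\zeta}f(|x|)$; when $\widetilde\eta\in(0,1)$, \eqref{eqn: condition 3-2} is precisely $|\nabla g(x)|\le C_{2}|x|^{\widetilde\zeta-1}f(|x|)$; and when $\widetilde\eta\in[1,2)$, \eqref{eqn: condition 4-2} is precisely $|\nabla(\nabla g)(x)|\le C_{3}|x|^{\widetilde\zeta-2}f(|x|)$. Thus, if $\widetilde\eta=0$ (that is, $\eta$ an even integer) Lemma~\ref{lem: fractional laplacian is in L2-0} gives $\int_{\mathbb{R}^{d}}|g(x)|^{2}dx<\infty$, while if $\widetilde\eta\in(0,2)$ Lemma~\ref{lem: fractional laplacian is in L2} gives $\int_{\mathbb{R}^{d}}|(-\Delta)^{\frac{\widetilde\eta}{2}}g(x)|^{2}dx<\infty$; combining this with $(-\Delta)^{\frac{\eta}{2}}h=(-\Delta)^{\frac{\widetilde\eta}{2}}g$ completes the proof.

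The main point requiring care is the composition identity $(-\Delta)^{\frac{\eta}{2}}h=(-\Delta)^{\frac{\widetilde\eta}{2}}(-\Delta)^{m}h$: it is immediate on the Fourier side, but since the proof of Lemma~\ref{lem: fractional laplacian is in L2} manipulates the principal-value form of $(-\Delta)^{\frac{\widetilde\eta}{2}}$, one should check that $g$, together with the pointwise bounds on $g$, $\nabla g$ and $\nabla(\nabla g)$ furnished above, lies in a class where the two descriptions of $(-\Delta)^{\frac{\widetilde\eta}{2}}g$ coincide, which is exactly what the hypotheses provide. One minor slack is that when $\widetilde\eta\in(0,1)$ we only obtain $g\in C^{1}$, whereas Lemma~\ref{lem: fractional laplacian is in L2} is phrased for $C^{2}$ functions; but part (i) of its proof uses only the first-order Taylor expansion, so $C^{1}$ is enough there, and this is precisely why $1+\lfloor\eta\rfloor$, rather than $2+\lfloor\eta\rfloor$, is the natural regularity hypothesis.
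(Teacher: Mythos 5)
Your argument is correct and is the natural reduction: writing $m=\lfloor\eta/2\rfloor$, $\widetilde\eta=\eta-2m$, $\widetilde\zeta=\zeta-2m$, $g=(-\Delta)^{m}h$, the hypotheses on $h$ become exactly the hypotheses of Lemma~\ref{lem: fractional laplacian is in L2-0} (if $\widetilde\eta=0$) or Lemma~\ref{lem: fractional laplacian is in L2} (if $\widetilde\eta\in(0,2)$) applied to $g$ with indices $(\widetilde\eta,\widetilde\zeta)$, and your bookkeeping --- in particular that $\widetilde\zeta-\widetilde\eta=\zeta-\eta$ so $\boldsymbol{F}_{\eta,\zeta}\subseteq\boldsymbol{F}_{\widetilde\eta,\widetilde\zeta}$, the regularity count $1+\lfloor\eta\rfloor=1+2m+\lfloor\widetilde\eta\rfloor$, and the observation that case (i) of Lemma~\ref{lem: fractional laplacian is in L2} only needs $C^{1}$ --- is accurate. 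The paper does not write out a proof but merely cites Lemma 5.1 of \cite{I. Kim S. Lim K.-H. Kim 2016}; your proof supplies precisely the expected details of that reduction.
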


\begin{proof}
The proof is exactly the same as the proof of Lemma 5.1 in \cite{I. Kim S. Lim K.-H. Kim 2016}.
\end{proof}

\begin{theorem}\label{thm:ph is in L1}
Let $\eta> \frac{d}{2}$ and let
$h\in C^{1+\lfloor\eta\rfloor}(\mathbb{R}^{d}\setminus\widetilde{\boldsymbol{0}})$.
Suppose that there exist constants $C_{1}>0,\zeta>\eta-\frac{d}{2}$  and $f\in\boldsymbol{F}_{\eta,\zeta}$ such that
\begin{align}
|(-\Delta)^{\lfloor\frac{\eta}{2}\rfloor}h(x)|&
\leq C_{1} |x|^{\zeta-2\lfloor\frac{\eta}{2}\rfloor}f(|x|)\quad \text{for all}\quad x\in \mathbb{R}^{d}\setminus\widetilde{\boldsymbol{0}}\label{eqn: condition 2-3}.
\end{align}
Further we assume that
\begin{itemize}
  \item [\rm{(i)}] if $\eta-2\lfloor\frac{\eta}{2}\rfloor\in (0,1)$, then there exists a constant $C_{2}>0$ such that for all $x\in \mathbb{R}^{d}\setminus\widetilde{\boldsymbol{0}}$,
  \begin{equation}\label{eqn: condition 3-3}
|\nabla (-\Delta)^{\lfloor\frac{\eta}{2}\rfloor} h(x)|\leq C_{2}|x|^{\zeta-2\lfloor\frac{\eta}{2}\rfloor-1}f(|x|),
\end{equation}
  \item [\rm{(ii)}] if $\eta-2\lfloor\frac{\eta}{2}\rfloor\in [1,2)$, then there exists a constant $C_{3}>0$ such that for all $x\in \mathbb{R}^{d}\setminus\widetilde{\boldsymbol{0}}$,
  \begin{equation}\label{eqn: condition 4-3}
|\nabla(\nabla (-\Delta)^{\lfloor\frac{\eta}{2}\rfloor}h)(x)|\leq C_{3}|x|^{\zeta-2\lfloor\frac{\eta}{2}\rfloor-2}f(|x|),
\end{equation}
where $\nabla(\nabla f):=\left[\frac{\partial^{2}f}{\partial x_{i} \partial x_{j}}\right]_{i,j=1}^{d}$.
\end{itemize}
If $h\in L^{2}(\mathbb{R}^{d})$, then the following function
\[
p_{h}(x):=\mathcal{F}^{-1}(h(\xi))(x).
\]
is integrable on $\mathbb{R}^{d}$.
\end{theorem}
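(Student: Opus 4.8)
The plan is to prove that $p_{h}\in L^{1}(\mathbb{R}^{d})$ by splitting $\mathbb{R}^{d}$ into the unit ball and its complement and applying the Cauchy--Schwarz inequality on each piece, the crucial input on $\{|x|\ge 1\}$ being the $L^{2}$-bound for $(-\Delta)^{\eta/2}h$ furnished by Lemma \ref{lem: fractional laplacian is in L2 eta geq 2}.

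Since $h\in L^{2}(\mathbb{R}^{d})$, Plancherel's theorem gives $p_{h}=\mathcal{F}^{-1}(h)\in L^{2}(\mathbb{R}^{d})$ with $\|p_{h}\|_{L^{2}}=\|h\|_{L^{2}}$. As the ball $B_{1}=\{|x|<1\}$ has finite Lebesgue measure, Cauchy--Schwarz yields
\[
\int_{|x|<1}|p_{h}(x)|\,dx\le |B_{1}|^{1/2}\,\|p_{h}\|_{L^{2}}<\infty .
\]
On $\{|x|\ge 1\}$ I would write $|p_{h}(x)|=|x|^{-\eta}\bigl(|x|^{\eta}|p_{h}(x)|\bigr)$ and apply Cauchy--Schwarz to obtain
\[
\int_{|x|\ge 1}|p_{h}(x)|\,dx
\le \Bigl(\int_{|x|\ge 1}|x|^{-2\eta}\,dx\Bigr)^{1/2}
\Bigl(\int_{|x|\ge 1}|x|^{2\eta}|p_{h}(x)|^{2}\,dx\Bigr)^{1/2}.
\]
The first factor is finite because the hypothesis $\eta>\tfrac{d}{2}$ gives $2\eta>d$, so $\int_{|x|\ge 1}|x|^{-2\eta}\,dx=\sigma(\partial B_{1})\int_{1}^{\infty}\rho^{d-1-2\eta}\,d\rho<\infty$.

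For the second factor I would use that, up to a constant $c=c(d,\eta)$, multiplication by $|x|^{\eta}$ on the spatial side corresponds to the action of $(-\Delta)^{\eta/2}$ on the Fourier side: for every Schwartz function $\phi$ one has $|x|^{\eta}\mathcal{F}\phi(x)=\mathcal{F}\bigl((-\Delta)^{\eta/2}\phi\bigr)(x)$ (up to that same constant), so testing against $\phi$ and using $\mathcal{F}p_{h}=\mathcal{F}\mathcal{F}^{-1}h=h$ gives, as tempered distributions,
\[
\mathcal{F}\bigl(|\cdot|^{\eta}p_{h}\bigr)=c\,(-\Delta)^{\eta/2}h ,
\]
where the reflection $\xi\mapsto-\xi$ distinguishing $\mathcal{F}$ from $\mathcal{F}^{-1}$ is irrelevant since the weight $|x|^{\eta}$ is even. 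By Plancherel's theorem this identity gives
\[
\int_{\mathbb{R}^{d}}|x|^{2\eta}|p_{h}(x)|^{2}\,dx
=\bigl\||\cdot|^{\eta}p_{h}\bigr\|_{L^{2}}^{2}
=c^{2}\,\bigl\|(-\Delta)^{\eta/2}h\bigr\|_{L^{2}}^{2}.
\]
The standing assumptions on $h$ in this theorem --- namely $h\in C^{1+\lfloor\eta\rfloor}(\mathbb{R}^{d}\setminus\widetilde{\boldsymbol{0}})$, the bound \eqref{eqn: condition 2-3} on $(-\Delta)^{\lfloor\eta/2\rfloor}h$, and the gradient conditions \eqref{eqn: condition 3-3} and \eqref{eqn: condition 4-3} --- are exactly the hypotheses of Lemma \ref{lem: fractional laplacian is in L2 eta geq 2} for the same $\zeta$ and $f\in\boldsymbol{F}_{\eta,\zeta}$, so that lemma yields $\|(-\Delta)^{\eta/2}h\|_{L^{2}}^{2}<\infty$. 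Combining this with the finiteness of $\int_{|x|\ge 1}|x|^{-2\eta}\,dx$ shows $\int_{|x|\ge 1}|p_{h}(x)|\,dx<\infty$, and together with the estimate on $B_{1}$ we conclude $p_{h}\in L^{1}(\mathbb{R}^{d})$.

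I expect the only genuinely delicate point to be the rigorous justification of the identity $\mathcal{F}(|\cdot|^{\eta}p_{h})=c\,(-\Delta)^{\eta/2}h$ in the distributional sense, including the verification that $(-\Delta)^{\eta/2}h$ there is the same object --- defined through the principal-value integral, as in the proof of Lemma \ref{lem: fractional laplacian is in L2} --- to which Lemma \ref{lem: fractional laplacian is in L2 eta geq 2} applies; everything else is a routine application of Plancherel's theorem and Cauchy--Schwarz. A clean way to package this is to set $g:=(-\Delta)^{\eta/2}h$, note $g\in L^{2}(\mathbb{R}^{d})$ by Lemma \ref{lem: fractional laplacian is in L2 eta geq 2}, and check directly (by pairing with Schwartz functions) that $\mathcal{F}^{-1}(g)(x)=c\,|x|^{\eta}p_{h}(x)$ for a.e.\ $x$, which gives $|x|^{\eta}p_{h}\in L^{2}(\mathbb{R}^{d})$ immediately.
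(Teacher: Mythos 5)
Your proof is correct and takes essentially the same approach as the paper's: both rely on Cauchy--Schwarz to pass from $L^1$ to $L^2$, Plancherel to trade the spatial weight $|x|^\eta$ for $(-\Delta)^{\eta/2}$ on the Fourier side, and Lemma~\ref{lem: fractional laplacian is in L2 eta geq 2} to control $\|(-\Delta)^{\eta/2}h\|_{L^2}$. The only difference is cosmetic: the paper multiplies and divides by the single global weight $(1+|x|^\eta)$ (so that the Fourier-side object is $h+(-\Delta)^{\eta/2}h$, handled by the triangle inequality), while you split into $\{|x|<1\}$ and $\{|x|\ge 1\}$ and treat the two pieces separately.
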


\begin{proof}
The proof is a simple modification of the argument used in the proof of Lemma 5.2 in \cite{I. Kim S. Lim K.-H. Kim 2016}.
In fact, since $\eta>\frac{d}{2}$, we have
\begin{align*}
A_{d}:=\int_{\mathbb{R}^{d}}\frac{1}{(1+|y|^{\eta})^{2}}dy<\infty.
\end{align*}
Then by the Cauchy-Schwarz inequality, Plancherel theorem, triangle inequality,
and Lemma \ref{lem: fractional laplacian is in L2 eta geq 2}, we obtain that
\begin{align}
\|p_{h}\|_{L^{1}(\mathbb{R}^{d})}
&=\int_{\mathbb{R}^{d}}|\mathcal{F}^{-1}\left(h(\xi)\right)(x)|dx\nonumber\\
&=\int_{\mathbb{R}^{d}}|(1+|x|^{\eta})^{-1}(1+|x|^{\eta})\mathcal{F}^{-1}\left(h(\xi)\right)(x)|dx\nonumber\\
&\leq \left(\int_{\mathbb{R}^{d}}(1+|x|^{\eta})^{-2}dx\right)^{\frac{1}{2}}
  \left(\int_{\mathbb{R}^{d}}\left|(1+|x|^{\eta})\mathcal{F}^{-1}\left(h(\xi)\right)(x)\right|^{2}dx\right)^{\frac{1}{2}}\nonumber\\
&= A_{d}^{\frac{1}{2}}\left(\int_{\mathbb{R}^{d}}
    \left|h(\xi)+(-\Delta_{\xi})^{\frac{\eta}{2}}h(\xi)\right|^{2}d\xi\right)^{\frac{1}{2}}\nonumber\\
&\leq A_{d}^{\frac{1}{2}}\left(\|h\|_{L^{2}(\mathbb{R}^{d})}+\|(-\Delta_{\xi})^{\frac{\eta}{2}}h\|_{L^{2}(\mathbb{R}^{d})}\right)\nonumber\\
&<\infty.\label{eqn: L1norm of ph}
\end{align}
The proof is complete.
\end{proof}

The following corollary will be used for the proof of Lemma \ref{lem: boundedness of L psi of p j}.

\begin{corollary}\label{cor: integrability bounded interval}
Let $\eta\geq 0$ and let
$h\in C^{1+\lfloor\eta\rfloor}(\mathbb{R}^{d}\setminus\widetilde{\boldsymbol{0}})$. Let $0\leq a< b<\infty$.
Suppose that there exist constants $C_{1}>0,\zeta>\eta-\frac{d}{2}$  and $f\in\boldsymbol{F}_{\eta,\zeta}$ such that
\begin{align}
|(-\Delta)^{\lfloor\frac{\eta}{2}\rfloor}h(x)|&\leq C_{1} |x|^{\zeta-2\lfloor\frac{\eta}{2}\rfloor}f(|x|)1_{(a,b)}(|x|)\quad \text{for all}\quad x\in \mathbb{R}^{d}\setminus\widetilde{\boldsymbol{0}}\label{eqn: condition 2-2-1}.
\end{align}
Further, we assume that
\begin{itemize}
  \item [\rm{(i)}] if $\eta-2\lfloor\frac{\eta}{2}\rfloor\in (0,1)$, then there exists a constant $C_{2}>0$ such that for all $x\in \mathbb{R}^{d}\setminus\widetilde{\boldsymbol{0}}$,
  \begin{equation}\label{eqn: condition 3-2-1}
|\nabla (-\Delta)^{\lfloor\frac{\eta}{2}\rfloor} h(x)|\leq C_{2}|x|^{\zeta-2\lfloor\frac{\eta}{2}\rfloor-1}f(|x|)1_{(a,b)}(|x|),
\end{equation}
  \item [\rm{(ii)}] if $\eta-2\lfloor\frac{\eta}{2}\rfloor\in [1,2)$, then there exists a constant $C_{3}>0$ such that for all $x\in \mathbb{R}^{d}\setminus\widetilde{\boldsymbol{0}}$,
\begin{equation}\label{eqn: condition 4-2-1}
|\nabla(\nabla (-\Delta)^{\lfloor\frac{\eta}{2}\rfloor}h)(x)|\leq C_{3}|x|^{\zeta-2\lfloor\frac{\eta}{2}\rfloor-2}f(|x|)1_{(a,b)}(|x|),
\end{equation}
where $\nabla(\nabla f):=\left[\frac{\partial^{2}f}{\partial x_{i} \partial x_{j}}\right]_{i,j=1}^{d}$.
\end{itemize}
Then it holds that
\begin{itemize}
  \item [\rm{(a)}] there exists a constant $C_{4}>0$ such that
  \begin{equation}\label{eqn: L2 norm of frac. Laplacian f(a)}
\|(-\Delta)^{\frac{\eta}{2}}h\|_{L^{2}(\mathbb{R}^{d})}\leq C_{4}f(a),
\end{equation}
  \item [\rm{(b)}] if $\eta>\frac{d}{2}$ and $\|h\|_{L^{2}(\mathbb{R}^{d})}\leq C_{5}f(a)$ for some constant $C_{5}>0$, then the function
  \[
  p_{h}(x)=\mathcal{F}^{-1}(h(\xi))(x)
  \]
  is integrable on $\mathbb{R}^{d}$ and
  \begin{equation}\label{eqn:ph L1 f(a)}
  \|p_{h}\|_{L^{1}(\mathbb{R}^{d})}\leq C_{6} f(a)
 \end{equation}
 for some constant $C_{6}>0$.
\end{itemize}
\end{corollary}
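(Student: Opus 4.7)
The plan is to retrace the arguments of Lemmas \ref{lem: fractional laplacian is in L2} and \ref{lem: fractional laplacian is in L2 eta geq 2} and of Theorem \ref{thm:ph is in L1}, tracking carefully how the cutoff $1_{(a,b)}(|x|)$, together with the monotonicity of $f$, lets us extract the scalar factor $f(a)$ from every integral.

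For part (a), I would first reduce, exactly as in the proof of Lemma \ref{lem: fractional laplacian is in L2 eta geq 2}, to the case $\eta-2\lfloor\eta/2\rfloor\in[0,2)$ and apply the argument of Lemma \ref{lem: fractional laplacian is in L2} to $(-\Delta)^{\lfloor\eta/2\rfloor}h$ in place of $h$. In that argument, each pointwise bound appearing in the estimates of $\mathcal{I}(x)$ and $\mathcal{J}(x)$ had the form $C|x|^{\kappa}f(|x|)$; here every such bound carries the extra indicator $1_{(a,b)}(|x|)$, so the integrands are supported on one of the sets $\{a<|x+y|<b\}$, $\{a<|x+\theta y|<b\}$, or $\{a<|x|<b\}$. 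Since $f$ is decreasing, on each of these sets $f(|\cdot|)\le f(a)$, so $f(a)$ comes outside every integral as a multiplicative constant. The remaining integrals are precisely those appearing in the proofs of Lemmas \ref{lem: fractional laplacian is in L2} and \ref{lem: fractional laplacian is in L2 eta geq 2} (in fact they are now easier, since the integration region in each is further truncated by $|\cdot|<b$), so they produce finite constants depending only on $d,\zeta,\eta,C_{1},C_{2},C_{3},\alpha_{f},\beta_{f}$. Summing the squared contributions of $I_{1}$, $I_{2}$, and $J$ (plus the analogous second-order term when $\eta-2\lfloor\eta/2\rfloor\in[1,2)$) gives $\|(-\Delta)^{\eta/2}h\|_{L^{2}(\mathbb{R}^{d})}^{2}\le C_{4}^{2}f(a)^{2}$, which is \eqref{eqn: L2 norm of frac. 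Laplacian f(a)}.

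For part (b), I would apply verbatim the Cauchy--Schwarz and Plancherel argument of Theorem \ref{thm:ph is in L1}. Since $\eta>d/2$, the constant $A_{d}=\int_{\mathbb{R}^{d}}(1+|y|^{\eta})^{-2}\,dy$ is finite, and
\begin{equation*}
\|p_{h}\|_{L^{1}(\mathbb{R}^{d})}\le A_{d}^{1/2}\left(\|h\|_{L^{2}(\mathbb{R}^{d})}+\|(-\Delta)^{\eta/2}h\|_{L^{2}(\mathbb{R}^{d})}\right).
\end{equation*}
The first summand is bounded by $C_{5}f(a)$ by hypothesis, and the second by $C_{4}f(a)$ from part (a). Setting $C_{6}:=A_{d}^{1/2}(C_{4}+C_{5})$ then gives \eqref{eqn:ph L1 f(a)}.

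The main obstacle is not a new idea but careful bookkeeping in part (a): in each of the pieces $I_{1}$, $I_{2}$, $J$, as well as in the second-order Taylor estimate used when $\eta-2\lfloor\eta/2\rfloor\in[1,2)$, one has to verify that after replacing $f(|\cdot|)$ by $f(a)$ the remaining integrals still converge and yield a bound independent of $a$ and of $b$ (or depending on the data only through the constants $C_{f},\alpha_{f},\beta_{f}$ already controlled by $f\in\boldsymbol{F}_{\eta,\zeta}$). Since the original integrals in Lemmas \ref{lem: fractional laplacian is in L2} and \ref{lem: fractional laplacian is in L2 eta geq 2} were shown to be finite with the full factor $f(|\cdot|)$ present, the substitution only improves matters, and the argument goes through essentially unchanged.
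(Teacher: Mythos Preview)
Your proposal is correct and follows essentially the same strategy as the paper. The only difference is one of packaging: rather than retracing the proofs of Lemmas \ref{lem: fractional laplacian is in L2} and \ref{lem: fractional laplacian is in L2 eta geq 2} line by line, the paper observes that $f(|x|)1_{(a,b)}(|x|)\le f(a)\,1_{[0,b)}(|x|)$, sets $g(t)=1_{[0,b)}(t)$, verifies directly that $g\in\boldsymbol{F}_{\eta,\zeta}$ (choosing any $T\ge 1$ and admissible $\alpha,\beta$), and then invokes those lemmas as black boxes with $g$ in place of $f$ and constants $C_{i}f(a)$ in place of $C_{i}$; part (b) is then handled exactly as you describe, via \eqref{eqn: L1norm of ph}. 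Your hands-on version reaches the same conclusion, and your remark that the truncation by $b$ only helps convergence is the point that makes the paper's verification of $g\in\boldsymbol{F}_{\eta,\zeta}$ work as well. One small caution: the resulting constant $C_{4}$ is allowed to depend on $b$ (and does, in both arguments); the corollary does not claim uniformity in $b$, and in the application (Lemma \ref{lem: boundedness of L psi of p j}) one has $a=\tfrac12$, $b=2$ fixed.
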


\begin{proof}
(a) \enspace
From the conditions \eqref{eqn: condition 2-2-1}, \eqref{eqn: condition 3-2-1}, \eqref{eqn: condition 4-2-1}
and the fact that $f$ is decreasing, we obtain that
\begin{align*}
|(-\Delta)^{\lfloor\frac{\eta}{2}\rfloor}h(x)|
&\leq C_{1}f(a)|x|^{\zeta-2\lfloor\frac{\eta}{2}\rfloor}1_{[0,b)}(|x|),\\
|\nabla (-\Delta)^{\lfloor\frac{\eta}{2}\rfloor}h(x)|
&\leq C_{2}f(a)|x|^{\zeta-2\lfloor\frac{\eta}{2}\rfloor-1}1_{[0,b)}(|x|),\\
|\nabla(\nabla (-\Delta)^{\lfloor\frac{\eta}{2}\rfloor}h)(x)|
&\leq C_{3}f(a)|x|^{\zeta-2\lfloor\frac{\eta}{2}\rfloor-2}1_{[0,b)}(|x|).
\end{align*}
Let $g(t)=1_{[0,b)}(t)$ ($t\geq 0$). Then $g$ is nonnegative decreasing on $[0,\infty)$.
By taking $T$ with $T\ge 1$, any real number $\beta,\alpha$
such that $\max\{\zeta-\eta,0\}<\beta<\zeta-\eta+\frac{d}{2}$ and $\alpha>\zeta+d$,
we see that
\begin{align*}
g(t)&\leq C(1+t)^{-\beta},\quad \text{for}\quad 0\leq t< T,\\
g(t)&\leq Ct^{-\alpha},\quad \text{for}\quad t\geq T
\end{align*}
for some $C>0$, which implies that $g\in\boldsymbol{F}_{\eta,\zeta}$.
Then by applying the arguments used in the proof of Lemma \ref{lem: fractional laplacian is in L2}
and Lemma \ref{lem: fractional laplacian is in L2 eta geq 2},
we can prove that \eqref{eqn: L2 norm of frac. Laplacian f(a)} holds.

(b) \enspace
By \eqref{eqn: L1norm of ph}, \eqref{eqn: L2 norm of frac. Laplacian f(a)}
and the assumption $\|h\|_{L^{2}(\mathbb{R}^{d})}\leq C_{5}f(a)$, we have
\[
\|p_{h}\|_{L^{1}(\mathbb{R}^{d})}
\leq A_{d}^{\frac{1}{2}}\left(\|h\|_{L^{2}(\mathbb{R}^{d})}+\|(-\Delta_{\xi})^{\frac{\eta}{2}}h\|_{L^{2}(\mathbb{R}^{d})}\right)
\leq A_{d}^{\frac{1}{2}}\left(C_{5}+C_{4}\right)f(a).
\]
The proof is complete.
\end{proof}

\subsection{Symbols of Pseudo-Differential Operators}
\label{subsec:Symbols of PDO}

For $t\geq 0$ and $f\in C_{c}^{\infty}(\mathbb{R}^{d})$,
the pseudo-differential operator $L_{\psi}(t)$ with symbol $\psi$ is defined by
\begin{align}\label{eqn: pseudo-differential operator}
L_{\psi}(t)f(x)=\mathcal{F}^{-1}(\psi(t,\xi)\mathcal{F}f(\xi))(x)
\end{align}
if the inverse Fourier transform exists. Then for any $k\in \mathbb{N}$ and $t\ge0$,
if $f$ belongs to the domain of $L_{\psi}(t)^{k}$, then it is obvious that
\begin{align*}
L_{\psi}(t)^{k}f(x)=L_{\psi^{k}}(t)f(x).
\end{align*}

We now consider the following conditions for symbols $\psi$ of pseudo-differential operators.
There exist positive constants $\kappa:=\kappa_{\psi},\mu:=\mu_{\psi},\gamma:=\gamma_{\psi}>0$
and a natural number $N:=N_{\psi}\in\mathbb{N}$ with $N\ge \lfloor\frac{d}{2}\rfloor +1$ such that
\begin{itemize}
  \item [\textbf{(S1)}] for almost all $t\ge0$ and $\xi\in\mathbb{R}^{d}$ (with respect to the Lebesgue measure),
\begin{equation*}
{\rm Re}[\psi(t,\xi)]\leq -\kappa|\xi|^{\gamma},\quad
\end{equation*}
where ${\rm Re}(z)$ is the real part of the complex number $z$,
  \item [\textbf{(S2)}] for any multi-index $\alpha=(\alpha_1,\cdots,\alpha_d)\in \mathbb{N}_{0}^{d}$ with
  $|\alpha|:=\alpha_1+\cdots+\alpha_d\leq N$, where $\mathbb{N}_{0}=\mathbb{N}\cup\{0\}$,
   and for almost all $t\ge0$ and $\xi=(\xi_1,\cdots,\xi_d)\in\mathbb{R}^{d}\setminus\widetilde{\boldsymbol{0}}$,
      \begin{equation*}
      |\partial_{\xi}^{\alpha}\psi(t,\xi)|\leq \mu|\xi|^{\gamma-|\alpha|},
      \end{equation*}
where $\partial_{\xi}^{\alpha}=\partial_{\xi_1}^{\alpha_1}\cdots \partial_{\xi_d}^{\alpha_d}$
and $\partial_{\xi_i}^{\alpha_i}$ is the $\alpha_i$-th derivative in the variable $\xi_i$.
\end{itemize}
Let $\mathfrak{S}$ be the set of all measurable functions (symbols of pseudo-differential operators)
$\psi:[0,\infty)\times \mathbb{R}^{d}\rightarrow \mathbb{C}$ satisfying the conditions \textbf{(S1)} and \textbf{(S2)}.

\begin{remark}
\upshape
Similar conditions to \textbf{(S1)} and \textbf{(S2)} have been considered in \cite{I. Kim 2016} and \cite{Kumano-go 1973}.
In particular, for the condition used in \cite{I. Kim 2016}, we can choose $N=\lfloor \frac{d}{2}\rfloor+1$.
However, for our study we need a more general choice of $N$ which depends on the degree of the symbol stated in \textbf{(S1)}
and will be used for our main theorem
(see Theorem \ref{thm: Lp boundedness of S lambda infty}; see also Lemma \ref{lem: esti gradient kernel 1}).
\end{remark}

\begin{example}\label{ex: norm xi power gamma}
\upshape
Let $\gamma>0$ and $\kappa>0$ be given
and let $k:[0,\infty)\to [0,\infty)$ be a nonnegative bounded measurable function
such that $k$ is bounded by $M\ge0$.
Consider the measurable function $\psi(t,\xi)=-(\kappa+k(t))|\xi|^{\gamma}$
for $t\ge0$ and $\xi\in\mathbb{R}^{d}$.
We show that $\psi$ satisfies the conditions \textbf{(S1)} and \textbf{(S2)}.
For any $t\ge0$ and $\xi\in\mathbb{R}^{d}$, it holds that
\begin{align*}
{\rm Re}[\psi(t,\xi)]=-(\kappa+k(t))|\xi|^{\gamma}\le -\kappa|\xi|^{\gamma},
\end{align*}
and so $\psi$ satisfies \textbf{(S1)}.
By direct computation, we can see that for any $c\in\mathbb{R}\setminus \{0\}$ and any multi index $\alpha\in \mathbb{N}_{0}^{d}$,
\begin{equation}\label{eqn:esti of deri of norm of xi power a}
|\partial_{\xi}^{\alpha}|\xi|^{c}|\leq K_{c,\alpha}|\xi|^{c-|\alpha|}
\end{equation}
for some $K_{c,\alpha}>0$.
Hence, by taking any natural number $N$, we obtain that for any $\alpha$ with $|\alpha|\leq N$,
\begin{align*}
|\partial_{\xi}^{\alpha}\psi(t,\xi)|
=|-(\kappa+k(t))\partial_{\xi}^{\alpha}|\xi|^{\gamma}|
\leq (\kappa+M)K_{\gamma,\alpha}|\xi|^{\gamma-|\alpha|},
\end{align*}
which implies that $\psi$ satisfies \textbf{(S2)}.
\end{example}

Let $\psi\in \mathfrak{S}$ be a symbol.
Then from \textbf{(S1)}, we can easily see that $\exp\left(\int_{s}^{t}\psi(r,\cdot)dr\right)\in L^1(\mathbb{R}^{d})$ for each $t>s$.
Define
\begin{align*}
p_{\psi}(t,s,x)=\mathcal{F}^{-1}\left(\exp\left(\int_{s}^{t}\psi(r,\cdot)dr\right)\right)(x),\quad t>s,\quad x\in\mathbb{R}^d.
\end{align*}

\begin{corollary}[\cite{I. Kim S. Lim K.-H. Kim 2016}, Lemma 5.2]\label{coro:Int of F of S}
Let $\psi$ be a measurable complex-valued function defined on $\mathbb{R}\times \mathbb{R}^{d}$
satisfying that there exist positive constants $\kappa,\gamma,\mu>0$ such that for almost all $t\in\mathbb{R}$ and $\xi\in\mathbb{R}^{d}$,
\begin{align}\label{eqn: symbol 1-1}
\mathrm{Re}[\psi(t,\xi)]\leq -\kappa |\xi|^{\gamma},
\end{align}
and for any multi-index $\alpha=(\alpha_{1},\cdots,\alpha_{d})$ with $|\alpha|:=\alpha_{1}+\cdots+\alpha\leq \lfloor\frac{d}{2}\rfloor+1$,
\begin{equation}\label{eqn: symbol 2-1}
      |\partial_{\xi}^{\alpha}\psi(t,\xi)|\leq \mu|\xi|^{\gamma-|\alpha|}
\end{equation}
for almost all $t\in\mathbb{R}$ and $\xi\in\mathbb{R}^{d}\setminus\widetilde{\boldsymbol{0}}$.
Put
\[
p_{\psi}(t,s,x):=\mathcal{F}^{-1}\left(\exp\left(\int_{s}^{t}\psi(r,\xi)dr\right)\right)(x),\quad x\in\mathbb{R}^{d}.
\]
Then we have $p_{\psi}(t,s,\cdot)\in L^{1}(\mathbb{R}^{d})$ for all $t>s$.
\end{corollary}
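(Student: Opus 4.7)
The plan is to apply Theorem~\ref{thm:ph is in L1} to the specific function
\[
h(\xi) := \exp\left(\int_{s}^{t} \psi(r,\xi)\,dr\right),
\]
so that $p_\psi(t,s,\cdot) = \mathcal{F}^{-1}(h) \in L^{1}(\mathbb{R}^{d})$. The bound~\eqref{eqn: symbol 1-1} immediately gives $|h(\xi)| \le e^{-\kappa(t-s)|\xi|^{\gamma}}$; in particular $h \in L^{2}(\mathbb{R}^{d})$, and this same Gaussian-type decay supplies a natural candidate for the tail function required by the class $\boldsymbol{F}_{\eta,\zeta}$, namely $f(\rho) = M\,e^{-c\rho^{\gamma}}$ with any $0 < c < \kappa(t-s)$ and $M$ chosen large enough to absorb the polynomial prefactors that will appear below.

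Next I would fix $\eta$ in the range $d/2 < \eta < \min\{\lfloor d/2\rfloor + 1,\; \gamma + d/2\}$, which is nonempty because $\gamma > 0$. This choice simultaneously ensures that $\lfloor \eta \rfloor = \lfloor d/2 \rfloor$ (so the number of derivatives of $h$ to be controlled is at most $\lfloor d/2\rfloor + 1$, exactly the range made available by~\eqref{eqn: symbol 2-1}) and that $\zeta := \gamma$ satisfies $\zeta > \eta - d/2$, as required by Theorem~\ref{thm:ph is in L1}. With this $\zeta$ and $f$, the inclusion $f \in \boldsymbol{F}_{\eta,\gamma}$ follows immediately, since the exponential decays faster than every polynomial and so~\eqref{eqn: f condition 1}--\eqref{eqn: f condition 1-1} hold with room to spare for admissible values of $\alpha_{f}$ and $\beta_{f}$.

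The core computation is the verification of~\eqref{eqn: condition 2-3}--\eqref{eqn: condition 4-3} for $h$. Applying the Fa\`a di Bruno formula to $h = e^{\Phi}$ with $\Phi(\xi) = \int_{s}^{t}\psi(r,\xi)\,dr$ gives, for every multi-index $\alpha$ with $1 \le |\alpha| \le \lfloor d/2\rfloor + 1$,
\[
\partial_{\xi}^{\alpha} h(\xi) \;=\; h(\xi) \sum_{\pi \vdash \alpha} c_{\pi} \prod_{B \in \pi} \partial_{\xi}^{B}\Phi(\xi),
\]
where the sum runs over set-partitions $\pi$ of $\alpha$ into non-empty sub-multi-indices $B$. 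Using~\eqref{eqn: symbol 2-1}, each block contributes $|\partial_{\xi}^{B}\Phi(\xi)| \le (t-s)\mu |\xi|^{\gamma - |B|}$, so a partition into $k$ blocks is bounded by $C_k|\xi|^{k\gamma - |\alpha|}$. Combining with the exponential decay of $h$ yields
\[
|\partial_{\xi}^{\alpha} h(\xi)| \;\le\; C\, |\xi|^{\gamma - |\alpha|}\bigl(1 + |\xi|^{(|\alpha|-1)\gamma}\bigr) e^{-\kappa(t-s)|\xi|^{\gamma}} \;\le\; C'\, |\xi|^{\gamma - |\alpha|} f(|\xi|),
\]
after absorbing the polynomial factor into half of the exponential. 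Summing these bounds over the multi-indices appearing in $(-\Delta)^{\lfloor \eta/2\rfloor} h$ and, depending on whether $\eta - 2\lfloor \eta/2\rfloor$ lies in $(0,1)$ or $[1,2)$, its first or second gradient gives~\eqref{eqn: condition 2-3}--\eqref{eqn: condition 4-3} with $\zeta = \gamma$. Theorem~\ref{thm:ph is in L1} then concludes $p_\psi(t,s,\cdot) \in L^{1}(\mathbb{R}^{d})$.

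The main technical obstacle is the Fa\`a di Bruno bookkeeping: one must extract from the many partition-indexed terms a single clean bound of the form $|\xi|^{\gamma - |\alpha|}f(|\xi|)$ without degrading the optimal singularity at the origin. The structural point that keeps this routine is that the exponential decay coming from~\eqref{eqn: symbol 1-1} dominates every polynomial growth factor generated by the chain rule, so only the coarsest partition $\pi = \{\alpha\}$ determines the actual order of the singularity and produces exactly the exponent $\gamma - |\alpha|$ demanded by Theorem~\ref{thm:ph is in L1}.
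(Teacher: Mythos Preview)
Your proposal follows essentially the same route as the paper: apply Theorem~\ref{thm:ph is in L1} to $h(\xi)=\exp\bigl(\int_s^t\psi(r,\xi)\,dr\bigr)$ with $\zeta=\gamma$, an exponential $f$, and $\eta$ chosen just above $d/2$ so that at most $\lfloor d/2\rfloor+1$ derivatives need to be controlled. The paper simply states the derivative estimate \eqref{eqn: psi 2 ineq-1} and the choice $\eta=(d+\epsilon)/2$ with $\epsilon<\tfrac{\gamma}{4}\wedge\tfrac{1}{4}$, whereas you supply more detail via the Fa\`a di Bruno expansion, but the argument is the same.
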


\begin{proof}
For any multi-index $\alpha$ with $|\alpha|\leq \lfloor\frac{d}{2}\rfloor+1$,
by direct computation using \eqref{eqn: symbol 1-1} and \eqref{eqn: symbol 2-1},
we can see that
\begin{equation}
\left|\partial_{\xi}^{\alpha}\exp\left(\int_{s}^{t}\psi(r,\xi)dr\right)\right|
\leq D(t-s)|\xi|^{\gamma-|\alpha|}e^{-c(t-s)|\xi|^{\gamma}}\label{eqn: psi 2 ineq-1}
\end{equation}
for some constant $D\ge0$.
Now by taking $\epsilon<\frac{\gamma}{4}\wedge \frac{1}{4}$, $\eta=\frac{d+\epsilon}{2}$,
$h(\xi)=\exp\left(\int_{s}^{t}\psi(r,\xi)dr\right)$ for each $t>s$, and $f(u)=e^{-c(t-s)u^{\gamma}}$,
we can see that the assumptions in Theorem \ref{thm:ph is in L1} hold,
and so we prove that
$p_{\psi}(t,s,\cdot)$ is integrable.
\end{proof}

Let $\psi\in \mathfrak{S}$ be a symbol.
From the conditions \textbf{(S1)} and \textbf{(S2)}, by applying Corollary \ref{coro:Int of F of S},
we see that $p_{\psi}(t,s,\cdot)\in L^{1}(\mathbb{R}^d)$ for each $t>s$.
For $f\in L^{p}(\mathbb{R}^{d})$, define
\begin{equation}\label{eqn: mathcal T}
\mathcal{T}_{\psi}(t,s)f(x)=p_{\psi}(t,s,\cdot)*f(x)=\int_{\mathbb{R}^{d}}p_{\psi}(t,s,x-y)f(y)dy.
\end{equation}
Then since $p_{\psi}(t,s,\cdot)\in L^{1}(\mathbb{R}^d)$ for each $t>s$,
by applying the Young's convolution inequality,
we can easily see that $\mathcal{T}_{\psi}(t,s)f\in L^{p}(\mathbb{R}^{d})$ for all $t>s$.
Also, the family $\{\mathcal{T}_{\psi}(t,s)\}_{t\geq s\geq 0}$ is an evolution system. Indeed, for any $t\geq r\geq s$ and $f\in L^{p}(\mathbb{R}^{d})$,
\begin{align*}
\mathcal{T}_{\psi}(t,r)\mathcal{T}_{\psi}(r,s)f(x)
&=p_{\psi}(t,r,\cdot)*\left(p_{\psi}(r,s,\cdot)*f\right)(x)\\
&=\mathcal{F}^{-1}\left(\exp\left(\int_{r}^{t}\psi(u,\xi)du\right)\exp\left(\int_{s}^{r}\psi(u,\xi)du\right)\mathcal{F}f(\xi)\right)(x)\\
&=p_{\psi}(t,s,\cdot)*f(x)\\
&=\mathcal{T}_{\psi}(t,s)f(x).
\end{align*}

\section{H\"ormander's Condition for Kernels}\label{sec:Estimates}

From now on, throughout this paper, we consider the pair $(\psi_{1},\psi_{2})\in \mathfrak{S}^{2}$ of symbols.
Then from the definition of the class $\mathfrak{S}$, for each $i=1,2$,
there exist positive constants $\kappa_{i}:=\kappa_{\psi_{i}}$, $\mu_{i}:=\mu_{\psi_{i}}$,
$\gamma_{i}:=\gamma_{\psi_{i}}$ and $N_{i}:=N_{\psi_{i}}$ with $N_{i}\ge \lfloor\frac{d}{2}\rfloor +1$ such that
\textbf{(S1)} and \textbf{(S2)} hold.
In this case, for notational convenience, we write
\begin{align*}
\boldsymbol{\kappa}:=(\kappa_1,\kappa_2),\quad
\boldsymbol{\mu}:=(\mu_{1},\mu_{2}),\quad
\boldsymbol{\gamma}:=(\gamma_{1},\gamma_{2}),\quad
\boldsymbol{N}:=(N_{1},N_{2}).
\end{align*}

Our approach to proving the Littlewood-Paley type inequality given in \eqref{eqn: LP ineq for q}
is to apply the Calder\'{o}n-Zygmund theorem for vector valued functions.
The left-hand side of inequality \eqref{eqn: LP ineq for q} can be expressed as follows:
\begin{equation}\label{eqn: another expression of LHS of LP ineq}
\int_{\mathbb{R}^{d}}\left(\int_{s}^{s+a}(t-s)^{\frac{q\gamma_{1}}{\gamma_{2}}-1}
           |\left(L_{\psi_{1}}(l)p_{\psi_{2}}(t,s,\cdot)\right)*f(x)|^{q}dt\right)^{\frac{p}{q}}dx.
\end{equation}
In fact, it holds that
\begin{align}
L_{\psi_{1}}(l)\mathcal{T}_{\psi_{2}}(t,s)f(x)
&=L_{\psi_{1}}(l)\left(p_{\psi_{2}}(t,s,\cdot)*f\right)(x)\nonumber\\
&=\left(L_{\psi_{1}}(l)p_{\psi_{2}}(t,s,\cdot)\right)*f(x).\label{eqn: kernel}
\end{align}
Then the inequality \eqref{eqn: LP ineq for q} means that
the convolution operator $Tf=L_{\psi_{1}}(l)p_{\psi_{2}}(\cdot,s,\cdot)*f$ is bounded
from $L^{p}(\mathbb{R}^{d})$ into $L^{p}(\mathbb{R}^{d};V)$
(under certain conditions; see Theorem \ref{thm: Lp boundedness of S lambda infty}),
where $V=L^{q}((s,s+a),(t-s)^{\frac{q\gamma_{1}}{\gamma_{2}}-1}dt)$.
For our purpose, we have to prove the H\"ormander's condition for kernels
and then we first estimate the gradient of the kernel $L_{\psi_{1}}(l)p_{\psi_{2}}(\cdot,s,\cdot)$.

\begin{lemma}\label{lem: esti of partial psi 1 psi 2}
Let $(\psi_{1},\psi_{2})\in \mathfrak{S}^{2}$
and let $\alpha\in\mathbb{N}_{0}^{d}$ be a multi-index with $|\alpha|\leq \min\{N_{1},\, N_{2}\}$.
Then there exist constants $c,C>0$ depending on $ \alpha,\,\boldsymbol{\kappa}$ and $\boldsymbol{\mu}$
such that for any $l\geq 0$ and $t>s$,
\begin{align*}
\left|\partial_{\xi}^{\alpha}\left[\psi_{1}(l,\xi)\exp\left(\int_{s}^{t}\psi_{2}(r,\xi)dr\right)\right]\right|
\leq C|\xi|^{\gamma_{1}-|\alpha|}e^{-c(t-s)|\xi|^{\gamma_{2}}}.
\end{align*}
\end{lemma}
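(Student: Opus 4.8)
The plan is to apply the Leibniz rule for $\partial_\xi^\alpha$ to the product $\psi_1(l,\xi)\,\exp\!\big(\int_s^t\psi_2(r,\xi)\,dr\big)$, distributing the $|\alpha|$ derivatives between the two factors, and then to bound each resulting term using \textbf{(S2)} for $\psi_1$, the chain-rule estimate \eqref{eqn: psi 2 ineq-1} already recorded in the proof of Corollary \ref{coro:Int of F of S} for the exponential factor coming from $\psi_2$, and \textbf{(S1)} for $\psi_2$ to produce the Gaussian-type decay $e^{-c(t-s)|\xi|^{\gamma_2}}$. Concretely, writing $g(\xi)=\exp\!\big(\int_s^t\psi_2(r,\xi)\,dr\big)$, the Leibniz expansion gives
\[
\partial_\xi^\alpha\big[\psi_1(l,\xi)g(\xi)\big]
=\sum_{\beta\le\alpha}\binom{\alpha}{\beta}\,\partial_\xi^\beta\psi_1(l,\xi)\,\partial_\xi^{\alpha-\beta}g(\xi),
\]
and one bounds the sum term by term.

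First I would handle the $\psi_1$-factor: since $|\beta|\le|\alpha|\le N_1$, condition \textbf{(S2)} for $\psi_1$ yields $|\partial_\xi^\beta\psi_1(l,\xi)|\le\mu_1|\xi|^{\gamma_1-|\beta|}$, uniformly in $l\ge0$. Next I would bound $\partial_\xi^{\alpha-\beta}g(\xi)$; since $|\alpha-\beta|\le|\alpha|\le N_2$, the computation already performed in \eqref{eqn: psi 2 ineq-1} (which is exactly the chain/Leibniz-rule estimate for derivatives of $\exp\!\big(\int_s^t\psi_2(r,\xi)\,dr\big)$, using \textbf{(S2)} for $\psi_2$ and the real-part bound \textbf{(S1)}) gives $|\partial_\xi^{\alpha-\beta}g(\xi)|\le D\,(t-s)^{j}|\xi|^{j\gamma_2-|\alpha-\beta|}e^{-\tilde c(t-s)|\xi|^{\gamma_2}}$ for appropriate exponents, or at least the cruder form needed here. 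Multiplying the two bounds, the power of $|\xi|$ in each term is $(\gamma_1-|\beta|)+(\text{something}-|\alpha-\beta|)$, and I would absorb any extra nonnegative powers of $(t-s)|\xi|^{\gamma_2}$ into the exponential by the elementary inequality $u^m e^{-cu}\le C_m e^{-c'u}$ with $u=(t-s)|\xi|^{\gamma_2}$ and $c'<c$; this collapses every term to the common bound $C|\xi|^{\gamma_1-|\alpha|}e^{-c(t-s)|\xi|^{\gamma_2}}$. Summing the finitely many terms in the Leibniz expansion gives the claim.

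The main obstacle is really just bookkeeping: making sure that after distributing derivatives one always has nonnegative surplus powers of $(t-s)|\xi|^{\gamma_2}$ to trade into the exponential, so that the residual $|\xi|$-power is exactly $\gamma_1-|\alpha|$ and the constant $c>0$ and $C>0$ can be chosen uniformly in $l\ge0$ and $t>s$. This is guaranteed because each derivative falling on $g$ either lowers the $|\xi|$-power by one and simultaneously brings down a factor involving $\partial_\xi\psi_2$ bounded (via \textbf{(S2)}) by $|\xi|^{\gamma_2-1}$ together with a factor $(t-s)$, so the net effect of $k$ such derivatives is a factor $(t-s)^{j}|\xi|^{j\gamma_2-k}$ with $j\le k$, whose $|\xi|^{j\gamma_2}$ and $(t-s)^j$ are exactly the surplus to be absorbed. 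The hypothesis $|\alpha|\le\min\{N_1,N_2\}$ is precisely what makes both \textbf{(S2)} applicable. Hence no genuinely new estimate is required beyond \eqref{eqn: psi 2 ineq-1}, \textbf{(S1)} and \textbf{(S2)}.
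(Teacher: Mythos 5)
Your proposal is correct and follows essentially the same path as the paper's proof: Leibniz rule to split derivatives between $\psi_1$ and the exponential factor, \textbf{(S2)} for the $\psi_1$ terms, the already-established estimate \eqref{eqn: psi 2 ineq-1} for derivatives of $\exp\!\big(\int_s^t\psi_2\,dr\big)$, and absorption of the surplus $(t-s)|\xi|^{\gamma_2}$ powers into the exponential (the paper does this via $xe^{-x}\le1$, you via the equivalent $u^me^{-cu}\le C_me^{-c'u}$). No meaningful difference in approach or scope.
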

\begin{proof}
By applying the formula given in \eqref{eqn: psi 2 ineq-1}
and the fact that $xe^{-x}\leq 1$ for any $x>0$,
for any multi index $\beta\in\mathbb{N}_{0}^{d}$ with $|\beta|\leq N_{2}$, we obtain that
\begin{align}\label{eqn:psi 2 ineq-2}
\left|\partial_{\xi}^{\beta}\exp\left(\int_{s}^{t}\psi_{2}(r,\xi)dr\right)\right|
&\leq D(t-s)|\xi|^{\gamma_{2}-|\beta|}e^{-c_{1}(t-s)|\xi|^{\gamma_{2}}}\nonumber\\
&=D'|\xi|^{-|\beta|}e^{-c(t-s)|\xi|^{\gamma_{2}}},
\end{align}
for some constant $D,D',c_{1},c>0$.
By applying the Leibniz rule, \textbf{(S2)}, \eqref{eqn:psi 2 ineq-2}, we obtain that
\begin{align*}
&\left|\partial_{\xi}^{\alpha}\left[\psi_{1}(l,\xi)
     \exp\left(\int_{s}^{t}\psi_{2}(r,\xi)dr\right)\right]\right|\\
&\leq\sum_{\beta\leq \alpha}\binom{\alpha}{\beta}\left|\partial_{\xi}^{\alpha-\beta}\psi_{1}(l,\xi)\right|
\left|\partial_{\xi}^{\beta}\exp\left(\int_{s}^{t}\psi_{2}(r,\xi)dr\right)\right| \\
&\leq \sum_{\beta\leq \alpha}\binom{\alpha}{\beta}\mu_{1}|\xi|^{\gamma_{1}-|\alpha-\beta|}
           D'|\xi|^{-|\beta|}e^{-c(t-s)|\xi|^{\gamma_{2}}},
\end{align*}
which implies the desired assertion.
\end{proof}

For $l\geq 0$, $t>s$ and $x\in\mathbb{R}^{d}$, we put
\begin{align*}
F_{\psi_{1},\psi_{2}}(l,t,s,x)
 &=\sum_{j=1}^{d}\left|\mathcal{F}^{-1}\left(\xi_{j}\psi_{1}(l,\xi)\exp\left(\int_{s}^{t}\psi_{2}(r,\xi)dr\right)\right)(x)\right|.
\end{align*}

\begin{lemma}\label{lem: esti gradient kernel 1}
Let $(\psi_{1},\psi_{2})\in \mathfrak{S}^{2}$ and let $N=\min\{N_{1},N_{2}\}$.
Suppose that $N>d+1+\lfloor\gamma_{1}\rfloor$.
Then there exists a constant $C>0$ depending on $\mu_{1},\gamma_{1},d$ and $N$ such that
for any $l\geq 0$, $t>s$ and $x\in\mathbb{R}^{d}\setminus \{0\}$,
\begin{align*}
F_{\psi_{1},\psi_{2}}(l,t,s,x)\leq C|x|^{-(d+1+\gamma_{1})}.
\end{align*}
\end{lemma}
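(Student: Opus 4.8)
The plan is to estimate $F_{\psi_1,\psi_2}(l,t,s,x)$ by writing each term as the inverse Fourier transform of $g_j(\xi):=\xi_j\psi_1(l,\xi)\exp(\int_s^t\psi_2(r,\xi)\,dr)$ and then exploiting the standard principle that smoothness and decay of $g_j$ translate into decay of $\mathcal{F}^{-1}g_j$. Concretely, for a multi-index $\alpha$ with $|\alpha|\le N$, the identity $x^\alpha\mathcal{F}^{-1}g_j(x)=(\pm i)^{|\alpha|}\mathcal{F}^{-1}(\partial_\xi^\alpha g_j)(x)$ gives $|x|^{|\alpha|}|\mathcal{F}^{-1}g_j(x)|\le C\|\partial_\xi^\alpha g_j\|_{L^1(\mathbb{R}^d)}$ (after summing over the finitely many monomials of a given degree). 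So the heart of the matter is to bound $\|\partial_\xi^\alpha g_j\|_{L^1}$ uniformly in $l\ge0$ and $t>s$ for a suitable range of $|\alpha|$, and then to combine the estimate for $|\alpha|=0$ with the estimate for $|\alpha|$ equal to the smallest integer strictly exceeding $d+1+\gamma_1$.

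First I would apply the Leibniz rule to $\partial_\xi^\alpha g_j = \partial_\xi^\alpha(\xi_j\,\cdot\,\psi_1(l,\xi)\exp(\int_s^t\psi_2))$ and invoke Lemma \ref{lem: esti of partial psi 1 psi 2} (together with the trivial estimates $|\partial_\xi^\beta \xi_j|\le 1$ for $|\beta|\le1$, zero otherwise) to obtain, for $|\alpha|\le N-1$,
\begin{align*}
|\partial_\xi^\alpha g_j(\xi)| \le C\bigl(|\xi|^{\gamma_1+1-|\alpha|}+|\xi|^{\gamma_1-|\alpha|}\bigr)e^{-c(t-s)|\xi|^{\gamma_2}}.
\end{align*}
Then I would integrate this over $\mathbb{R}^d$. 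Passing to polar coordinates, $\int_{\mathbb{R}^d}|\xi|^{a}e^{-c(t-s)|\xi|^{\gamma_2}}\,d\xi = \sigma(\partial B_1)\int_0^\infty \rho^{a+d-1}e^{-c(t-s)\rho^{\gamma_2}}\,d\rho$, and the substitution $u=c(t-s)\rho^{\gamma_2}$ shows this equals $C(t-s)^{-(a+d)/\gamma_2}\Gamma((a+d)/\gamma_2)$, which is finite precisely when $a+d>0$. Since $a$ ranges over $\{\gamma_1-|\alpha|,\,\gamma_1+1-|\alpha|\}$, finiteness requires $|\alpha|<d+\gamma_1$, i.e. $|\alpha|\le d+\lfloor\gamma_1\rfloor$ will do (and one checks $a+d>0$ strictly). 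This is where the hypothesis $N>d+1+\lfloor\gamma_1\rfloor$ enters: it guarantees that the integer $m:=\lfloor d+1+\gamma_1\rfloor+1$ (the smallest integer strictly greater than $d+1+\gamma_1$) satisfies $m\le N-1$, so that $\|\partial_\xi^\alpha g_j\|_{L^1}<\infty$ for all $|\alpha|=m$ as well as for $|\alpha|=0$.

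The finishing step is the usual dichotomy on $|x|$. For $|x|\le1$ use the crude bound $|\mathcal{F}^{-1}g_j(x)|\le (2\pi)^{-d/2}\|g_j\|_{L^1}\le C$; for $|x|\ge1$ pick $\alpha$ with $|\alpha|=m$ achieving $|x^\alpha|\ge c|x|^{m}$ (some coordinate of $x$ has modulus at least $|x|/\sqrt d$), giving $|\mathcal{F}^{-1}g_j(x)|\le C|x|^{-m}\le C|x|^{-(d+1+\gamma_1)}$ since $m\ge d+1+\gamma_1$ and $|x|\ge1$. Combining the two regimes (and absorbing the $|x|\le1$ bound into $C|x|^{-(d+1+\gamma_1)}$, which is $\ge C$ there) yields $F_{\psi_1,\psi_2}(l,t,s,x)\le C|x|^{-(d+1+\gamma_1)}$ for all $x\ne0$, with $C$ independent of $l,t,s$. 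The main obstacle is bookkeeping rather than conceptual: one must verify that the exponent constraint $a+d>0$ is met for \emph{every} term produced by the Leibniz rule at the chosen order $m$, which is exactly what the gap $N>d+1+\lfloor\gamma_1\rfloor$ is designed to ensure, and one must be careful that all constants $c,C$ coming from Lemma \ref{lem: esti of partial psi 1 psi 2} and from the Gamma-function computation are uniform in $l\ge0$ and $t>s$ (the $(t-s)$-powers cancel or are harmless because we only use $|\alpha|=0$ and $|\alpha|=m$ together with $t-s>0$; in fact the cleanest route is to note the $(t-s)$-dependence never actually obstructs the $x$-decay since we only need an $L^1$-in-$\xi$ bound for fixed $t>s$, and the resulting constant, though depending on $t-s$, can be taken uniform after observing that $\sup_{\tau>0}\tau^{-b}\Gamma(b)$ is not what we need — rather, we keep the bound for each fixed $t,s$ and check the final constant in the statement is $t,s$-free by tracking that the two orders $|\alpha|\in\{0,m\}$ suffice without any positive power of $t-s$ in the numerator). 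I would present the argument so that the constant in the conclusion manifestly depends only on $\mu_1,\gamma_1,d,N$.
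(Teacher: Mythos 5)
There is a genuine gap in your argument, and it is exactly the point the paper's dyadic cutoff at scale $\lambda$ is designed to handle. You propose to prove the decay $|x|^{-(d+1+\gamma_1)}$ by estimating $\|\partial_\xi^\alpha g_j\|_{L^1(\mathbb{R}^d)}$ for $|\alpha|=0$ (used when $|x|\le1$) and for $|\alpha|=m:=\lfloor d+1+\gamma_1\rfloor+1$ (used when $|x|\ge1$). Both endpoints fail. From Lemma \ref{lem: esti of partial psi 1 psi 2} and the Leibniz rule one gets $|\partial_\xi^\alpha g_j(\xi)|\le C|\xi|^{\gamma_1+1-|\alpha|}e^{-c(t-s)|\xi|^{\gamma_2}}$, and the polar-coordinate integral $\int_0^\infty \rho^{\gamma_1+1-|\alpha|+d-1}e^{-c(t-s)\rho^{\gamma_2}}\,d\rho$ is \emph{divergent near $\rho=0$} whenever $|\alpha|\ge d+1+\gamma_1$; in particular $\|\partial_\xi^\alpha g_j\|_{L^1}=\infty$ for $|\alpha|=m$, so the bound $|x^\alpha\mathcal{F}^{-1}g_j(x)|\le C\|\partial_\xi^\alpha g_j\|_{L^1}$ produces nothing. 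You in fact record the integrability constraint ($|\alpha|$ strictly less than $d+\gamma_1$ in your count, $<d+1+\gamma_1$ in mine), and then choose $m>d+1+\gamma_1$; these two requirements are incompatible, and the hypothesis $N>d+1+\lfloor\gamma_1\rfloor$ does not reconcile them — it only ensures that $m$ derivatives of the symbol are available, not that the resulting function is integrable.

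The $|x|\le1$ half also fails: $\|g_j\|_{L^1}\lesssim \int_0^\infty \rho^{\gamma_1+d}e^{-\kappa_2(t-s)\rho^{\gamma_2}}\,d\rho \sim (t-s)^{-(\gamma_1+1+d)/\gamma_2}$, which blows up as $t\to s^+$, so your ``crude bound $\le C$'' is not uniform in $t,s$ as the lemma requires. The paper avoids both problems simultaneously: it cuts the frequency domain at a movable scale $\lambda$, bounds the low-frequency piece over the \emph{ball} $|\xi|\le2\lambda$ by $\lambda^{\gamma_1+1+d}$ (no exponential decay needed, hence $t,s$-uniform), bounds the high-frequency piece with $N$ derivatives over $|\xi|\ge\lambda$ by $|x|^{-N}\lambda^{\gamma_1+1+d-N}$ (convergent at infinity because $N>d+1+\gamma_1$, and no singularity at the origin because $\lambda>0$), and then optimizes $\lambda=|x|^{-1}$. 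This ``cutoff-plus-optimization'' step is what produces the non-integer exponent $d+1+\gamma_1$; it is not an optional refinement of your plan but the essential missing ingredient, since no fixed integer number of integrations by parts over all of $\mathbb{R}^d$ can reach it.
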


\begin{proof}
We use the arguments used in \cite{Miao 2008}.
Define a function $\rho\in C_{c}^{\infty}(\mathbb{R}^{d})$ by
\[
\rho(\xi)=\left\{
            \begin{array}{ll}
              1, & \hbox{$|\xi|\leq 1$,} \\
              0, & \hbox{$|\xi|> 2$,}
            \end{array}
          \right.\qquad \xi\in\mathbb{R}^{d}.
\]
Let $\lambda>0$ be given. Note that we have
\begin{equation}\label{eqn: rho lambda}
\rho\left(\frac{\xi}{\lambda}\right)=\left\{
            \begin{array}{ll}
              1, & \hbox{$|\xi|\leq \lambda$,} \\
              0, & \hbox{$|\xi|> 2\lambda$,}
            \end{array}
          \right.\qquad
1-\rho\left(\frac{\xi}{\lambda}\right)=\left\{
            \begin{array}{ll}
              0, & \hbox{$|\xi|\leq \lambda$,} \\
              1, & \hbox{$|\xi|> 2\lambda$.}
            \end{array}
          \right.
\end{equation}
For each $j=1,2,\cdots,d$, we
put
\[
G_{j}(\xi)=\xi_{j}\psi_{1}(l,\xi)
       \exp\left(\int_{s}^{t}\psi_{2}(r,\xi)dr\right).
\]
Let $x\in\mathbb{R}^{d}\setminus \{0\}$ be given.
Then by applying \eqref{eqn: rho lambda}, we have
\begin{align}
\mathcal{F}^{-1}\left(G_{j}(\xi)\right)(x)
&=\int_{\mathbb{R}^d}e^{ix\cdot\xi}G_{j}(\xi)\rho\left(\frac{\xi}{\lambda}\right)d\xi
     +\int_{\mathbb{R}^d}e^{ix\cdot\xi}G_{j}(\xi)\left(1-\rho\left(\frac{\xi}{\lambda}\right)\right)d\xi\nonumber\\
&=I_{1}+I_{2},\label{eqn: I1 I2}
\end{align}
where
\begin{align*}
I_{1}=\int_{|\xi|\leq 2\lambda}e^{ix\cdot\xi}G_{j}(\xi)\rho\left(\frac{\xi}{\lambda}\right)d\xi,
\quad
I_{2}=\int_{|\xi|>\lambda}e^{ix\cdot\xi}G_{j}(\xi)\left(1-\rho\left(\frac{\xi}{\lambda}\right)\right)d\xi.
\end{align*}
For $I_{1}$, since $|\rho(\xi)|\leq 1$ for all $\xi\in\mathbb{R}^{d}$,
by the Assumptions \textbf{(S1)} and \textbf{(S2)},
and applying the change of variables to polar coordinates,
we obtain that
\begin{align}
|I_{1}|
&\leq\int_{|\xi|\leq 2\lambda}|\xi_{j}\psi_{1}(l,\xi)|
      \left|\exp\left(\int_{s}^{t}\psi_{2}(r,\xi)dr\right)\right|d\xi\nonumber\\
&\le \mu_{1}\int_{|\xi|\leq 2\lambda}|\xi_{j}||\xi|^{\gamma_{1}}d\xi
\leq \mu_{1}\frac{2\pi^{\frac{d}{2}}}{\Gamma(\frac{d}{2})}\int_{0}^{2\lambda}r^{\gamma_{1}+1+d-1}dr\nonumber\\
&=\mu_{1}\frac{2\pi^{\frac{d}{2}}}{\Gamma(\frac{d}{2})}(2\lambda)^{\gamma_{1}+1+d}.\label{eqn: Gi I1}
\end{align}
Now we estimate $I_{2}$. For a differentiable function $f$ on $\mathbb{R}^{d}$, define
\[
\mathcal{D}f(\xi)=\frac{x\cdot\nabla_{\xi}f(\xi)}{i|x|^2},\qquad \xi\in\mathbb{R}^{d}.
\]
Then it is obvious that $\mathcal{D}e^{ix\cdot\xi}=e^{i x\cdot \xi}$.
Therefore, we obtain that
\begin{align}
I_{2}&=\int_{|\xi|>\lambda}e^{ix\cdot\xi}G_{j}(\xi)\left(1-\rho\left(\frac{\xi}{\lambda}\right)\right)d\xi
=\int_{|\xi|>\lambda}\left(\mathcal{D}^{N}e^{ix\cdot\xi}\right)G_{j}(\xi)\left(1-\rho\left(\frac{\xi}{\lambda}\right)\right)d\xi\nonumber\\
&=\int_{|\xi|>\lambda}e^{ix\cdot\xi}
     \left[(\mathcal{D}^*)^{N}\left(G_{j}(\xi)\left(1-\rho\left(\frac{\xi}{\lambda}\right)\right)\right)\right]d\xi\nonumber\\
&=I_{21}+I_{22},\label{eqn: decomp of I2}
\end{align}
where $\mathcal{D}^{*}=-\frac{x\cdot\nabla_{\xi}}{i|x|^2}$ and
\begin{align*}
I_{21}
=\int_{\lambda< |\xi|\leq 2\lambda}
    e^{ix\cdot\xi}(\mathcal{D}^{*})^{N}\left(G_{j}(\xi)\left(1-\rho\left(\frac{\xi}{\lambda}\right)\right)\right)d\xi,
~~
I_{22}=\int_{|\xi|> 2\lambda}e^{ix\cdot\xi}(\mathcal{D}^{*})^{N}G_{j}(\xi)d\xi.
\end{align*}
We estimate $I_{21}$. Note that
\[
(\mathcal{D}^{*})^{N}
=\frac{(-1)^{N}}{i^{N}|x|^{2N}}\sum_{|\alpha|=N}x^{\alpha}\partial_{\xi}^{\alpha},
\]
where $x^{\alpha}=x_{1}^{\alpha_{1}}\cdots x_{d}^{\alpha_{d}}$ for $x=(x_{1},\cdots,x_{d})\in\mathbb{R}^{d}$
and a multi-index $\alpha=(\alpha_{1},\cdots,\alpha_{d})\in\mathbb{N}_{0}^{d}$.
Therefore, by applying the Leibniz rule, we obtain that
\begin{align*}
(\mathcal{D}^{*})^{N}\left(G_{j}(\xi)\left(1-\rho\left(\frac{\xi}{\lambda}\right)\right)\right)
&=\frac{(-1)^{N}}{i^{N}|x|^{2N}}\sum_{|\alpha|=N}x^{\alpha}\partial_{\xi}^{\alpha}
    \left(G_{j}(\xi)\left(1-\rho\left(\frac{\xi}{\lambda}\right)\right)\right)\\
&=\frac{(-1)^{N}}{i^{N}|x|^{2N}}\sum_{|\alpha|=N}x^{\alpha}\sum_{\beta\leq \alpha}\binom{\alpha}{\beta}\\
&\qquad\times    \left(\partial_{\xi}^{\alpha-\beta}G_{j}(\xi)\right)
       \left(\partial_{\xi}^{\beta}\left(1-\rho\left(\frac{\xi}{\lambda}\right)\right)\right).
\end{align*}
Using Lemma \ref{lem: esti of partial psi 1 psi 2}, we have
\begin{align}\label{eqn: partial alpha Gi}
|\partial_{\xi}^{\alpha-\beta}G_{j}(\xi)|
&\leq C_{\alpha-\beta}^{(1)}|\xi|^{\gamma_{1}+1-|\alpha-\beta|}e^{-c(t-s)|\xi|^{\gamma_{2}}}
\leq C_{\alpha-\beta}^{(1)}|\xi|^{\gamma_{1}+1-|\alpha-\beta|}.
\end{align}
Also, since $\lambda^{-|\beta|}\leq 2^{|\beta|}|\xi|^{-|\beta|}$ for $|\xi|\leq 2\lambda$, we have
\begin{equation}\label{eqn: partial 1-rho}
\left|\partial_{\xi}^{\beta}\left(1-\rho\left(\frac{\xi}{\lambda}\right)\right)\right|
\leq C_{\beta}^{(2)}\lambda^{-|\beta|}
\le C_{\beta}^{(2)}2^{|\beta|}|\xi|^{-|\beta|}.
\end{equation}
Therefore, by applying \eqref{eqn: partial alpha Gi} and \eqref{eqn: partial 1-rho}, we obtain that
\begin{align}
&\left|(\mathcal{D}^{*})^{N}\left(G_{j}(\xi)\left(1-\rho\left(\frac{\xi}{\lambda}\right)\right)\right)\right|\nonumber\\
&\leq \frac{1}{|x|^{2N}}\sum_{|\alpha|=N}|x^{\alpha}|\sum_{\beta\leq \alpha}\binom{\alpha}{\beta}\left|\partial_{\xi}^{\alpha-\beta}G_{j}(\xi)\right|
      \left|\partial_{\xi}^{\beta}\left(1-\rho\left(\frac{\xi}{\lambda}\right)\right)\right|\nonumber\\
&\leq\frac{1}{|x|^{N}}\sum_{|\alpha|=N}\sum_{\beta\leq \alpha}
     \binom{\alpha}{\beta}C_{\alpha-\beta}^{(1)}C_{\beta}^{(2)} 2^{|\beta|} |\xi|^{\gamma_{1}+1-|\alpha-\beta|} |\xi|^{-|\beta|}\nonumber\\
&=C^{(3)}\frac{1}{|x|^{N}}|\xi|^{\gamma_{1}+1-N}\label{eqn: L*N Gi 1-rho}
\end{align}
for some $C^{(3)}>0$.
By \eqref{eqn: L*N Gi 1-rho}, changing variables to the polar coordinates, and the assumption $N>d+1+\lfloor\gamma_{1}\rfloor$, we obtain that
\begin{align*}
|I_{21}|
&\leq \int_{\lambda\leq |\xi|\leq 2\lambda}
           \left|(\mathcal{D}^{*})^{N}\left(G_{j}(\xi)\left(1-\rho(\frac{\xi}{\delta})\right)\right)\right|d\xi
\le C^{(3)}\frac{1}{|x|^{N}}\int_{\lambda\leq |\xi|\leq 2\lambda}|\xi|^{\gamma_{1}+1-N}d\xi\\
&=C^{(3)}\frac{1}{|x|^{N}}\frac{2\pi^{\frac{d}{2}}}{\Gamma(\frac{d}{2})}
\int_{\lambda}^{2\lambda}r^{\gamma_{1}+1-N+d-1}dr\\
&=C^{(4)}\frac{1}{|x|^{N}}\lambda^{\gamma_{1}+1-N+d}
\end{align*}
for some $C^{(4)}>0$.

We now estimate $I_{22}$. By Lemma \ref{lem: esti of partial psi 1 psi 2},
since $e^{-c(t-s)|\xi|^{\gamma_{2}}}\leq 1$, we obtain that
\begin{align*}
\left|((\mathcal{D}^{*})^{N}G_{j})(\xi)\right|
&\leq \frac{1}{|x|^{2N}}\sum_{|\alpha|=N}|x^{\alpha}
|\left|\partial_{\xi}^{\alpha}G_{j}(\xi)\right|\nonumber\\
&\leq \frac{1}{|x|^{2N}}\sum_{|\alpha|=N}|x|^{|\alpha|}C_{\alpha}^{(5)}e^{-c(t-s)|\xi|^{\gamma_{2}}}|\xi|^{\gamma_{1}+1-|\alpha|}\nonumber\\
&=C^{(6)}\frac{1}{|x|^{N}}|\xi|^{\gamma_{1}+1-N}
\end{align*}
for some $C^{(6)}>0$.
By the assumption $N>d+1+\lfloor\gamma_{1}\rfloor$,
we obtain that
\begin{align*}
|I_{22}|&\leq \int_{|\xi|\geq 2\lambda}\left|(\mathcal{D}^{*})^{N}\left(G_{j}(\xi)\right)\right|d\xi
\leq C^{(6)}\frac{1}{|x|^{N}}\int_{|\xi|\geq 2\lambda}|\xi|^{\gamma_{1}+1-N}d\xi\\
&\leq C^{(6)}\frac{1}{|x|^{N}}
\frac{2\pi^{\frac{d}{2}}}{\Gamma(\frac{d}{2})}\int_{2\lambda}^{\infty}r^{\gamma_{1}+1-N+d-1}dr\\
&=C^{(7)}\frac{1}{|x|^{N}}\lambda^{\gamma_{1}+1-N+d}
\end{align*}
for some $C^{(7)}>0$.
Therefore, by \eqref{eqn: I1 I2}, \eqref{eqn: Gi I1} and \eqref{eqn: decomp of I2}, we have for any $i=1,2,\cdots,d,$
\begin{equation}
|\mathcal{F}^{-1}(G_{j}(\xi))(x)|
\leq \mu_{1}\frac{2\pi^{\frac{d}{2}}}{\Gamma(\frac{d}{2})}2^{\gamma_{1}+1+d}\lambda^{\gamma_{1}+d+1}
+C^{(8)}\frac{1}{|x|^{N}}\lambda^{\gamma_{1}+d+1-N}\label{eqn: IFT of Gi less lambda}
\end{equation}
for some $C^{(8)}>0$.
By taking $\lambda=|x|^{-1}$ in \eqref{eqn: IFT of Gi less lambda}, we get
\begin{align*}
F_{\psi_{1},\psi_{2}}(l,t,s,x)
&=\sum_{j=1}^{d}|\mathcal{F}^{-1}(G_{j}(\xi))(x)|
\leq C|x|^{-(d+1+\gamma_{1})}
\end{align*}
for some $C>0$.
The proof is complete.
\end{proof}

For a real-valued differentiable function $f$ on $\mathbb{R}^{d}$, we denote the gradient of $f$  by $\nabla f$.

\begin{lemma}\label{lem: esti of grad K}
Let $(\psi_{1},\psi_{2})\in\mathfrak{S}^{2}$ and let $N=\min\{N_{1},N_{2}\}$. Assume that $N>d+1+\lfloor\gamma_{1}\rfloor$. Then there exists a constant $C>0$ depending on $\mu_{1},\boldsymbol{\gamma},\kappa_{2}, d$ and $N$ such that
for any $l\geq 0$, $t>s$ and $x\in\mathbb{R}^{d}\setminus\{0\}$,
\begin{align}
|\nabla L_{\psi_{1}}(l)p_{\psi_{2}}(t,s,x)|
\leq C\left(|x|^{-(d+1+\gamma_{1})}\wedge (t-s)^{-\frac{d+1+\gamma_{1}}{\gamma_{2}}}\right).
  \label{eqn: bound of Dx K}
\end{align}
\end{lemma}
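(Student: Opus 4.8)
The plan is to split \eqref{eqn: bound of Dx K} into its two halves: the bound by $|x|^{-(d+1+\gamma_1)}$ comes essentially for free from Lemma \ref{lem: esti gradient kernel 1}, and the bound by $(t-s)^{-(d+1+\gamma_1)/\gamma_2}$ from a crude $L^1$-estimate of the Fourier integrand combined with a scaling substitution.

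For the first half, I would note that differentiating under the inverse Fourier transform multiplies the symbol by $i\xi_j$, so that for each $j=1,\dots,d$,
\[
\frac{\partial}{\partial x_j}L_{\psi_1}(l)p_{\psi_2}(t,s,x)
=\mathcal{F}^{-1}\!\left(i\xi_j\,\psi_1(l,\xi)\exp\!\left(\int_s^t\psi_2(r,\xi)dr\right)\right)(x),
\]
whence $|\nabla L_{\psi_1}(l)p_{\psi_2}(t,s,x)|\le F_{\psi_1,\psi_2}(l,t,s,x)$. Applying Lemma \ref{lem: esti gradient kernel 1}, whose hypothesis $N>d+1+\lfloor\gamma_1\rfloor$ is exactly the one assumed here, yields $|\nabla L_{\psi_1}(l)p_{\psi_2}(t,s,x)|\le C|x|^{-(d+1+\gamma_1)}$.

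For the second half, I would estimate each summand of $F_{\psi_1,\psi_2}(l,t,s,x)$ by the $L^1$-norm of its integrand,
\[
\left|\mathcal{F}^{-1}\!\left(\xi_j\psi_1(l,\xi)e^{\int_s^t\psi_2(r,\xi)dr}\right)(x)\right|
\le\frac{1}{(2\pi)^{d/2}}\int_{\mathbb{R}^d}|\xi_j|\,|\psi_1(l,\xi)|\left|e^{\int_s^t\psi_2(r,\xi)dr}\right|d\xi,
\]
then invoke \textbf{(S2)} for $\psi_1$ with $\alpha=0$ to get $|\psi_1(l,\xi)|\le\mu_1|\xi|^{\gamma_1}$, and \textbf{(S1)} for $\psi_2$ to get $|e^{\int_s^t\psi_2(r,\xi)dr}|=e^{\int_s^t\mathrm{Re}[\psi_2(r,\xi)]dr}\le e^{-\kappa_2(t-s)|\xi|^{\gamma_2}}$. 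Passing to polar coordinates reduces the bound to a constant multiple of $\int_0^\infty r^{\gamma_1+d}e^{-\kappa_2(t-s)r^{\gamma_2}}dr$, and the substitution $u=\kappa_2(t-s)r^{\gamma_2}$ evaluates this to $\frac{1}{\gamma_2}\bigl(\kappa_2(t-s)\bigr)^{-(d+1+\gamma_1)/\gamma_2}\Gamma\!\left(\tfrac{d+1+\gamma_1}{\gamma_2}\right)$, which is finite because $\gamma_1,\gamma_2>0$. Summing over $j$ gives $|\nabla L_{\psi_1}(l)p_{\psi_2}(t,s,x)|\le C(t-s)^{-(d+1+\gamma_1)/\gamma_2}$ with $C$ depending only on $\mu_1,\boldsymbol{\gamma},\kappa_2,d$. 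Taking the minimum of the two bounds finishes the argument.

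There is no genuine obstacle here; the only points to watch are that the $L^1$-estimate in the last step uses only \textbf{(S1)}--\textbf{(S2)} and is therefore uniform in $l$ and $x$, and that the two constants combine into a single $C$ depending on $\mu_1,\boldsymbol{\gamma},\kappa_2,d$ and $N$, as claimed in the statement.
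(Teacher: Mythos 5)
Your proof is correct and follows essentially the same route as the paper: the first half quotes Lemma \ref{lem: esti gradient kernel 1} verbatim, and for the second half the paper also bounds $F_{\psi_1,\psi_2}$ by the $L^1$-norm of the Fourier integrand using \textbf{(S1)}--\textbf{(S2)}, though it performs the scaling substitution $\xi\mapsto (t-s)^{-1/\gamma_2}y$ in $\mathbb{R}^d$ rather than passing to polar coordinates first; the two computations are equivalent.
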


\begin{proof}
For $i=1,2,\cdots,d$, we have
\begin{align*}
\partial_{x_{i}}L_{\psi_{1}}(l)p_{\psi_{2}}(t,s,x)
=\mathcal{F}^{-1}\left(\xi_{i}\psi_{1}(l,\xi)\exp\left(\int_{s}^{t}\psi_{2}(r,\xi)dr\right)\right)(x).
\end{align*}
Since $(a+b)^{\frac{1}{p}}\leq a^{\frac{1}{p}}+b^{\frac{1}{p}}$ for any $p\geq 1$, by Lemma \ref{lem: esti gradient kernel 1},
we obtain that
\begin{align}
|\nabla L_{\psi_{1}}(l)p_{\psi_{2}}(t,s,x)|
&=\left(\sum_{i=1}^{d}|\partial_{x_{i}}L_{\psi_{1}}(l)p_{\psi_{2}}(t,s,x)|^{2}\right)^{\frac{1}{2}}
\leq \sum_{i=1}^{d}|\partial_{x_{i}}L_{\psi_{1}}(l)p_{\psi_{2}}(t,s,x)|\nonumber\\
&=\sum_{i=1}^{d}\left|\mathcal{F}^{-1}\left(\xi_{i}
    \psi_{1}(l,\xi)\exp\left(\int_{s}^{t}\psi_{2}(r,\xi)dr\right)\right)(x)\right|\nonumber\\
&= F_{\psi_{1},\psi_{2}}(l,t,s,x)\nonumber\\
&\leq C^{(1)}|x|^{-(d+1+\gamma_{1})}.\label{eqn: esti nabla K 1}
\end{align}
On the other hand, by Assumptions \textbf{(S1)} and \textbf{(S2)} and changing variables by $(t-s)^{\frac{1}{\gamma_{2}}}\xi\rightarrow y$, we obtain that for any $x\in\mathbb{R}^{d}$,
\begin{align}
F_{\psi_{1},\psi_{2}}(l,t,s,x)
&=\sum_{i=1}^{d}\left|\mathcal{F}^{-1}\left(\xi_{i}\psi_{1}(l,\xi)
    \exp\left(\int_{s}^{t}\psi_{2}(r,\xi)dr\right)\right)(x)\right|\nonumber\\
&\leq \sum_{i=1}^{d} \int_{\mathbb{R}^{d}}|\xi_{i}||\psi_{1}(l,\xi)|
     \left|\exp\left(\int_{s}^{t}\psi_{2}(r,\xi)dr\right)\right|d\xi\nonumber\\
&\leq \sum_{i=1}^{d}\int_{\mathbb{R}^{d}}\mu_{1}
|\xi|^{\gamma_{1} +1}e^{-\kappa_{2}(t-s)|\xi|^{\gamma_{2}}}d\xi\nonumber\\
&=\sum_{i=1}^{d}\int_{\mathbb{R}^{d}}\mu_{1}
|(t-s)^{-\frac{1}{\gamma_{2}}}y|^{\gamma_{1}+1}e^{-\kappa_{2}|y|^{\gamma_{2}}}(t-s)^{-\frac{d}{\gamma_{2}}}dy\nonumber\\
&=C^{(2)}(t-s)^{-\frac{d+1+\gamma_{1}}{\gamma_{2}}}\label{eqn: esti nabla K 2}
\end{align}
for some $C^{(2)}>0$.
Hence, by combining \eqref{eqn: esti nabla K 1} and \eqref{eqn: esti nabla K 2},
we have the inequality given in \eqref{eqn: bound of Dx K}.
\end{proof}

In the following proposition, we prove that
the function $K(x)=L_{\psi_{1}}(l)p_{\psi_{2}}(\cdot,s,x)$ (for $x\in\mathbb{R}^{d}$)
satisfies the H\"ormander's condition given in \eqref{eqn:Hormander's condition} with
$V=L^{q}((s,s+a), (t-s)^{\frac{q\gamma_{1}}{\gamma_{2}}-1}dt)$,
which will be used to apply Corollary \ref{cor: boundedness of singular integral}
in order to prove Theorem \ref{thm: convolution is bdd on Lp}.

\begin{proposition}\label{prop: estimation of int of Kt(x-y)-Kt(x) over outside ball}
Let $(\psi_{1},\psi_{2})\in\mathfrak{S}^{2}$.
Let $q\geq 2$, $s\geq 0$, $0<a\leq \infty$ and let $V=L^{q}((s,s+a),(t-s)^{\frac{q\gamma_{1}}{\gamma_{2}}-1}dt)$.
Suppose that $N_1,N_2>d+1+\lfloor\gamma_{1}\rfloor$.
Then there exists a constant $A>0$ such that
\begin{align}\label{eqn:Hormander's condition}
\int_{|x|\geq 2|y|}\left\|K(x-y)-K(x)\right\|_{V}dx
\leq A
\end{align}
for any $y\neq 0$, where $K(x):=K(\cdot,x):=L_{\psi_{1}}(l)p_{\psi_{2}}(\cdot,s,x)$ (for $x\in\mathbb{R}^{d}$).
\end{proposition}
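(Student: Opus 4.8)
The plan is to prove the Hörmander condition by the standard mean-value-theorem device: write the difference $K(x-y)-K(x)$ as an integral of the gradient along the segment joining $x-y$ to $x$, estimate the $V$-norm of the gradient using Lemma \ref{lem: esti of grad K}, and then integrate the resulting pointwise bound over $|x|\geq 2|y|$. The key observation is that for $|x|\geq 2|y|$ and $\theta\in[0,1]$ we have $|x-\theta y|\geq |x|/2$, so the bound coming from Lemma \ref{lem: esti of grad K} can be transferred from $x-\theta y$ to $x$ up to a constant.

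\textbf{Step 1 (reduction to the gradient).} By the fundamental theorem of calculus applied to each coordinate of the vector-valued kernel $t\mapsto K(t,x)$, for $|x|\geq 2|y|$ we have
\[
\|K(x-y)-K(x)\|_{V}\leq |y|\int_{0}^{1}\bigl\|\,|\nabla K|(\cdot,x-\theta y)\,\bigr\|_{V}\,d\theta,
\]
where $|\nabla K|(t,x)=|\nabla L_{\psi_{1}}(l)p_{\psi_{2}}(t,s,x)|$ and the $V$-norm is taken in the $t$-variable. Since $|x-\theta y|\geq |x|/2\geq |y|>0$ for all $\theta\in[0,1]$, the pointwise bound of Lemma \ref{lem: esti of grad K} applies at the point $x-\theta y$.

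\textbf{Step 2 (computing the $V$-norm of the gradient bound).} By Lemma \ref{lem: esti of grad K}, for each fixed $z$ with $|z|>0$,
\[
\|\,|\nabla K|(\cdot,z)\,\|_{V}^{q}
=\int_{s}^{s+a}(t-s)^{\frac{q\gamma_{1}}{\gamma_{2}}-1}|\nabla L_{\psi_{1}}(l)p_{\psi_{2}}(t,s,z)|^{q}\,dt
\leq C^{q}\int_{0}^{\infty}u^{\frac{q\gamma_{1}}{\gamma_{2}}-1}\Bigl(|z|^{-(d+1+\gamma_{1})}\wedge u^{-\frac{d+1+\gamma_{1}}{\gamma_{2}}}\Bigr)^{q}\,du,
\]
after substituting $u=t-s$ and enlarging the interval to $(0,\infty)$. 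Splitting the integral at the crossover point $u=|z|^{\gamma_{2}}$, the first piece ($u<|z|^{\gamma_{2}}$, where the minimum is $|z|^{-(d+1+\gamma_{1})}$) contributes a constant times $|z|^{-q(d+1+\gamma_{1})}\cdot|z|^{q\gamma_{1}}=|z|^{-q(d+1)}$ provided $\gamma_{1}>0$ (so the $u$-power exponent $\frac{q\gamma_{1}}{\gamma_{2}}$ is positive), and the second piece ($u\geq |z|^{\gamma_{2}}$, where the minimum is $u^{-(d+1+\gamma_{1})/\gamma_{2}}$) contributes a constant times $|z|^{\gamma_{2}(\frac{q\gamma_{1}}{\gamma_{2}}-\frac{q(d+1+\gamma_{1})}{\gamma_{2}})}=|z|^{-q(d+1)}$, since the exponent $\frac{q\gamma_{1}}{\gamma_{2}}-\frac{q(d+1+\gamma_{1})}{\gamma_{2}}=-\frac{q(d+1)}{\gamma_{2}}<0$ makes the integral over $(|z|^{\gamma_{2}},\infty)$ converge. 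Hence $\|\,|\nabla K|(\cdot,z)\,\|_{V}\leq C'|z|^{-(d+1)}$ for all $z\neq 0$, with $C'$ independent of $l,s$. (Here one must check the convergence of both $u$-integrals, which is where the hypothesis $\gamma_1,\gamma_2>0$ and the exact choice of weight exponent $\frac{q\gamma_1}{\gamma_2}-1$ are used; note no upper bound on $a$ is needed since the integrand decays.)

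\textbf{Step 3 (integration over the exterior ball).} Combining Steps 1 and 2 with $|x-\theta y|\geq |x|/2$,
\[
\|K(x-y)-K(x)\|_{V}\leq |y|\int_{0}^{1}C'|x-\theta y|^{-(d+1)}\,d\theta\leq 2^{d+1}C'\,|y|\,|x|^{-(d+1)},
\]
and therefore
\[
\int_{|x|\geq 2|y|}\|K(x-y)-K(x)\|_{V}\,dx\leq 2^{d+1}C'|y|\int_{|x|\geq 2|y|}|x|^{-(d+1)}\,dx
=2^{d+1}C'|y|\cdot\sigma(\partial B_{1})\int_{2|y|}^{\infty}r^{-2}\,dr=2^{d}C'\sigma(\partial B_{1})=:A,
\]
which is the desired bound, independent of $y\neq 0$ (and of $l,s$).

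\textbf{Main obstacle.} The only genuinely delicate point is Step 2: one must verify that the weight exponent $\frac{q\gamma_{1}}{\gamma_{2}}-1$ in the definition of $V$ conspires with the two regimes of the gradient bound so that \emph{both} the integral near $u=0$ and the integral near $u=\infty$ converge, and that the resulting power of $|z|$ is exactly $-(d+1)$ — precisely the homogeneity that makes the final exterior-ball integral in Step 3 finite. Everything else (the mean value reduction, the geometric inequality $|x-\theta y|\geq|x|/2$, and the final radial integration) is routine.
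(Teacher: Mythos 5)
Your proposal is correct and follows essentially the same route as the paper's proof: reduce the difference to the gradient, invoke Lemma \ref{lem: esti of grad K}, split the $t$-integral at the crossover $t-s=|z|^{\gamma_{2}}$ to get the homogeneity $|z|^{-(d+1)}$, and integrate over $\{|x|\ge 2|y|\}$. A minor technical advantage of your version is that you use the integral form of the fundamental theorem of calculus together with Minkowski's integral inequality and the bound $|x-\theta y|\ge |x|/2$, whereas the paper invokes the pointwise mean value theorem (with $\theta$ implicitly depending on $t$) and then performs a change of variables $z=x-\theta y$ that is only clean if $\theta$ is constant; your treatment sidesteps that subtlety.
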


\begin{proof}
By applying the mean value theorem and Cauchy-Schwarz inequality,
for any $a>0$, we obtain that
\begin{align}
\left\|K(x-y)-K(x)\right\|_{V}^{q}
&=\int_{s}^{s+a}|K(t,x-y)-K(t,x)|^{q}
      (t-s)^{\frac{q\gamma_{1}}{\gamma_{2}}-1}dt\nonumber\\
&=\int_{s}^{s+a}|\nabla K(t,x-\theta y)\cdot (-y)|^{q}
      (t-s)^{\frac{q\gamma_{1}}{\gamma_{2}}-1}dt\nonumber\\
&\leq |y|^{q}\int_{s}^{s+a}|\nabla K(t,x-\theta y)|^{q}(t-s)^{\frac{q\gamma_{1}}{\gamma_{2}}-1}dt\nonumber\\
&=|y|^{q}\int_{s}^{s+|x-\theta y|^{\gamma_{2}}}|\nabla K(t,x-\theta y)|^{q}(t-s)^{\frac{q\gamma_{1}}{\gamma_{2}}-1}dt\nonumber\\
&\qquad+|y|^{q}\int_{s+|x-\theta y|^{\gamma_{2}}}^{\infty}|\nabla K(t,x-\theta y)|^{q}(t-s)^{\frac{q\gamma_{1}}{\gamma_{2}}-1}dt
  \label{eqn: norm of K(x-y)-K(x)}
\end{align}
for some $\theta\in (0,1)$.
By assumption $N_{1},N_{2}>d+1+\lfloor\gamma_{1}\rfloor$,
and then by Lemma \ref{lem: esti of grad K}, there exists a constant $C_{1}>0$ such that
\begin{align*}
|\nabla K(t,x-\theta y)|
&=\left|\nabla L_{\psi_{1}}(l)p_{\psi_{2}}(t,s,x-\theta y)\right|\\
&\leq C_{1} \left(|x-\theta y|^{-(d+1+\gamma_{1})}\wedge (t-s)^{-\frac{d+1+\gamma_{1}}{\gamma_{2}}}\right),
\end{align*}
from which for $t-s<|x-\theta y|^{\gamma_{2}}$, it holds that
\begin{equation}\label{eqn: gradient of K 1}
|\nabla K(t,x-\theta y)|
\leq C_{1} |x-\theta y|^{-(d+1+\gamma_{1})}
\end{equation}
and for $t-s\geq |x-\theta y|^{\gamma_{2}}$,
\begin{equation}\label{eqn: gradient of K 2}
|\nabla K(t,x-\theta y)|
\leq C_{1} (t-s)^{-\frac{d+1+\gamma_{1}}{\gamma_{2}}}.
\end{equation}
Therefore, for any $a>0$,
by combining \eqref{eqn: norm of K(x-y)-K(x)}, \eqref{eqn: gradient of K 1} and \eqref{eqn: gradient of K 2},
we obtain that
\begin{align}\label{eqn: int 0 infty nabla K}
\left\|K(x-y)-K(x)\right\|_{V}^{q}
&\leq C_{1}^{q} |y|^{q}\int_{s}^{s+|x-\theta y|^{\gamma_{2}}}
      |x-\theta y|^{-q(d+1+\gamma_{1})}(t-s)^{\frac{q\gamma_{1}}{\gamma_{2}}-1} dt\nonumber\\
&\quad+C_{1}^{q} |y|^{q} \int_{s+|x-\theta y|^{\gamma_{2}}}^{\infty}
       (t-s)^{-\frac{q(d+1+\gamma_{1})}{\gamma_{2}}}(t-s)^{\frac{q\gamma_{1}}{\gamma_{2}}-1}dt\nonumber\\
&= \frac{C_{1}^{q}\gamma_{2} }{q\gamma_{1}}|y|^{q} |x-\theta y|^{-q(d+1)}
   +\frac{C_{1}^{q}\gamma_{2}}{q(d+1)}|y|^{q}|x-\theta y|^{-q(d+1)}\nonumber\\
&=C_{2}|y|^{q}|x-\theta y|^{-q(d+1)},\quad C_{2}=\frac{C_{1}^{q}\gamma_{2}}{q}\left(\frac{1}{\gamma_{1} }+\frac{1}{(d+1)}\right).
\end{align}
By combining \eqref{eqn: norm of K(x-y)-K(x)} and \eqref{eqn: int 0 infty nabla K}
and changing variable by $x-\theta y=z$, we obtain that
\begin{align}
\int_{|x|\geq 2|y|}\left\|K(x-y)-K(x)\right\|_{V}dx
&\leq C_{2}^{\frac{1}{q}}|y|\int_{|x|\geq 2|y|}|x-\theta y|^{-d-1}dx\nonumber\\
&=C_{2}^{\frac{1}{q}}|y|\int_{|z|\geq (2-\theta)|y|}|z|^{-d-1}dz\nonumber\\
&\le C_{2}^{\frac{1}{q}}|y|\int_{|z|\geq |y|}|z|^{-d-1}dz.
    \label{eqn: integral of norm of K(x-y)-K(x)}
\end{align}
Let $\partial B_{1}$ be the unit sphere in $\mathbb{R}^{d}$ and let $\sigma$ be the surface measure on $\partial B_{1}$.
By changing variable by the polar coordinate, we have
\begin{align}\label{eqn: int |x-theta y|^(-d-1)}
\int_{|z|\geq |y|}|z|^{-d-1}dz
=\sigma(\partial B_{1})\int_{|y|}^{\infty}\rho^{-d-1}\rho^{d-1}d\rho
=\frac{2\pi^{d/2}}{\Gamma\left(\frac{d}{2}\right)}\int_{|y|}^{\infty}\rho^{-2} d\rho
\leq\frac{2\pi^{d/2}}{\Gamma\left(\frac{d}{2}\right)}|y|^{-1}.
\end{align}
Therefore, by combining \eqref{eqn: integral of norm of K(x-y)-K(x)} and \eqref{eqn: int |x-theta y|^(-d-1)}, we have
\begin{align*}
\int_{|x|\geq 2|y|}\left\|K(x-y)-K(x)\right\|_{V}dx
\leq C_{2}^{\frac{1}{q}}\frac{2\pi^{d/2}}{\Gamma\left(\frac{d}{2}\right)}.
\end{align*}
By taking $A=C_{2}^{\frac{1}{q}}\frac{2\pi^{d/2}}{\Gamma\left(\frac{d}{2}\right)}$, we get the desired result.
\end{proof}

\section{Convolution Operators on Besov Space}\label{sec:Convolution Operators}

We recall the Littlewood-Paley decomposition (see \eqref{eqn: LP decomposition}).
It is well known that there exists a function $\Phi\in C_{c}^{\infty}(\mathbb{R}^{d})$
such that $\Phi$ is nonnegative and
\begin{align}
&\text{supp}\, \Phi =\left\{\xi\in\mathbb{R}^{d}\,\left|\, {1}/{2}\leq |\xi|\leq 2\right.\right\},\nonumber\\
&\sum_{j=-\infty}^{\infty}\Phi(2^{-j}\xi)=1\label{eqn: sum of phi is one}
\end{align}
(see, e.g., Lemma 6.1.7 in \cite{Bergh 1976}).
For each $f\in L^{2}(\mathbb{R}^{d})$ and $j\in\mathbb{Z}$, we put
\begin{align*}
\Delta_{j}f(x)=\mathcal{F}^{-1}(\Phi(2^{-j}\xi)\mathcal{F}f(\xi))(x)
\end{align*}
and
\[
S_{0}(f)(x)=\sum_{j=-\infty}^{0}\Delta_{j}f(x).
\]
Then from \eqref{eqn: sum of phi is one}, we have
\begin{equation}\label{eqn: LP decomposition}
f(x)=S_{0}(f)(x)+\sum_{j=1}^{\infty}\Delta_{j}f(x),
\end{equation}
which is referred to as the Littlewood-Paley decomposition of $f$.
The following property is known as the almost orthogonality: for any $i,j$ with $|i-j|\geq 2$,
\[
\Delta_{i}\Delta_{j}=0,
\]
from which, we have the following equality:
\begin{align}\label{eqn: pseudo orthogonal property}
\left(L_{\psi_{1}}(l)p_{\psi_{2}}(t,s,\cdot)\right)*f
&=\sum_{j=-\infty}^{1}\left(L_{\psi_{1}}(l)p_{\psi_{2},j}(t,s,\cdot)\right)*S_{0}(f)\nonumber\\
&\qquad   +\sum_{j=1}^{\infty}\sum_{j-1\leq i\leq j+1}\left(L_{\psi_{1}}(l)p_{\psi_2,i}(t,s,\cdot)\right)*f_{j},
\end{align}
where $f_{i}=\Delta_{i}f$ and $p_{\psi_{2},j}(t,s,x)=\Delta_{ j}p_{\psi_{2}}(t,s,x)$.

\begin{lemma}\label{lem: esti 1}
Let $(\psi_{1},\psi_{2})\in\mathfrak{S}^{2}$, $N=\min\{N_{1},N_{2}\}$ and let $\alpha\in\mathbb{N}_{0}^{d}$ be a multi index with $|\alpha|\leq N$. Then there exist positive constants $c,C$ depending on $\mu_{1},\boldsymbol{\gamma},\kappa_{2}, d$ and $N$ such that
for any $l\geq 0$, $j\in\mathbb{Z}$ and $t>s$,
\begin{align*}
\left|\partial_{\xi}^{\alpha}\left[\psi_{1}(l,2^{j}\xi)\Phi(\xi)\exp\left(\int_{s}^{t}\psi_{2}(r,2^{j}\xi)dr\right)\right]\right|
\leq C2^{j\gamma_{1}}|\xi|^{\gamma-|\alpha|}1_{(\frac{1}{2},2)}(|\xi|)e^{-c(t-s)|2^{j}\xi|^{\gamma_{2}}}.
\end{align*}
\end{lemma}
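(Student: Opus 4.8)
The plan is to expand $\partial_\xi^\alpha$ by the Leibniz rule across the three factors
\[
A(\xi)=\psi_{1}(l,2^{j}\xi),\qquad B(\xi)=\Phi(\xi),\qquad C(\xi)=\exp\left(\int_{s}^{t}\psi_{2}(r,2^{j}\xi)\,dr\right),
\]
to estimate each factor separately, and then to use that the product is supported in the annulus $\{1/2\le|\xi|\le2\}$, where every power of $|\xi|$ is bounded above and below by an absolute constant. The one elementary ingredient is the dyadic scaling identity for higher-order derivatives, $\partial_\xi^{\beta}\bigl[g(2^{j}\cdot)\bigr](\xi)=2^{j|\beta|}(\partial^{\beta}g)(2^{j}\xi)$, which I would apply to $\psi_1$ and to the exponential factor.

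Concretely, I would write $\partial_\xi^{\alpha}(ABC)=\sum_{\beta_{1}+\beta_{2}+\beta_{3}=\alpha}\frac{\alpha!}{\beta_{1}!\beta_{2}!\beta_{3}!}(\partial_\xi^{\beta_{1}}A)(\partial_\xi^{\beta_{2}}B)(\partial_\xi^{\beta_{3}}C)$, where each $|\beta_{i}|\le|\alpha|\le N=\min\{N_{1},N_{2}\}$. For the first factor, the scaling identity together with \textbf{(S2)} for $\psi_{1}$ gives
\[
|\partial_\xi^{\beta_{1}}A(\xi)|=2^{j|\beta_{1}|}\bigl|(\partial_\xi^{\beta_{1}}\psi_{1})(l,2^{j}\xi)\bigr|\le 2^{j|\beta_{1}|}\mu_{1}|2^{j}\xi|^{\gamma_{1}-|\beta_{1}|}=\mu_{1}\,2^{j\gamma_{1}}\,|\xi|^{\gamma_{1}-|\beta_{1}|},
\]
so the powers of $2^{j}$ collapse to the single factor $2^{j\gamma_{1}}$ independently of $\beta_{1}$. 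For the second factor, $\partial_\xi^{\beta_{2}}\Phi$ is smooth with support in $\{1/2\le|\xi|\le2\}$, hence $|\partial_\xi^{\beta_{2}}B(\xi)|\le C_{\beta_{2}}\,1_{(1/2,2)}(|\xi|)$. For the third factor, I would invoke the bound \eqref{eqn:psi 2 ineq-2} obtained in the proof of Lemma \ref{lem: esti of partial psi 1 psi 2}, evaluated at the scaled point $2^{j}\xi$, and combine it once more with the scaling identity:
\[
|\partial_\xi^{\beta_{3}}C(\xi)|=2^{j|\beta_{3}|}\left|\Bigl(\partial_\xi^{\beta_{3}}\exp\bigl(\textstyle\int_{s}^{t}\psi_{2}(r,\cdot)\,dr\bigr)\Bigr)(2^{j}\xi)\right|\le 2^{j|\beta_{3}|}D'\,|2^{j}\xi|^{-|\beta_{3}|}e^{-c(t-s)|2^{j}\xi|^{\gamma_{2}}}=D'\,|\xi|^{-|\beta_{3}|}e^{-c(t-s)|2^{j}\xi|^{\gamma_{2}}},
\]
the point being that the decay constant $c$ is unaffected by the substitution $\xi\mapsto2^{j}\xi$.

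Multiplying the three estimates, each Leibniz term is dominated by $C\,2^{j\gamma_{1}}\,|\xi|^{\gamma_{1}-|\beta_{1}|-|\beta_{3}|}\,1_{(1/2,2)}(|\xi|)\,e^{-c(t-s)|2^{j}\xi|^{\gamma_{2}}}$, and since $1/2\le|\xi|\le2$ on $\operatorname{supp}\Phi$ the factor $|\xi|^{\gamma_{1}-|\beta_{1}|-|\beta_{3}|}$ is comparable to $|\xi|^{\gamma_{1}-|\alpha|}$ up to a constant depending only on $\gamma_{1}$ and $|\alpha|$; summing the finitely many terms then yields the claimed inequality. I do not expect a genuine obstacle here: the statement is a dyadically rescaled version of Lemma \ref{lem: esti of partial psi 1 psi 2} with the extra cutoff $\Phi$ inserted, and the only thing that needs care is checking that the $2^{j|\beta_{1}|}$ and $2^{j|\beta_{3}|}$ factors produced by the chain rule cancel exactly against the $2^{j(\gamma_{1}-|\beta_{1}|)}$ and $2^{-j|\beta_{3}|}$ factors coming from \textbf{(S2)} and \eqref{eqn:psi 2 ineq-2}, leaving the clean power $2^{j\gamma_{1}}$.
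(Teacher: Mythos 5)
Your proposal is correct and essentially reproduces the paper's argument. The paper first combines Lemma \ref{lem: esti of partial psi 1 psi 2} with the chain rule to get $\left|\partial_\xi^\alpha\left[\psi_1(l,2^j\xi)\exp\left(\int_s^t\psi_2(r,2^j\xi)\,dr\right)\right]\right|\le C_12^{j\gamma_1}|\xi|^{\gamma_1-|\alpha|}e^{-c_1(t-s)|2^j\xi|^{\gamma_2}}$ and then applies the Leibniz rule once more against the smooth, compactly supported factor $\Phi$, whereas you expand directly by the three-factor Leibniz rule; the underlying ingredients (the dyadic scaling identity, the bound $\sup_\xi|\partial_\xi^\beta\Phi(\xi)|\le C_\beta$ with support in the annulus, and the fact that all powers of $|\xi|$ are comparable on $\{1/2\le|\xi|\le2\}$) are identical.
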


\begin{proof}
By applying Lemma \ref{lem: esti of partial psi 1 psi 2} and the chain rule, we have
\begin{align}
\left|\partial_{\xi}^{\alpha}\left[\psi_{1}(l,2^{j}\xi)\exp\left(\int_{s}^{t}\psi_{2}(r,2^{j}\xi)dr\right)\right]\right|
\leq C_{1}2^{j\gamma_{1}}|\xi|^{\gamma_{1} -|\alpha|}e^{-c_{1}(t-s)|2^{j}\xi|^{\gamma_{2}}}\label{eqn: first ineq}
\end{align}
for some constant $c_{1},C_{1}>0$.
Recall that the support of $\Phi(\xi)$ is $\{\xi\in\mathbb{R}^{d}\,|\, \frac{1}{2}\leq |\xi|\leq 2\}$ and
\begin{equation}
\sup_{\xi}|\partial_{\xi}^{\alpha}\Phi(\xi)|
\leq C_{\alpha}.\label{eqn: third ineq}
\end{equation}
Hence, by \eqref{eqn: first ineq}, \eqref{eqn: third ineq} and Leibniz rule, we have
\begin{align*}
\left|\partial_{\xi}^{\alpha}\left[\psi_{1}(l,2^{j}\xi)\Phi(\xi)\exp\left(\int_{s}^{t}\psi_{2}(r,2^{j}\xi)dr\right)\right]\right|
\leq C2^{j\gamma_{1}}|\xi|^{\gamma_{1}-|\alpha|}1_{(\frac{1}{2},2)}(|\xi|)e^{-c_{1}(t-s)|2^{j}\xi|^{\gamma_{2}}}
\end{align*}
for some constant $C>0$.
The proof is complete.
\end{proof}

\begin{lemma}\label{lem: boundedness of L psi of p j}
Let $(\psi_{1},\psi_{2})\in\mathfrak{S}^{2}$.
There exist constants $c,C>0$ depending on $ \boldsymbol{\kappa},\boldsymbol{\mu}$ and $d$ such that
for any $l\geq 0$, $j\in\mathbb{Z}$ and $t>s$,
\[
\|L_{\psi_{1}}(l)p_{\psi_{2},j}(t,s,\cdot)\|_{L^{1}(\mathbb{R}^{d})}
\leq C2^{j\gamma_{1}}e^{-c(t-s)2^{j\gamma_{2}}}.
\]
\end{lemma}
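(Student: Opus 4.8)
The plan is to remove the dyadic dilation $\Phi(2^{-j}\cdot)$ by a rescaling in frequency, reducing the bound to a single uniform $L^{1}$ estimate on a fixed annulus, and then to apply Corollary \ref{cor: integrability bounded interval}(b). Fix $l\ge0$, $j\in\mathbb{Z}$, $t>s$. Since $\mathcal{F}(p_{\psi_{2}}(t,s,\cdot))(\xi)=\exp\left(\int_{s}^{t}\psi_{2}(r,\xi)dr\right)$, the definitions of $\Delta_{j}$ and $L_{\psi_{1}}$ give
\[
L_{\psi_{1}}(l)p_{\psi_{2},j}(t,s,x)=\mathcal{F}^{-1}(m_{j})(x),\qquad
m_{j}(\xi)=\psi_{1}(l,\xi)\,\Phi(2^{-j}\xi)\exp\left(\int_{s}^{t}\psi_{2}(r,\xi)dr\right),
\]
and $m_{j}$ is bounded with support in $\{2^{j-1}\le|\xi|\le2^{j+1}\}$, hence integrable. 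The substitution $\xi=2^{j}\zeta$ yields $\mathcal{F}^{-1}(m_{j})(x)=2^{jd}\mathcal{F}^{-1}(h_{j})(2^{j}x)$ with
\[
h_{j}(\zeta)=\psi_{1}(l,2^{j}\zeta)\,\Phi(\zeta)\exp\left(\int_{s}^{t}\psi_{2}(r,2^{j}\zeta)dr\right),
\]
and then the change of variables $y=2^{j}x$ shows $\|L_{\psi_{1}}(l)p_{\psi_{2},j}(t,s,\cdot)\|_{L^{1}(\mathbb{R}^{d})}=\|\mathcal{F}^{-1}(h_{j})\|_{L^{1}(\mathbb{R}^{d})}$. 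Since $h_{j}$ is now supported in the fixed annulus $\{\tfrac12\le|\zeta|\le2\}$ and lies in $C^{N}(\mathbb{R}^{d}\setminus\widetilde{\boldsymbol{0}})$ by \textbf{(S2)} and the smoothness of $\Phi$, it remains only to show $\|\mathcal{F}^{-1}(h_{j})\|_{L^{1}(\mathbb{R}^{d})}\le C\,2^{j\gamma_{1}}e^{-c(t-s)2^{j\gamma_{2}}}$.

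Next I would extract the decay from Lemma \ref{lem: esti 1}, which is exactly an estimate for $\partial_{\xi}^{\alpha}h_{j}$: for every $\alpha\in\mathbb{N}_{0}^{d}$ with $|\alpha|\le N$ and $\xi\in\mathbb{R}^{d}\setminus\widetilde{\boldsymbol{0}}$,
\[
|\partial_{\xi}^{\alpha}h_{j}(\xi)|\le C_{\alpha}\,2^{j\gamma_{1}}\,|\xi|^{\gamma_{1}-|\alpha|}\,1_{(\frac{1}{2},2)}(|\xi|)\,e^{-c_{1}(t-s)|2^{j}\xi|^{\gamma_{2}}} .
\]
On $\{\tfrac12\le|\xi|\le2\}$ one has $|2^{j}\xi|^{\gamma_{2}}\ge2^{-\gamma_{2}}2^{j\gamma_{2}}$, so with $c:=c_{1}2^{-\gamma_{2}}$ and $\Lambda_{j}:=2^{j\gamma_{1}}e^{-c(t-s)2^{j\gamma_{2}}}$ the normalised function $\widetilde{h}_{j}:=\Lambda_{j}^{-1}h_{j}$ satisfies $|\partial_{\xi}^{\alpha}\widetilde{h}_{j}(\xi)|\le C_{\alpha}|\xi|^{\gamma_{1}-|\alpha|}1_{(\frac{1}{2},2)}(|\xi|)$ for $|\alpha|\le N$, with all $C_{\alpha}$ independent of $l,j,t,s$; in particular $\|\widetilde{h}_{j}\|_{L^{2}(\mathbb{R}^{d})}\le C_{5}$ for an absolute $C_{5}$ because the annulus has finite measure. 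I would then apply Corollary \ref{cor: integrability bounded interval}(b) to $h=\widetilde{h}_{j}$ with $\zeta=\gamma_{1}$, $a=\tfrac14$, $b=3$, $f=1_{[0,3)}$ (which lies in $\boldsymbol{F}_{\eta,\zeta}$, as in the proof of that corollary), and $\eta=\tfrac{d+\epsilon}{2}$ for a sufficiently small $\epsilon>0$ chosen exactly as in the proof of Corollary \ref{coro:Int of F of S}, so that all derivative orders required by the corollary do not exceed $N$. Since $f(|\xi|)1_{(a,b)}(|\xi|)=1$ on $\{\tfrac12\le|\xi|\le2\}$ and $\widetilde{h}_{j}$ is supported there, the bounds above for the derivatives of $\widetilde{h}_{j}$ verify \eqref{eqn: condition 2-2-1} and the applicable one of \eqref{eqn: condition 3-2-1}, \eqref{eqn: condition 4-2-1} with constants independent of $l,j,t,s$, and $\|\widetilde{h}_{j}\|_{L^{2}(\mathbb{R}^{d})}\le C_{5}=C_{5}f(a)$. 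Hence the corollary gives $\|\mathcal{F}^{-1}(\widetilde{h}_{j})\|_{L^{1}(\mathbb{R}^{d})}\le C_{6}f(a)=C_{6}$, and multiplying by $\Lambda_{j}$ and using the identity from the first step yields the claim with $C=C_{6}$.

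The two changes of variables and the passage from Lemma \ref{lem: esti 1} to the bounds for $\widetilde{h}_{j}$ are routine. The point that needs care—and the main, though mild, obstacle—is the choice of $\eta$ when applying Corollary \ref{cor: integrability bounded interval}: one must pick $\eta>\tfrac d2$ with $\zeta=\gamma_{1}>\eta-\tfrac d2$ while keeping $1+\lfloor\eta\rfloor$ and the derivative orders $2\lfloor\tfrac\eta2\rfloor$, $2\lfloor\tfrac\eta2\rfloor+1$, $2\lfloor\tfrac\eta2\rfloor+2$ entering that corollary's hypotheses all at most $N$, since Lemma \ref{lem: esti 1} controls derivatives only up to order $N$ and here one only assumes $N\ge\lfloor\tfrac d2\rfloor+1$ (no condition $N>d+1+\lfloor\gamma_{1}\rfloor$ is imposed). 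This is precisely the bookkeeping already carried out in the proof of Corollary \ref{coro:Int of F of S}: with $\eta=\tfrac{d+\epsilon}{2}$ and $\epsilon$ small one has $1+\lfloor\eta\rfloor=\lfloor\tfrac d2\rfloor+1$, the relevant derivative order always equals $\lfloor\tfrac d2\rfloor+1\le N$, and $\zeta>\eta-\tfrac d2$ reduces to $\gamma_{1}>\epsilon/2$, which holds for $\epsilon$ small. So the argument transfers without new difficulty.
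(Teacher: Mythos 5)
Your proposal is correct and follows the same route as the paper's proof: rescale to a fixed annulus via $\xi = 2^{j}\zeta$, invoke Lemma~\ref{lem: esti 1} for derivative bounds on $h_{j}$, and then apply Corollary~\ref{cor: integrability bounded interval}(b) with $\eta = \frac{d+\epsilon}{2}$ chosen so that all relevant derivative orders stay at most $\lfloor\frac{d}{2}\rfloor + 1 \le N$. The only cosmetic difference is that you normalise $h_{j}$ by $\Lambda_{j} = 2^{j\gamma_{1}}e^{-c(t-s)2^{j\gamma_{2}}}$ up front and then feed the fixed indicator $f = 1_{[0,3)}$ into the corollary, whereas the paper plugs in the $(j,t,s)$-dependent function $f(u) = 2^{j\gamma_{1}}e^{-c_{1}(t-s)2^{j\gamma_{2}}u^{\gamma_{2}}}$ and lets the factor $f(a)$ be extracted inside the corollary's proof; the two bookkeeping conventions are interchangeable and yield identical constants.
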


\begin{proof}
By changing variable, we have
\begin{align*}
L_{\psi_{1}}(l)p_{\psi_{2},j}(t,s,x)
&=\mathcal{F}^{-1}\left(\psi_{1}(l,\xi)\Phi(2^{-j}\xi)\exp\left(\int_{s}^{t}\psi_{2}(r,\xi)dr\right)\right)(x)\\
&=2^{jd}\mathcal{F}^{-1}\left(h(\xi)\right),
\quad h(\xi)=\psi_{1}(l,2^{j}\xi)\Phi(\xi)\exp\left(\int_{s}^{t}\psi_{2}(r,2^{j}\xi)dr\right).
\end{align*}
By taking $\epsilon<\frac{\gamma_{1}}{4}\wedge \frac{1}{4}$ and $\eta=\frac{d+\epsilon}{2}$,
we see that $2\lfloor\frac{\eta}{2}\rfloor \leq \lfloor\frac{d}{2}\rfloor$,
which implies that
\begin{align*}
2\lfloor\frac{\eta}{2}\rfloor
&\leq 1+2\lfloor\frac{\eta}{2}\rfloor
\leq \lfloor\frac{d}{2}\rfloor+1,
\end{align*}
and if $1\leq \eta-2\lfloor\frac{\eta}{2}\rfloor$, then it holds that $1+2\lfloor\frac{\eta}{2}\rfloor\leq \lfloor\frac{d}{2}\rfloor$.
By assumption of $\mathfrak{S}$, $N_{1},N_{2}\geq \lfloor\frac{d}{2}\rfloor+1$.
Therefore, by Lemma \ref{lem: esti 1} and direct computation, we obtain that
\begin{align*}
\left|(-\Delta)^{\lfloor\frac{\eta}{2}\rfloor}h(\xi)\right|
&\leq C_{1}2^{j\gamma_{1}}|\xi|^{\gamma_{1}-2\lfloor\frac{\eta}{2}\rfloor}
1_{(\frac{1}{2},2)}(|\xi|)e^{-c_{1}(t-s)|2^{j}\xi|^{\gamma_{2}}},\\
\left|\nabla(-\Delta)^{\lfloor\frac{\eta}{2}\rfloor}h(\xi)\right|
&\leq C_{2}2^{j\gamma_{1}}|\xi|^{\gamma_{1}-2\lfloor\frac{\eta}{2}\rfloor-1}
1_{(\frac{1}{2},2)}(|\xi|)e^{-c_{1}(t-s)|2^{j}\xi|^{\gamma_{2}}},\\
\left|\nabla\left(\nabla(-\Delta)^{\lfloor\frac{\eta}{2}\rfloor}h\right)(\xi)\right|
&\leq C_{3}2^{j\gamma_{1}}|\xi|^{\gamma_{1}-2\lfloor\frac{\eta}{2}\rfloor-2}
1_{(\frac{1}{2},2)}(|\xi|)e^{-c_{1}(t-s)|2^{j}\xi|^{\gamma_{2}}}
\end{align*}
for some constants $C_{1},C_{2},C_{3}, c_{1}>0$.
Also, by Lemma \ref{lem: esti 1} with $|\alpha|=0$, we obtain that
\begin{align*}
\|h\|_{L^{2}(\mathbb{R}^{d})}^2
&=\int_{\mathbb{R}^{d}}|h(\xi)|^{2}d\xi\\
&\leq \int_{\mathbb{R}^{d}}C^{2}2^{2j\gamma_{1}}|\xi|^{2\gamma_{1}}1_{(\frac{1}{2},2)}(|\xi|)
         e^{-2c(t-s)|2^{j}\xi|^{\gamma_{2}}}d\xi\\
&\leq C^{2}2^{2j\gamma_{1}}e^{-\frac{2c}{2^{\gamma_{2}}}(t-s)2^{j\gamma_{2}}}
         \int_{\mathbb{R}^{d}}|\xi|^{2\gamma_{1}}1_{(\frac{1}{2},2)}(|\xi|)d\xi\\
&=C_{4}2^{2j\gamma_{1}}e^{-\frac{2c}{2^{\gamma_{2}}}(t-s)2^{j\gamma_{2}}}
\end{align*}
for some constant $C_{4}>0$. Since $\eta=\frac{d+\epsilon}{2}>\frac{d}{2}$, by applying (b) in Corollary \ref{cor: integrability bounded interval} with $f(u)=2^{j\gamma_{1}}e^{-c_{1}(t-s)2^{j\gamma_{2}}u^{\gamma_{2}}}$,
there exists a constant $C_{5}>0$ such that
\begin{align*}
\|L_{\psi_{1}}(l)p_{\psi_{2}}(t,s,\cdot)\|_{L^{1}(\mathbb{R}^{d})}
&=\left\|2^{jd}\mathcal{F}^{-1}\left(h(\xi)\right)(2^{j}\cdot)\right\|_{L^{1}(\mathbb{R}^{d})}
=\left\|\mathcal{F}^{-1}\left(h(\xi)\right)\right\|_{L^{1}(\mathbb{R}^{d})}\\
&\leq C_{5}f\left(\frac{1}{2}\right)
=C_{5}2^{j\gamma_{1}}e^{-\frac{c_{1}}{2^{\gamma_{2}}}(t-s)2^{j\gamma_{2}}}.
\end{align*}
By taking $C=C_{5}$ and $c=\frac{c_{1}}{2^{\gamma_{2}}}$, the proof is complete.
\end{proof}
By considering the Littlewood-Paley decomposition of $f$ given in \eqref{eqn: LP decomposition},
for each $p\geq 1$ and $\alpha\in\mathbb{R}$, let $B_{pp}^{\alpha}(\mathbb{R}^{d})$ be the Besov space with the norm:
\[
\|f\|_{B_{pp}^{\alpha}(\mathbb{R}^{d})}=\|S_{0}(f)\|_{L^{p}(\mathbb{R}^{d})}+\left(\sum_{j=1}^{\infty}2^{j\alpha p}\|\Delta_{j}f\|_{L^{p}(\mathbb{R}^{d})}^{p}\right)^{\frac{1}{p}}.
\]
Also, we denote by $H_{p}^{\alpha}(\mathbb{R}^{d})$ the Sobolev space with the norm:
\[
\|f\|_{H_{p}^{\alpha}(\mathbb{R}^{d})}=\|(1-\Delta)^{\frac{\alpha}{2}}f\|_{L^{p}(\mathbb{R}^{d})}.
\]
It is well known that if $p\geq 2$ and $\alpha\in\mathbb{R}$, then the Sobolev space $H_{p}^{\alpha}(\mathbb{R}^{d})$ is continuously embedded into the Besov space $B_{pp}^{\alpha}(\mathbb{R}^{d})$, i.e., it holds that
\begin{equation}\label{eqn: Hps embedded Bpps}
\|f\|_{B_{pp}^{\alpha}(\mathbb{R}^{d})}\leq C\|f\|_{H_{p}^{\alpha}(\mathbb{R}^{d})}
\end{equation}
for some constant $C>0$ (see, e.g., Theorem 6.4.4 in \cite{Bergh 1976}).

\begin{theorem}\label{lem: LP ineq for p equal lambda fixed s}
Let $(\psi_{1},\psi_{2})\in\mathfrak{S}^{2}$ and let $0< a< \infty$.
Then for any $q\geq 1$ and $f\in B_{qq}^{0}(\mathbb{R}^{d})$,
there exists a constant $C$ depending on $a,q,\boldsymbol{\gamma}, \boldsymbol{\mu}, \boldsymbol{\kappa}$ and $a$ such that
for any $s,l\geq 0$,
\begin{align}
\int_{\mathbb{R}^{d}}\int_{s}^{s+a}(t-s)^{\frac{q\gamma_{1}}{\gamma_{2}}-1}
          |\left(L_{\psi_{1}}(l)p_{\psi_{2}}(t,s,\cdot)\right)*f(x)|^{q}dtdx
 &\leq C\|f\|_{B_{qq}^{0}(\mathbb{R}^{d})}^{q}.
   \label{eqn:FE for LPI}
\end{align}
That is, the convolution operator $Tf=\left(L_{\psi_{1}}(l)p_{\psi_{2}}(\cdot,s,\cdot)\right)*f$ is a bounded linear operator from $B_{qq}^{0}(\mathbb{R}^{d})$ into $L^{q}(\mathbb{R}^{d};V)$ with $V=L^{q}((s,s+a), (t-s)^{\frac{q\gamma_{1}}{\gamma_{2}}-1}dt)$.
\end{theorem}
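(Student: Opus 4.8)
The plan is to establish the inequality \eqref{eqn:FE for LPI} first on the pieces of the Littlewood-Paley decomposition and then assemble the full estimate. Concretely, I would exploit the decomposition \eqref{eqn: pseudo orthogonal property}, which writes $\left(L_{\psi_{1}}(l)p_{\psi_{2}}(t,s,\cdot)\right)*f$ as a sum of two types of terms: a low-frequency part $\sum_{j\le 1}\left(L_{\psi_{1}}(l)p_{\psi_{2},j}(t,s,\cdot)\right)*S_{0}(f)$ and a high-frequency part $\sum_{j\ge1}\sum_{|i-j|\le1}\left(L_{\psi_{1}}(l)p_{\psi_{2},i}(t,s,\cdot)\right)*f_{j}$. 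For each dyadic block I would apply Young's convolution inequality in $x$ together with the crucial $L^{1}$-kernel bound from Lemma \ref{lem: boundedness of L psi of p j}, namely $\|L_{\psi_{1}}(l)p_{\psi_{2},j}(t,s,\cdot)\|_{L^{1}}\le C2^{j\gamma_{1}}e^{-c(t-s)2^{j\gamma_{2}}}$, to obtain $\|\left(L_{\psi_{1}}(l)p_{\psi_{2},i}(t,s,\cdot)\right)*g\|_{L^{q}(\mathbb{R}^{d})}\le C2^{i\gamma_{1}}e^{-c(t-s)2^{i\gamma_{2}}}\|g\|_{L^{q}(\mathbb{R}^{d})}$.

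Next I would carry out the time integration. For a single block the contribution to the left side of \eqref{eqn:FE for LPI} is controlled by
\begin{align*}
\int_{s}^{s+a}(t-s)^{\frac{q\gamma_{1}}{\gamma_{2}}-1}C^{q}2^{iq\gamma_{1}}e^{-cq(t-s)2^{i\gamma_{2}}}dt\,\|g\|_{L^{q}(\mathbb{R}^{d})}^{q},
\end{align*}
and the substitution $u=(t-s)2^{i\gamma_{2}}$ turns the $t$-integral into $2^{-i\gamma_{1}q}\int_{0}^{a2^{i\gamma_{2}}}u^{\frac{q\gamma_{1}}{\gamma_{2}}-1}e^{-cqu}du$, which is bounded by a constant $C_{0}=C_{0}(a,q,\boldsymbol{\gamma})$ uniformly in $i$ because $\frac{q\gamma_{1}}{\gamma_{2}}>0$ makes the Gamma-type integral converge. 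Hence each block contributes at most $C_{0}C^{q}\|g\|_{L^{q}(\mathbb{R}^{d})}^{q}$ with the $2^{i\gamma_{1}}$ factors exactly cancelled. For the low-frequency sum over $j\le1$ applied to $g=S_{0}(f)$, the geometric gain $2^{j\gamma_{1}}$ (with $\gamma_{1}>0$) gives $\sum_{j\le1}2^{j\gamma_{1}}<\infty$, so after interchanging the ($t$-integral) and the ($j$-sum) one gets a bound $C\|S_{0}(f)\|_{L^{q}(\mathbb{R}^{d})}^{q}$. For the high-frequency double sum, I would first integrate in $t$ for fixed $i,j$ (using the computation above, which is uniform in $i$), then sum the finitely many $i$ with $|i-j|\le1$, and finally sum over $j\ge1$; the outcome is $C\sum_{j\ge1}\|\Delta_{j}f\|_{L^{q}(\mathbb{R}^{d})}^{q}$. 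Adding the two contributions yields the right side $C\|f\|_{B_{qq}^{0}(\mathbb{R}^{d})}^{q}$ by definition of the Besov norm.

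Here I must be a little careful about the order of summation and integration, and about whether $q\ge1$ suffices without $q\ge2$: the argument above uses only Young's inequality, Lemma \ref{lem: boundedness of L psi of p j}, Minkowski's inequality in $\ell^{q}/L^{q}$ (or rather the triangle inequality after raising to the $q$-th power with the elementary bound $\bigl|\sum a_{j}\bigr|^{q}\le C_{q}\sum|a_{j}|^{q}$ available only when combined with the exponential decay that makes the sum essentially finite), and the convergence of a Gamma-type integral, none of which requires $q\ge2$. The main obstacle will be bookkeeping the interchange of the $t$-integral with the (infinite) $j$-sums and justifying the elementary $\ell^{q}$-triangle step: the cleanest route is to first absorb the exponential factor to reduce each sum to an effectively geometric series in the low-frequency case and to an essentially local (three-term) sum in the high-frequency case, so that the triangle inequality in $L^{q}$ is applied only finitely often before the outer $j$-sum, which then converges by definition of $B_{qq}^{0}$. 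Once this is organized, the constant $C$ depends only on $a,q,\boldsymbol{\gamma},\boldsymbol{\mu},\boldsymbol{\kappa},d$ as claimed, and the stated boundedness of $Tf=\left(L_{\psi_{1}}(l)p_{\psi_{2}}(\cdot,s,\cdot)\right)*f$ from $B_{qq}^{0}(\mathbb{R}^{d})$ into $L^{q}(\mathbb{R}^{d};V)$ with $V=L^{q}((s,s+a),(t-s)^{\frac{q\gamma_{1}}{\gamma_{2}}-1}dt)$ follows immediately since the norm on the left of \eqref{eqn:FE for LPI} is precisely $\|Tf\|_{L^{q}(\mathbb{R}^{d};V)}^{q}$.
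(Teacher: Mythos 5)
Your proposal follows the paper's structure at the top level: the Littlewood--Paley decomposition \eqref{eqn: pseudo orthogonal property}, Young's convolution inequality against the $L^{1}$-kernel bound of Lemma~\ref{lem: boundedness of L psi of p j}, and the observation that the $t$-integral of $(t-s)^{\frac{q\gamma_{1}}{\gamma_{2}}-1}2^{iq\gamma_{1}}e^{-cq(t-s)2^{i\gamma_{2}}}$ is uniformly bounded in $i$ are exactly the right starting moves. The low-frequency piece is also handled correctly: for $j\le 1$ the factor $2^{j\gamma_{1}}$ sums geometrically, one pulls out the constant and the single function $S_{0}(f)$, and the $t$-integral is elementary. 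This matches the estimate of $A_{1}$ in the paper.

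The gap is in the high-frequency piece. You propose to \emph{first integrate in $t$ for fixed $i,j$, then sum}. But to do that you must first separate the dyadic blocks, i.e.\ apply Minkowski's inequality to $\bigl\|\sum_{j,i}\mathcal{C}_{i}(\cdot)*f_{j}\bigr\|_{L^{q}(\mathbb{R}^{d};V)}$; this yields a bound of the form
\begin{align*}
\sum_{j\ge 1}\sum_{|i-j|\le1}\bigl\|\mathcal{C}_{i}(\cdot)*f_{j}\bigr\|_{L^{q}(\mathbb{R}^{d};V)}\le C\sum_{j\ge1}\|f_{j}\|_{L^{q}(\mathbb{R}^{d})},
\end{align*}
which is the $\ell^{1}$-sum of $\|f_{j}\|_{L^{q}}$. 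For $q>1$ the Besov norm controls $\bigl(\sum_{j}\|f_{j}\|_{L^{q}}^{q}\bigr)^{1/q}$ (the $\ell^{q}$-sum), and $\ell^{q}\not\subset\ell^{1}$; so your bound would be $+\infty$ for a generic $f\in B_{qq}^{0}(\mathbb{R}^{d})$. You acknowledge this difficulty, but the proposed remedy---``absorb the exponential factor to reduce to an effectively geometric series,'' so the ``elementary bound $|\sum a_{j}|^{q}\le C_{q}\sum|a_{j}|^{q}$'' can be invoked---is not a valid argument as stated: there is no such elementary bound for infinite sums, and the repair needed is precisely a H\"{o}lder inequality in $j$ carried out \emph{before} the time integration, with a $t$-dependent choice of weight. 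This is what the paper does, and it is the crux of the proof: the $j$-sum is split into $J=\{j:(t-s)2^{j\gamma_{2}}\le1\}$ and its complement; on $J$ the weight $2^{j\gamma_{1}/2}$ is used together with the restriction $t\le s+2^{-j\gamma_{2}}$ to make the subsequent time integral produce exactly the $2^{-jq\gamma_{1}/2}$ cancellation, while on $J^{c}$ a slice of the exponential decay $e^{-c(t-s)2^{j\gamma_{2}}/q'}$ serves as the summable weight. Without this H\"{o}lder-before-integration step the argument does not close, and the claimed conclusion $A_{2}\le C\sum_{j\ge1}\|f_{j}\|_{L^{q}}^{q}$ does not follow from what you have written.
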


\begin{proof}
Let $l,s\ge0$, $q\geq 1$ and $f\in B_{qq}^{0}(\mathbb{R}^{d})$ be given.
For a notational convenience, we put
\begin{align*}
\mathcal{C}(t)(\cdot)=\left(L_{\psi_{1}}(l)p_{\psi_{2}}(t,s,\cdot)\right),
\quad
\mathcal{C}_{j}(t)(\cdot)=\left(L_{\psi_{1}}(l)p_{\psi_{2,j}}(t,s,\cdot)\right),
\quad
\left\|\cdot\right\|_{L^{q}}=\left\|\cdot\right\|_{L^{q}(\mathbb{R}^{d})}.
\end{align*}
Then by applying \eqref{eqn: pseudo orthogonal property}, Lemma \ref{lem: boundedness of L psi of p j}
and Young's convolution inequality, for any $q\ge1$, we obtain that
\begin{align*}
\int_{\mathbb{R}^{d}}|\mathcal{C}(t)*f(x)|^{q}dx
&=\int_{\mathbb{R}^{d}}
 \left|\sum_{j=-\infty}^{1}\mathcal{C}_{j}(t)*S_{0}(f)(x)
      +\sum_{j=1}^{\infty}\sum_{j-1\leq i\leq j+1}\mathcal{C}_{i}(t)*f_{j}(x)\right|^{q}dx\\
&\le \left(\sum_{j=-\infty}^{1}\left\|\mathcal{C}_{j}(t)*S_{0}(f)\right\|_{L^{q}}
   +\sum_{j=1}^{\infty}  \sum_{j-1\leq i\leq j+1}  \left\|\mathcal{C}_{i}(t)*f_{j}\right\|_{L^{q}}\right)^{q}\\
&\le 2^{q-1}\left(\sum_{j=-\infty}^{1}\left\|\mathcal{C}_{j}(t)*S_{0}(f)\right\|_{L^{q}}\right)^{q}\\
&\quad\quad
     +2^{q-1}\left(\sum_{j=1}^{\infty}\sum_{j-1\leq i\leq j+1}\left\|\mathcal{C}_{i}(t)*f_{j}\right\|_{L^{q}}\right)^{q}\\
&\le 2^{q-1}\left(\sum_{j=-\infty}^{1}\left\|\mathcal{C}_{j}(t)\right\|_{L^{1}}
     \left\|S_{0}(f)\right\|_{L^{q}}\right)^{q}\\
&\quad\quad +2^{q-1}\left(\sum_{j=1}^{\infty}\sum_{j-1\leq i\leq j+1}
         \left\|\mathcal{C}_{i}(t)\right\|_{L^{1}}
          \left\|f_{j}\right\|_{L^{q}}\right)^{q}\\
&\le 2^{q-1}C^q\left\|S_{0}(f)\right\|_{L^{q}}^q
   \left(\sum_{j=-\infty}^{1}2^{j\gamma_{1}}e^{-c (t-s)2^{j\gamma_{2}}}\right)^{q}\\
&\quad\quad
   +2^{q-1}C^q\left(\sum_{j=1}^{\infty}\sum_{j-1\leq i\leq j+1}
         2^{i\gamma_{1}}e^{-c(t-s)2^{i\gamma_{2}}}
          \left\|f_{j}\right\|_{L^{q}}\right)^{q}.
\end{align*}
Therefore, we have
\begin{align}\label{EI-s-b1}
\int_{s}^{s+a}\int_{\mathbb{R}^{d}}(t-s)^{\frac{q\gamma_{1}}{\gamma_{2}}-1} |\mathcal{C}(t)*f(x)|^{q}dxdt
\le 2^{q-1}C^q  \left(\left\|S_{0}(f)\right\|_{L^{q}}^q A_1+A_2\right),
\end{align}
where
\begin{align*}
A_1&=\int_{s}^{s+a}(t-s)^{\frac{q\gamma_{1}}{\gamma_{2}}-1}
   \left(\sum_{j=-\infty}^{1}2^{j\gamma_{1}}e^{-c(t-s)2^{j\gamma_{2}}}\right)^{q}dt,\\
A_2&=\int_{s}^{s+a}(t-s)^{\frac{q\gamma_{1}}{\gamma_{2}}-1} \left(\sum_{j=1}^{\infty}\sum_{j-1\leq i\leq j+1}
         2^{i\gamma_{1}}e^{-c(t-s)2^{i\gamma_{2}}}
          \left\|f_{j}\right\|_{L^{q}}\right)^{q}dt.
\end{align*}
On the other hand, since
\[
\sum_{j=-\infty}^{1}2^{j\gamma_{1}}e^{-c(t-s)2^{j\gamma_{2}}}
\leq \sum_{j=-\infty}^{1}2^{j\gamma_{1}}
=2^{\gamma_{1}}+\frac{2^{\gamma_{1}}}{2^{\gamma_{1}}-1}=:C_{1},
\]
we have
\begin{align}\label{Estimation of A1}
A_{1}\le C_1^q \int_{s}^{s+a}(t-s)^{\frac{q\gamma_{1}}{\gamma_{2}}-1}dt\leq \frac{C_1^q\gamma_{2}}{q\gamma_{1}}a^{\frac{q\gamma_{1}}{\gamma_{2}}}.
\end{align}
Now, we estimate $A_{2}$. For a notational convenience,
we put $a_{i}=2^{i\gamma_{1}}e^{-ct2^{i\gamma_{2}}}$ for $i\geq 0$.
Then for each $j\geq 1$ it holds that
\[
a_{i}\leq 2^{(j+1)\gamma_{1}}e^{-c(t-s)2^{(j-1)\gamma_{2}}},\quad i=j-1,\,\,j,\,\,j+1.
\]
Therefore, we obtain that
\begin{align*}
\sum_{j=1}^{\infty}\sum_{j-1\leq i\leq j+1}2^{i\gamma_{1}}e^{-c(t-s)2^{i\gamma_{2}}}\left\|f_{j}\right\|_{L^{q}}
&=\sum_{j=1}^{\infty}(a_{j-1}+a_{j}+a_{j+1})\|f_{j}\|_{L^{q}}\\
&\leq 3\sum_{j=1}^{\infty}2^{(j+1)\gamma_{1}}e^{-c(t-s)2^{(j-1)\gamma_{2}}}\|f_{j}\|_{L^{q}}\\
&=3\cdot 2^{2\gamma_{1}}\sum_{j=0}^{\infty}2^{j\gamma_{1}}e^{-c(t-s)2^{j\gamma_{2}}}
    \|f_{j+1}\|_{L^{q}},
\end{align*}
from which by considering $J=\{j\in\mathbb{N}\cup\{0\}\,|\,(t-s)2^{j\gamma_{2}}\leq 1\}$, we see that
\begin{align*}
A_2&=\int_{s}^{s+a} (t-s)^{\frac{q\gamma_{1}}{\gamma_{2}}-1}\left(\sum_{j=1}^{\infty}\sum_{j-1\leq i\leq j+1}
         2^{i\gamma_{1}}e^{-c(t-s)2^{i\gamma_{2}}}
          \left\|f_{j}\right\|_{L^{q}}\right)^{q}dt\\
&\le 3^q\cdot 2^{2q\gamma_{1}}\int_{s}^{s+a}(t-s)^{\frac{q\gamma_{1}}{\gamma_{2}}-1}
     \left(\sum_{j=0}^{\infty}2^{j\gamma_{1}}e^{-c(t-s)2^{j\gamma_{2}}}\|f_{j+1}\|_{L^{q}}\right)^qdt\\
&\le 3^{q}\cdot 2^{2q\gamma_{1}}2^{q-1}(B_1+B_2),
\end{align*}
where
\begin{align*}
B_1:&=\int_{s}^{s+a}(t-s)^{\frac{q\gamma_{1}}{\gamma_{2}}-1}\left(\sum_{j\in J}2^{j\gamma_{1}}e^{-c(t-s)2^{j\gamma_{2}}}
         \|f_{j+1}\|_{L^{q}}\right)^{q}dt,\\
B_2:&=\int_{s}^{s+a}(t-s)^{\frac{q\gamma_{1}}{\gamma_{2}}-1}\left(\sum_{j\notin J}2^{j\gamma_{1}}e^{-c(t-s)2^{j\gamma_{2}}}
         \|f_{j+1}\|_{L^{q}}\right)^{q}dt.
\end{align*}
For $B_{1}$, we note that for all $j\in J$ with $2^{j\gamma_{2}}\leq (t-s)^{-1}$, it holds that
\[
\sum_{j\in J}2^{\frac{j\gamma_{1} q }{2(q-1)}}
=\sum_{j\in J}\left(2^{j\gamma_{2}}\right)^{\frac{\gamma_{1} q }{2(q-1)\gamma_{2}}}
\leq \sum_{j\in J}(t-s)^{-\frac{ q\gamma_{1}}{2(q-1)\gamma_{2}}}
= |J|(t-s)^{-\frac{ q\gamma_{1}}{2(q-1)\gamma_{2}}},
\]
where $|J|$ is the cardinal number of $J$.
By H\"{o}lder inequality and the fact that
\begin{align*}
e^{-c(t-s)2^{j\gamma_{2}}}\leq 1,
\end{align*}
and by using
 \begin{align*}
 1_{\{(t-s)2^{j\gamma_{2}}\leq 1\}}(j)
 &=\left\{
     \begin{array}{ll}
       1, & \hbox{$(t-s)2^{j\gamma_{2}}\leq 1$,} \\
       0, & \hbox{otherwise.}
     \end{array}
   \right.
=\left\{
     \begin{array}{ll}
       1, & \hbox{$t\leq s+2^{-j\gamma_{2}}$,} \\
       0, & \hbox{otherwise.}
     \end{array}
   \right.\\
 &=1_{[s,s+2^{-j\gamma_{2}}]}(t),
 \end{align*}
we obtain that
\begin{align*}
B_1&=\int_{s}^{s+a}(t-s)^{\frac{q\gamma_{1}}{\gamma_{2}}-1}\left(\sum_{j\in J}2^{\frac{j\gamma_{1}}{2}}2^{\frac{j\gamma_{1}}{2}}e^{-ct2^{j\gamma_{2}}}
         \|f_{j+1}\|_{L^{q}}\right)^{q}dt\\
&\le \int_{s}^{s+a}(t-s)^{\frac{q\gamma_{1}}{\gamma_{2}}-1}\left(\sum_{j\in J}2^{\frac{j\gamma_{1} q}{2(q-1)}}\right)^{q-1}
   \sum_{j\in J}2^{\frac{j\gamma_{1} q}{2}}e^{-cq t2^{j\gamma_{2}}}\|f_{j+1}\|_{L^{q}}^{q}dt\\
&\le |J|^{q-1}\int_{s}^{s+a}(t-s)^{\frac{q\gamma_{1}}{2\gamma_{2}}-1}
   \sum_{j=0}^{\infty}1_{\{(t-s)2^{j\gamma_{2}}\leq 1\}}(j)2^{\frac{j\gamma_{1} q}{2}}
   \|f_{j+1}\|_{L^{q}}^{q}dt\\
&=|J|^{q-1}\sum_{j=0}^{\infty}
  \int_{s}^{s+a}(t-s)^{\frac{q\gamma_{1}}{2\gamma_{2}}-1}1_{[s,s+2^{-j\gamma_{2}}]}(t)2^{\frac{j\gamma_{1} q}{2}}
  \|f_{j+1}\|_{L^{q}}^{q}dt\\
&=|J|^{q-1}\sum_{j=0}^{\infty}2^{\frac{j\gamma_{1} q}{2}}\|f_{j+1}\|_{L^{q}}^{q}
       \int_{s}^{s+(a \wedge 2^{-j\gamma_{2}})}(t-s)^{\frac{q\gamma_{1}}{2\gamma_{2}}-1}dt\\
&\le |J|^{q-1}\sum_{j=0}^{\infty}2^{\frac{j\gamma_{1} q}{2}}\|f_{j+1}\|_{L^{q}}^{q}\frac{2\gamma_{2}}{q\gamma_{1}}2^{-\frac{j\gamma_{1} q}{2}}\\
&=\frac{2\gamma_{2}|J|^{q-1}}{q\gamma_{1}}\sum_{j=0}^{\infty}\|f_{j+1}\|_{L^{q}}^{q}.
\end{align*}
In order to estimate $B_{2}$, we note that
\[
\sum_{j\notin J}e^{-c(t-s)2^{j\gamma_{2}}}\leq C_{2}
\]
for some constant $C_{2}>0$.
By applying the H\"{o}lder's inequality with the conjugate number $q'$ of $q$
and changing variable,
for some constant $C_{3}>0$, we obtain that
\begin{align*}
B_2&=\int_{s}^{s+a}(t-s)^{\frac{q\gamma_{1}}{\gamma_{2}}-1}\left(\sum_{j\notin J}2^{j\gamma_{1} }e^{-c(t-s)2^{j\gamma_{1}}(\frac{1}{q'}+\frac{1}{q})}
         \|f_{j+1}\|_{L^{q}}\right)^{q}dt\\
&\le \int_{s}^{s+a}(t-s)^{\frac{q\gamma_{1}}{\gamma_{2}}-1}\left(\sum_{j\notin J}e^{-c(t-s)2^{j\gamma_{2}}}\right)^{\frac{q}{q'}}
                  \sum_{j\notin J}2^{j\gamma_{1} q}e^{-c(t-s)2^{j\gamma_{2}}}
         \|f_{j+1}\|_{L^{q}}^{q}dt\\
&= C_{3}\sum_{j\notin J}\|f_{j+1}\|_{L^{q}}^{q}
    \int_{0}^{c2^{j\gamma_{2}}a}u^{\frac{q\gamma_{1}}{\gamma_{2}}-1}e^{-u}du\\
&\leq C_{3} \sum_{j\notin J}\|f_{j+1}\|_{L^{q}}^{q}
        \int_{0}^{\infty}u^{\frac{q\gamma_{1}}{\gamma_{2}}-1}e^{-u}du\\
&\leq C_{3}  \Gamma\left(\frac{q\gamma_{1}}{\gamma_{2}}\right)
   \sum_{j=0}^{\infty}\|f_{j+1}\|_{L^{q}}^{q},
\end{align*}
where $\Gamma$ is the gamma function.
Therefore, we obtain that
\begin{align}
A_{2}
&\le 3^{q}\cdot 2^{2q\gamma_{1}}2^{q-1}(B_1+B_2)\nonumber\\
&\le 3^{q}\cdot 2^{2q\gamma_{1}}2^{q-1}\left(\frac{2\gamma_{2}|J|^{q-1}}{q\gamma_{1}}\sum_{j=0}^{\infty}\|f_{j+1}\|_{L^{q}}^{q}
  +C_{3}\Gamma\left(\frac{q\gamma_{1}}{\gamma_{2}}\right)
     \sum_{j=0}^{\infty}\|f_{j+1}\|_{L^{q}}^{q}\right)\nonumber\\
&\le M_1\sum_{j=1}^{\infty} \|f_{j}\|_{L^{q}}^{q},
\quad
     M_1=3^{q}\cdot 2^{2q\gamma_{1}}2^{q-1}
     \max\left\{\frac{2\gamma_{2}|J|^{q-1}}{q\gamma_{1}},C_{3}\Gamma\left(\frac{q\gamma_{1}}{\gamma_{2}}\right)\right\}.
     \label{Estimation of A2}
\end{align}
Finally, by combining \eqref{EI-s-b1}, \eqref{Estimation of A1} and \eqref{Estimation of A2},
we obtain that
\begin{align*}
\int_{s}^{s+a}\int_{\mathbb{R}^{d}}(t-s)^{\frac{q\gamma_{1}}{\gamma_{2}}-1}
|\mathcal{C}(t)*f(x)|^{q}dxdt
&\le 2^{q-1}C^q\left(\left\|S_{0}(f)\right\|_{L^{q}}^q A_1+A_2\right)\\
&\le 2^{q-1}C^q\left(A_{1}  \left\|S_{0}(f)\right\|_{L^{q}}^q
  +M_1\sum_{j=1}^{\infty} \|f_{j}\|_{L^{q}}^{q}\right)\\
&\le M_{2}\left(\left\|S_{0}(f)\right\|_{L^{q}}^q
                        +\sum_{j=1}^{\infty} \|f_{j}\|_{L^{q}}^{q}\right)\\
&\le M_{2}\|f\|_{B_{qq}^{0}(\mathbb{R}^{d})}^{q}
\end{align*}
with
\begin{align*}
M_2=2^{q-1}C^q
  \max\left\{A_{1},M_{1}\right\},
\end{align*}
which implies \eqref{eqn:FE for LPI}.
\end{proof}

\section{Boundedness of Convolution Operators}
\label{sec:Boundedness of Convolution Operators}

In this section, we prove the boundedness of the convolution operators
corresponding to the compositions of pseudo-differential operators and evolution systems
associated with the symbols of pseudo-differential operators.

\begin{theorem} \label{thm: LP ineq p equal lambda}
Let $(\psi_{1},\psi_{2})\in\mathfrak{S}^{2}$.
Let $0< a< \infty$ and let $q\geq 2$. Then it holds that
\begin{itemize}
  \item [\rm{(i)}] there exists a constant $C_{1}>0$ depending on $a,d,q,\boldsymbol{\gamma},\boldsymbol{\mu},$ and $\boldsymbol{\kappa}$ such that for any $s,l\geq 0$ and $f\in L^{q}(\mathbb{R}^{d})$,
\begin{equation}\label{eqn: LP 1}
\int_{s}^{s+a}\int_{\mathbb{R}^{d}}(t-s)^{\frac{q\gamma_{1}}{\gamma_{2}}-1}
       |\left(L_{\psi_{1}}(l)p_{\psi_{2}}(t,s,\cdot)\right)*f(x)|^{q}dxdt
\leq C_{1}\int_{\mathbb{R}^{d}}|f(x)|^{q}dx,
\end{equation}
i.e., the convolution operator $Tf=\left(L_{\psi_{1}}(l)p_{\psi_{2}}(\cdot,s,\cdot)\right)*f$ is a bounded linear operator from $L^{q}(\mathbb{R}^{d})$ into $L^{q}(\mathbb{R}^{d};V)$ with $V=L^{q}((s,s+a), (t-s)^{\frac{q\gamma_{1}}{\gamma_{2}}-1}dt)$,
  \item [\rm{(ii)}] if $\psi_{1}(l,\xi)=\psi_{1}(\xi)$, $\psi_{2}(r,\xi)=\psi_{2}(\xi)$,
i.e., $\psi_{1}(l,\xi)$ and $\psi_{2}(r,\xi)$ are constant with respect to the variables $l$ and $r$, respectively,
and $\psi_{1},\psi_{2}$ satisfy the homogeneity:
\begin{align}\label{eqn:homogenioty of psi}
\psi_{1}(\lambda\xi)=\lambda^{\gamma_{1}}\psi_{1}(\xi),\quad
\psi_{2}(\lambda\xi)=\lambda^{\gamma_{2}}\psi_{2}(\xi),\quad\text{for}\quad\lambda>0,
\end{align}
then there exists a constant $C_2>0$ depending on $d,q,\boldsymbol{\gamma},\boldsymbol{\mu},$ and $\boldsymbol{\kappa}$
such that for any $s,l\ge0$ and $f\in L^{q}(\mathbb{R}^{d})$,
\begin{equation}\label{eqn: LP 3}
\int_{s}^{\infty}\int_{\mathbb{R}^{d}}(t-s)^{\frac{q\gamma_{1}}{\gamma_{2}}-1}
         |\left(L_{\psi_{1}}p_{\psi_{2}}(t,s,\cdot)\right)*f(x)|^{q}dxdt
\leq C_{2}\int_{\mathbb{R}^{d}}|f(x)|^{q}dx,
\end{equation}
i.e., the convolution operator $Tf=\left(L_{\psi_{1}}p_{\psi_{2}}(\cdot,s,\cdot)\right)*f$ is a bounded linear operator from $L^{q}(\mathbb{R}^{d})$ into $L^{q}(\mathbb{R}^{d};V)$ with $V=L^{q}((s,\infty), (t-s)^{\frac{q\gamma_{1}}{\gamma_{2}}-1}dt)$,
  \item [\rm{(iii)}] if $q=2$, then there exists a constant $C_{3}>0$ depending on $\mu_{1},\kappa_{2}$, and $\boldsymbol{\gamma}$
such that for any $s,l\ge0$ and $f\in L^{2}(\mathbb{R}^{d})$,
\begin{equation}\label{eqn: LP 2}
\int_{s}^{\infty}\int_{\mathbb{R}^{d}}(t-s)^{\frac{2\gamma_{1}}{\gamma_{2}}-1}
      |\left(L_{\psi_{1}}(l)p_{\psi_{2}}(t,s,\cdot)\right)*f(x)|^{2}dxdt
\leq C_{3}\int_{\mathbb{R}^{d}}|f(x)|^{2}dx,
\end{equation}
i.e., the convolution operator $Tf=\left(L_{\psi_{1}}(l)p_{\psi_{2}}(\cdot,s,\cdot)\right)*f$ is a bounded linear operator from $L^{2}(\mathbb{R}^{d})$ into $L^{2}(\mathbb{R}^{d};V)$ with $V=L^{2}((s,\infty), (t-s)^{\frac{2\gamma_{1}}{\gamma_{2}}-1}dt)$.
\end{itemize}
\end{theorem}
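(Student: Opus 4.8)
The plan is to establish the three assertions in the order (i), (iii), (ii). Assertion (i) is a direct consequence of Theorem~\ref{lem: LP ineq for p equal lambda fixed s} together with the Sobolev--Besov embedding \eqref{eqn: Hps embedded Bpps}: since $q\ge 2$ and $L^{q}(\mathbb{R}^{d})=H_{q}^{0}(\mathbb{R}^{d})$, \eqref{eqn: Hps embedded Bpps} gives $\|f\|_{B_{qq}^{0}(\mathbb{R}^{d})}\le C\|f\|_{L^{q}(\mathbb{R}^{d})}$ for every $f\in L^{q}(\mathbb{R}^{d})$, so $f\in B_{qq}^{0}(\mathbb{R}^{d})$ and Theorem~\ref{lem: LP ineq for p equal lambda fixed s} yields
\[
\int_{s}^{s+a}\int_{\mathbb{R}^{d}}(t-s)^{\frac{q\gamma_{1}}{\gamma_{2}}-1}\left|\bigl(L_{\psi_{1}}(l)p_{\psi_{2}}(t,s,\cdot)\bigr)*f(x)\right|^{q}dx\,dt\le C\|f\|_{B_{qq}^{0}(\mathbb{R}^{d})}^{q}\le C_{1}\|f\|_{L^{q}(\mathbb{R}^{d})}^{q},
\]
with $C_{1}$ depending only on $a,d,q,\boldsymbol{\gamma},\boldsymbol{\mu},\boldsymbol{\kappa}$, which is \eqref{eqn: LP 1}.

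For (iii) I would argue by Fubini's theorem and the Plancherel theorem in the $x$-variable: the left-hand side of \eqref{eqn: LP 2} equals
\[
\int_{\mathbb{R}^{d}}|\mathcal{F}f(\xi)|^{2}\left(\int_{s}^{\infty}(t-s)^{\frac{2\gamma_{1}}{\gamma_{2}}-1}|\psi_{1}(l,\xi)|^{2}\left|\exp\left(\int_{s}^{t}\psi_{2}(r,\xi)\,dr\right)\right|^{2}dt\right)d\xi .
\]
Applying \textbf{(S2)} with $|\alpha|=0$ (so $|\psi_{1}(l,\xi)|\le\mu_{1}|\xi|^{\gamma_{1}}$) and \textbf{(S1)} (so $\left|\exp\left(\int_{s}^{t}\psi_{2}(r,\xi)dr\right)\right|^{2}\le e^{-2\kappa_{2}(t-s)|\xi|^{\gamma_{2}}}$), the substitution $u=2\kappa_{2}(t-s)|\xi|^{\gamma_{2}}$ shows that the inner integral is at most $\mu_{1}^{2}(2\kappa_{2})^{-2\gamma_{1}/\gamma_{2}}\Gamma(2\gamma_{1}/\gamma_{2})=:C_{3}$, a constant independent of $\xi$, $s$ and $l$. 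Hence the left-hand side of \eqref{eqn: LP 2} is at most $C_{3}\|\mathcal{F}f\|_{L^{2}(\mathbb{R}^{d})}^{2}=C_{3}\|f\|_{L^{2}(\mathbb{R}^{d})}^{2}$, which is \eqref{eqn: LP 2}.

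Finally, for (ii), under the homogeneity \eqref{eqn:homogenioty of psi} and the time-independence of $\psi_{1},\psi_{2}$ the kernel $\Lambda(\tau,x):=L_{\psi_{1}}p_{\psi_{2}}(t,s,x)=\mathcal{F}^{-1}\bigl(\psi_{1}(\xi)e^{(t-s)\psi_{2}(\xi)}\bigr)(x)$ depends only on $\tau=t-s$ and $x$ and lies in $L^{1}(\mathbb{R}^{d})$ (by Lemma~\ref{lem: esti of partial psi 1 psi 2} and Theorem~\ref{thm:ph is in L1}). Substituting $\xi\mapsto\lambda^{-1}\xi$ in the Fourier integral and using \eqref{eqn:homogenioty of psi} gives the exact scaling identity $\Lambda(\tau,x)=\lambda^{d+\gamma_{1}}\Lambda(\lambda^{\gamma_{2}}\tau,\lambda x)$ for all $\lambda>0$, hence with $f_{\lambda}(x):=f(\lambda x)$,
\[
\bigl(\Lambda(\tau,\cdot)*f_{\lambda}\bigr)(x)=\lambda^{\gamma_{1}}\bigl(\Lambda(\lambda^{\gamma_{2}}\tau,\cdot)*f\bigr)(\lambda x).
\]
Writing $I_{a}[f]$ for the left-hand side of \eqref{eqn: LP 1} (which under these hypotheses is independent of $s$), the changes of variables $\sigma=\lambda^{\gamma_{2}}\tau$ and $w=\lambda x$ yield $I_{a}[f_{\lambda}]=\lambda^{-d}I_{\lambda^{\gamma_{2}}a}[f]$, while $\|f_{\lambda}\|_{L^{q}(\mathbb{R}^{d})}^{q}=\lambda^{-d}\|f\|_{L^{q}(\mathbb{R}^{d})}^{q}$. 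Therefore the operator norm of $f\mapsto\bigl(L_{\psi_{1}}p_{\psi_{2}}(\cdot,s,\cdot)\bigr)*f$ from $L^{q}(\mathbb{R}^{d})$ into $L^{q}(\mathbb{R}^{d};V_{a})$, with $V_{a}=L^{q}((s,s+a),(t-s)^{q\gamma_{1}/\gamma_{2}-1}dt)$, is the same for every $a>0$; by (i) this common value is finite (take $a=1$). Letting $a\to\infty$ and using the monotone convergence theorem (the integrand is nonnegative and $(s,s+a)\uparrow(s,\infty)$) then gives \eqref{eqn: LP 3} with a constant $C_{2}$ independent of $a$, $s$ and $l$. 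The only delicate point in the whole argument is this last step: one has to check that the scaling identity for $\Lambda$ is an exact equality, so that the operator norm is genuinely $a$-independent rather than merely comparable across different $a$, and that the passage $a\to\infty$ is legitimate, which is precisely where the monotone convergence theorem enters.
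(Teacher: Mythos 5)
Your proof is correct and uses essentially the same approach as the paper: part (i) via Theorem~\ref{lem: LP ineq for p equal lambda fixed s} and the embedding \eqref{eqn: Hps embedded Bpps}, part (iii) via Plancherel plus \textbf{(S1)}--\textbf{(S2)} and the Gamma integral, and part (ii) via a scaling/dilation argument showing the constant in (i) is independent of $a$, then letting $a\to\infty$ by monotone convergence. The only cosmetic difference is that you parametrize the dilation as $f_{\lambda}(x)=f(\lambda x)$ and let $\lambda^{\gamma_{2}}\to\infty$, whereas the paper writes $f_{b}(x)=f(b^{1/\gamma_{2}}x)$ and lets $b\to\infty$; the computations are equivalent.
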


\begin{proof}
(i) \enspace By applying Theorem \ref{lem: LP ineq for p equal lambda fixed s} and \eqref{eqn: Hps embedded Bpps} with $\alpha=0$,
we see that there exist constants $C,\widetilde{C}>0$ such that
\begin{align}\label{ineq: int s b int Rd is bounded by L lambda norm}
\int_{s}^{s+a}\int_{\mathbb{R}^{d}}(t-s)^{\frac{q\gamma_{1}}{\gamma_{2}}-1}
    |\left(L_{\psi_{1}}(l)p_{\psi_{2}}(t,s,\cdot)\right)*f(x)|^{q}dxdt
&\leq C\|f\|_{B_{qq}^{0}(\mathbb{R}^{d})}^{q}\nonumber\\
&\leq C\widetilde{C}\|f\|_{H_{q}^{0}(\mathbb{R}^{d})}^{q}\nonumber\\
&=C\widetilde{C}\|f\|_{L^{q}(\mathbb{R}^{d})}^{q},
\end{align}
and then by taking $C_{1}=C\widetilde{C}$, we get \eqref{eqn: LP 1}.

(ii) \enspace
For notational convenience, for each $b>0$, we put $f_{b}(x):=f(b^{\frac{1}{\gamma_{2}}}x)$.
Then we have
\begin{align*}
(\mathcal{F}f_{b})(\xi)=b^{-\frac{d}{\gamma_{2}}}(\mathcal{F}f)(b^{-\frac{1}{\gamma_{2}}}\xi),
\end{align*}
which, from the homogeneity of $\psi_{1}$ and $\psi_{2}$, implies that
\begin{align}
L_{\psi_{1}}p_{\psi_{2}}(bt+s,s,\cdot)*f(x)
&=\mathcal{F}^{-1}(\psi_{1}(\xi)e^{bt\psi_{2}(\xi)}\mathcal{F}f(\xi))(x)\nonumber\\
&=\mathcal{F}^{-1}(\psi_{1}(\xi)e^{t\psi_{2}(b^{\frac{1}{\gamma_{2}}}\xi)}\mathcal{F}f(\xi))(x)\nonumber\\
&=b^{-\frac{\gamma_{1}}{\gamma_{2}}-\frac{d}{\gamma_{2}}}\mathcal{F}^{-1}
  (\psi_{1}(\xi)e^{t\psi_{2}(\xi)}\mathcal{F}f(b^{-\frac{1}{\gamma_{2}}}\xi))(b^{-\frac{1}{\gamma_{2}}}x)\nonumber\\
&=b^{-\frac{\gamma_{1}}{\gamma_{2}}}\left(\left(L_{\psi_{1}}p_{\psi_{2}}(t,0,\cdot)\right)*f_{b}\right)(b^{-\frac{1}{\gamma_{2}}}x).
               \label{eqn: LT bt+s}
\end{align}
Then by applying \eqref{eqn: LT bt+s}, \eqref{ineq: int s b int Rd is bounded by L lambda norm} with $s=0$ and $a=1$,
and the change of variables,
we obtain that
\begin{align}\label{eqn: ineq with constant independent of a}
&\int_{s}^{s+b}\int_{\mathbb{R}^{d}}(t-s)^{\frac{q\gamma_{1}}{\gamma_{2}}-1}
    |\left(L_{\psi_{1}}p_{\psi_{2}}(t,s,\cdot)\right)*f(x)|^{q}dxdt\nonumber\\
&=b^{\frac{q\gamma_{1}}{\gamma_{2}}}\int_{0}^{1}\int_{\mathbb{R}^{d}}t^{\frac{q\gamma_{1}}{\gamma_{2}}-1}
    |\left(L_{\psi_{1}}p_{\psi_{2}}(bt+s,s,\cdot)\right)*f(x)|^{q}dxdt\nonumber\\
&=\int_{0}^{1}\int_{\mathbb{R}^{d}}t^{\frac{q\gamma_{1}}{\gamma_{2}}-1}
    |\left(\left(L_{\psi_{1}}p_{\psi_{2}}(t,0,\cdot)\right)*f_{b}\right)(b^{-\frac{1}{\gamma_{2}}}x)|^{q}dxdt\nonumber\\
&=b^{\frac{d}{\gamma_{2}}}\int_{0}^{1}\int_{\mathbb{R}^{d}}t^{\frac{q\gamma_{1}}{\gamma_{2}}-1}
    |\left(\left(L_{\psi_{1}}p_{\psi_{2}}(t,0,\cdot)\right)*f_{b}\right)(x)|^{q}dxdt\nonumber\\
&\leq b^{\frac{d}{\gamma_{2}}}C_{q,d,\boldsymbol{\gamma},\boldsymbol{\kappa},\boldsymbol{\mu}}
    \int_{\mathbb{R}^{d}}|f_{b}(x)|^{q}dx\nonumber\\
&=b^{\frac{d}{\gamma_{2}}}C_{q,d,\boldsymbol{\gamma},\boldsymbol{\kappa},\boldsymbol{\mu}}\int_{\mathbb{R}^{d}}
    |f(b^{\frac{1}{\gamma_{2}}}x)|^{q}dx\nonumber\\
&=C_{q,d,\boldsymbol{\gamma},\boldsymbol{\kappa},\boldsymbol{\mu}}\int_{\mathbb{R}^{d}}|f(x)|^{q}dx.
\end{align}
Since $b>0$ is arbitrary, by taking $b\rightarrow\infty$ in \eqref{eqn: ineq with constant independent of a},
we have the inequality given in \eqref{eqn: LP 3}.

(iii) \enspace Let $q=2$. Then by Plancherel theorem and Assumptions \textbf{(S1)} and \textbf{(S2)},
we obtain that
\begin{align*}
&\int_{s}^{\infty}\int_{\mathbb{R}^{d}}(t-s)^{\frac{2\gamma_{1}}{\gamma_{2}}-1}
   |\left(L_{\psi_{1}}(l)p_{\psi_{2}}(t,s,\cdot)\right)*f(x)|^{2}dxdt\\
&=\int_{s}^{\infty}\int_{\mathbb{R}^{d}}(t-s)^{\frac{2\gamma_{1}}{\gamma_{2}}-1}\left|\psi_{1}(l,\xi)\exp\left(\int_{s}^{t}
\psi_{2}(r,\xi)dr\right)\mathcal{F}f(\xi)\right|^{2}d\xi dt\\
&\leq \int_{s}^{\infty}\int_{\mathbb{R}^{d}}(t-s)^{\frac{2\gamma_{1}}{\gamma_{2}}-1}
   \mu_{1}^{2}|\xi|^{2\gamma_{1}}e^{-2\kappa_{2}(t-s)|\xi|^{\gamma_{2}}}\left|\mathcal{F}f(\xi)\right|^{2}d\xi dt\\
&=\mu_{1}^{2}\int_{\mathbb{R}^{d}}|\xi|^{2\gamma_{1}}
  \left(\int_{s}^{\infty}(t-s)^{\frac{2\gamma_{1}}{\gamma_{2}}-1}e^{-2\kappa_{2}(t-s)|\xi|^{\gamma_{2}}}dt\right)
  \left|\mathcal{F}f(\xi)\right|^{2} d\xi\\
&=\mu_{1}^{2}\int_{\mathbb{R}^{d}}|\xi|^{2\gamma_{1}}\Gamma
  \left(\frac{2\gamma_{1}}{\gamma_{2}}\right)(2\kappa_{2}|\xi|^{\gamma_{2}})^{-\frac{2\gamma_{1}}{\gamma_{2}}}
    \left|\mathcal{F}f(\xi)\right|^{2} d\xi\\
&=\mu_{1}^{2}\Gamma\left(\frac{2\gamma_{1}}{\gamma_{2}}\right)(2\kappa_{2})^{-\frac{2\gamma_{1}}{\gamma_{2}}}
   \int_{\mathbb{R}^{d}}\left|f(x)\right|^{2} dx,
\end{align*}
which proves \eqref{eqn: LP 2}.
\end{proof}

Let $(V_{1},\|\cdot\|_{V_{1}}), (V_{2},\|\cdot\|_{V_{2}})$ be Banach spaces
and let $\mathcal{B}(V_{1},V_{2})$ be the space of all bounded linear operators from $V_{1}$ into $V_{2}$.

For a Banach space $V$ and $1\leq p< \infty$,
we denote by $L^{p}(\mathbb{R}^{d};V)$ the space of all strongly measurable functions
$f:\mathbb{R}^{d}\rightarrow V$ with the norm
\[
\|f\|_{L^{p}(\mathbb{R}^{d};V)}=\left(\int_{\mathbb{R}^{d}}\|f(x)\|_{V}^{p}dx\right)^{\frac{1}{p}}.
\]

The following theorem is known as the Calder\'{o}n-Zygmund theorem for vector-valued functions.

\begin{theorem}[Theorem 1.5.3 in \cite{Stein 1993}]\label{thm: IKO is bounded in Lp}
Let $V_{1},V_{2}$ be Banach spaces and let $K:\mathbb{R}^{d}\times \mathbb{R}^{d}\rightarrow \mathcal{B}(V_{1},V_{2})$ be a strongly measurable and locally integrable function. For any bounded strongly measurable $V_{1}$-valued function $f$ with compact support defined on $\mathbb{R}^{d}$, the operator
\[
Tf(x):=\int_{\mathbb{R}^{d}}K(x,y)f(y)dy,
\]
is well defined.
Suppose that $T$ is bounded from $L^{q}(\mathbb{R}^{d};V_{1})$ into $L^{q}(\mathbb{R}^{d};V_{2})$ for some $q>1$.
Assume that there exists a constant $A>0$ such that
\begin{equation}\label{eqn: kernel condition for IKO}
\int_{|x-y|\geq 2|y-z|}\|K(x,y)-K(x,z)\|_{\rm OP}dx\leq A
\end{equation}
for all $y, z\in \mathbb{R}^{d}$ with $y\ne z$, where $\|\cdot\|_{\rm OP}$ is the operator norm.
Then for any $1<p<\infty$, there exists a constant $C>0$ depending on $p,d,A$ and $\|T\|_{\rm OP}$ (depending on $q$) such that
\begin{equation}\label{eqn: IKO is bounded}
\|Tf\|_{L^{p}(\mathbb{R}^{d};V_{2})}\leq C\|f\|_{L^{p}(\mathbb{R}^{d};V_{1})}
\end{equation}
for all $f\in L^{p}(\mathbb{R}^{d};V_{1})$.
\end{theorem}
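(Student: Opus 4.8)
The plan is to follow the classical Calder\'{o}n--Zygmund template, with absolute values of the kernel replaced by operator norms and absolute values of functions by the $V_{i}$-norms; the whole content is a weak type $(1,1)$ estimate, after which interpolation and duality finish the job. First I would reduce to proving that $T$ is of weak type $(1,1)$: there is a constant $C$ with
\[
\bigl|\{x\in\mathbb{R}^{d}:\|Tf(x)\|_{V_{2}}>\lambda\}\bigr|\leq\frac{C}{\lambda}\,\|f\|_{L^{1}(\mathbb{R}^{d};V_{1})}
\]
for every $\lambda>0$ and every bounded, compactly supported, strongly measurable $V_{1}$-valued $f$ (such $f$ also lies in $L^{q}(\mathbb{R}^{d};V_{1})$, so $Tf$ is already defined and in $L^{q}(\mathbb{R}^{d};V_{2})$). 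Granting this, the Marcinkiewicz interpolation theorem applied to the pair (weak $(1,1)$, strong $(q,q)$) yields the strong $(p,p)$ bound for $1<p<q$; for $q\leq p<\infty$ one dualizes, passing to the transpose operator $T'$, whose kernel $K'(x,y)=K(y,x)^{*}$ (adjoint in $\mathcal{B}(V_{2}^{*},V_{1}^{*})$) has to be checked to satisfy the analogous H\"ormander condition --- which is automatic in the situation relevant to this paper, where $K$ is a convolution kernel $K(x,y)=\mathbf{K}(x-y)$ and the condition proven in Proposition \ref{prop: estimation of int of Kt(x-y)-Kt(x) over outside ball} is symmetric enough under $x\mapsto -x$.

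For the weak $(1,1)$ estimate, fix $\lambda>0$ and apply the Calder\'{o}n--Zygmund decomposition to the scalar function $x\mapsto\|f(x)\|_{V_{1}}$ at height $\lambda$: this produces pairwise essentially disjoint dyadic cubes $\{Q_{j}\}$ and a splitting $f=g+b$, $b=\sum_{j}b_{j}$, with $\|g\|_{L^{\infty}(\mathbb{R}^{d};V_{1})}\leq c_{d}\lambda$, $\|g\|_{L^{1}(\mathbb{R}^{d};V_{1})}\leq\|f\|_{L^{1}(\mathbb{R}^{d};V_{1})}$; each $b_{j}$ supported in $Q_{j}$, with vanishing Bochner integral $\int_{Q_{j}}b_{j}(y)\,dy=0$ in $V_{1}$ and $\|b_{j}\|_{L^{1}(Q_{j};V_{1})}\leq c_{d}\lambda|Q_{j}|$, and $\sum_{j}|Q_{j}|\leq(c_{d}/\lambda)\|f\|_{L^{1}(\mathbb{R}^{d};V_{1})}$. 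Here the averages $|Q_{j}|^{-1}\int_{Q_{j}}f$ are Bochner integrals, and strong measurability is preserved throughout.

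The good part is handled by Chebyshev's inequality together with the hypothesis: since $\|g(x)\|_{V_{1}}\leq c_{d}\lambda$ a.e.\ gives $\|g\|_{L^{q}(V_{1})}^{q}\leq(c_{d}\lambda)^{q-1}\|f\|_{L^{1}(V_{1})}$, we get $|\{\|Tg\|_{V_{2}}>\lambda/2\}|\leq(2/\lambda)^{q}\|T\|_{\mathrm{OP}}^{q}\|g\|_{L^{q}(V_{1})}^{q}\leq(C/\lambda)\|f\|_{L^{1}(V_{1})}$. For the bad part, let $Q_{j}^{*}$ be the cube concentric with $Q_{j}$ dilated by a fixed factor large enough (depending only on $d$) that $x\notin Q_{j}^{*}$ and $y\in Q_{j}$ force $|x-y|\geq 2|y-c_{j}|$, where $c_{j}$ is the center of $Q_{j}$; put $\Omega^{*}=\bigcup_{j}Q_{j}^{*}$, so that $|\Omega^{*}|\leq(C/\lambda)\|f\|_{L^{1}(V_{1})}$. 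Off $\Omega^{*}$ we exploit the cancellation: for $x\notin Q_{j}^{*}$,
\[
Tb_{j}(x)=\int_{Q_{j}}\bigl(K(x,y)-K(x,c_{j})\bigr)b_{j}(y)\,dy,
\]
so $\|Tb_{j}(x)\|_{V_{2}}\leq\int_{Q_{j}}\|K(x,y)-K(x,c_{j})\|_{\mathrm{OP}}\,\|b_{j}(y)\|_{V_{1}}\,dy$, and Fubini together with \eqref{eqn: kernel condition for IKO} (applied with $z=c_{j}$) gives $\int_{\mathbb{R}^{d}\setminus Q_{j}^{*}}\|Tb_{j}(x)\|_{V_{2}}\,dx\leq A\|b_{j}\|_{L^{1}(Q_{j};V_{1})}\leq c_{d}A\lambda|Q_{j}|$. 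Summing in $j$ and using Chebyshev on $\mathbb{R}^{d}\setminus\Omega^{*}$,
\[
\bigl|\{x\notin\Omega^{*}:\|Tb(x)\|_{V_{2}}>\lambda/2\}\bigr|\leq\frac{2}{\lambda}\sum_{j}\int_{\mathbb{R}^{d}\setminus Q_{j}^{*}}\|Tb_{j}(x)\|_{V_{2}}\,dx\leq 2c_{d}A\sum_{j}|Q_{j}|\leq\frac{C}{\lambda}\|f\|_{L^{1}(V_{1})}.
\]
Adding the bounds for $g$, for $\Omega^{*}$, and for $b$ off $\Omega^{*}$ yields the weak type $(1,1)$ estimate, and with it the theorem.

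The main obstacle is not conceptual but bookkeeping in the vector-valued setting: verifying that the Calder\'{o}n--Zygmund decomposition, the identity $\int_{Q_{j}}b_{j}=0$, and the applications of Fubini are all legitimate for strongly measurable $V_{1}$-valued functions (Bochner integration), and pinning down the dilation factor on the cubes precisely so that the one-sided H\"ormander condition \eqref{eqn: kernel condition for IKO} can be invoked. The one genuinely delicate point is the range $q\leq p<\infty$, which needs the transposed kernel to obey the same smoothness estimate; this is immediate for the convolution kernels arising in Proposition \ref{prop: estimation of int of Kt(x-y)-Kt(x) over outside ball}, which is exactly the case needed in this paper.
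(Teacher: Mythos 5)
Your sketch follows the standard Benedek--Calder\'{o}n--Panzone/Stein proof of the vector-valued Calder\'{o}n--Zygmund theorem; the paper itself gives no proof but only a citation to Stein's book, so the comparison is with the classical argument. The Calder\'{o}n--Zygmund decomposition of the scalar function $x\mapsto\|f(x)\|_{V_{1}}$ at height $\lambda$, the cancellation trick using $\int_{Q_{j}}b_{j}=0$ (a Bochner integral), the Fubini/H\"{o}rmander estimate off the dilated cubes, and the weak-$(1,1)$/Marcinkiewicz pipeline are all correct and in the right order.

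The point you flag deserves to be stated even more sharply, because it concerns the theorem \emph{as stated} and not just your proof. The H\"{o}rmander condition \eqref{eqn: kernel condition for IKO} controls variation of $K$ in its \emph{second} argument only, which is exactly what the Calder\'{o}n--Zygmund decomposition in $y$ consumes; it therefore yields weak $(1,1)$ and, by Marcinkiewicz interpolation against the given $L^{q}$ bound, strong $(p,p)$ for $1<p\le q$. For general (non-convolution) kernels this is where the argument stops: to cover $q<p<\infty$ by duality you must also assume the transposed kernel $K(y,x)^{*}\in\mathcal{B}(V_{2}^{*},V_{1}^{*})$ satisfies the analogous smoothness estimate, which is an independent hypothesis. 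As you correctly observe, the paper only ever invokes the theorem for convolution kernels $K(x,y)=\mathbf{K}(x-y)$ (Corollary~\ref{cor: boundedness of singular integral} and Proposition~\ref{prop: estimation of int of Kt(x-y)-Kt(x) over outside ball}), where the change of variables $x\mapsto -x$ turns the transposed condition into the original one, so nothing is lost in the application. A second, smaller subtlety in the duality step is that $L^{q'}(\mathbb{R}^{d};V_{2}^{*})$ need not exhaust $\bigl(L^{q}(\mathbb{R}^{d};V_{2})\bigr)^{*}$ unless $V_{2}^{*}$ has the Radon--Nikod\'{y}m property; however, the canonical isometric embedding $L^{q'}(V_{2}^{*})\hookrightarrow(L^{q}(V_{2}))^{*}$ always holds, and pairing $Tf$ against $L^{q'}(V_{2}^{*})$-functions is enough to define and bound the transpose on a dense class, so the duality argument goes through. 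These two remarks are the only genuinely non-routine points; the rest is the bookkeeping you describe.
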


\begin{corollary}\label{cor: boundedness of singular integral}
Let $V$ be a Banach space and
let $K:\mathbb{R}^{d}\rightarrow V$ be a strongly measurable and locally integrable function.
For any bounded measurable real-valued function $f$ with compact support defined on $\mathbb{R}^{d}$,
the operator
\[
Tf(x):=\int_{\mathbb{R}^{d}}K(x-y)f(y)dy.
\]
is well defined.
Suppose that $T$ is bounded from $L^{q}(\mathbb{R}^{d})$ into $L^{q}(\mathbb{R}^{d};V)$ for some $q>1$.
Assume that there exists a constant $A>0$ such that
\begin{equation}\label{eqn: kernel condition for IKO 2}
\int_{|x|\geq 2|y|}\|K(x-y)-K(x)\|_{V}dx\leq A
\end{equation}
for all $y\in \mathbb{R}^{d}\setminus\{0\}$.
Then for any $1<p<\infty$, there exists a constant $C>0$ depending on $p,d,A$ and $\|T\|_{\rm OP}$ (depending on $q$) such that
\begin{equation}\label{eqn: IKO is bounded 2}
\|Tf\|_{L^{p}(\mathbb{R}^{d};V)}\leq C\|f\|_{L^{p}(\mathbb{R}^{d})}
\end{equation}
for all $f\in L^{p}(\mathbb{R}^{d})$.
\end{corollary}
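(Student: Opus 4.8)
The plan is to deduce this statement directly from Theorem \ref{thm: IKO is bounded in Lp} by specializing to $V_{1}=\mathbb{R}$ and $V_{2}=V$, after identifying $\mathcal{B}(\mathbb{R},V)$ with $V$ in the obvious way. Every bounded linear operator $S\colon\mathbb{R}\to V$ has the form $S(t)=tv$ for the unique vector $v=S(1)\in V$, and $\|S\|_{{\rm OP}}=\|v\|_{V}$; this gives an isometric isomorphism $\mathcal{B}(\mathbb{R},V)\cong V$. Using it, I would associate to the given $K\colon\mathbb{R}^{d}\to V$ the kernel $\widetilde{K}\colon\mathbb{R}^{d}\times\mathbb{R}^{d}\to\mathcal{B}(\mathbb{R},V)$ defined by letting $\widetilde{K}(x,y)$ be the operator $t\mapsto tK(x-y)$. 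Since $K$ is strongly measurable and locally integrable, so is $\widetilde{K}$, and $\|\widetilde{K}(x,y)\|_{{\rm OP}}=\|K(x-y)\|_{V}$. For a bounded real-valued $f$ with compact support we then have $Tf(x)=\int_{\mathbb{R}^{d}}K(x-y)f(y)\,dy=\int_{\mathbb{R}^{d}}\widetilde{K}(x,y)f(y)\,dy$, so $T$ is exactly the operator attached to $\widetilde{K}$ in Theorem \ref{thm: IKO is bounded in Lp}, and the hypothesis that $T$ is bounded from $L^{q}(\mathbb{R}^{d})$ into $L^{q}(\mathbb{R}^{d};V)$ is precisely the $L^{q}(\mathbb{R}^{d};V_{1})\to L^{q}(\mathbb{R}^{d};V_{2})$ boundedness required there.

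The remaining point is to check that the assumed condition \eqref{eqn: kernel condition for IKO 2} implies the Hörmander condition \eqref{eqn: kernel condition for IKO} for $\widetilde{K}$. Fix $y\neq z$ in $\mathbb{R}^{d}$ and put $w=y-z\neq 0$. Then
\[
\int_{|x-y|\geq 2|y-z|}\|\widetilde{K}(x,y)-\widetilde{K}(x,z)\|_{{\rm OP}}\,dx
=\int_{|x-y|\geq 2|w|}\|K(x-y)-K(x-z)\|_{V}\,dx .
\]
I would perform the change of variables $u=x-y$, which turns the right-hand side into $\int_{|u|\geq 2|w|}\|K(u)-K(u+w)\|_{V}\,du$, and then use symmetry of the norm to rewrite it as $\int_{|u|\geq 2|w|}\|K(u+w)-K(u)\|_{V}\,du$. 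Applying \eqref{eqn: kernel condition for IKO 2} with $-w$ (which is nonzero and has $|-w|=|w|$) in place of $y$ bounds this last integral by $A$. Hence \eqref{eqn: kernel condition for IKO} holds for $\widetilde{K}$ with the same constant $A$, and Theorem \ref{thm: IKO is bounded in Lp} gives, for each $1<p<\infty$, a constant $C=C(p,d,A,\|T\|_{{\rm OP}})$ with $\|Tf\|_{L^{p}(\mathbb{R}^{d};V)}\leq C\|f\|_{L^{p}(\mathbb{R}^{d})}$ for all $f\in L^{p}(\mathbb{R}^{d})$, which is \eqref{eqn: IKO is bounded 2}.

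The argument is essentially bookkeeping, so I do not expect a genuine obstacle; the only step needing a little care is the change of variables in the Hörmander condition, where one must invoke \eqref{eqn: kernel condition for IKO 2} with $-w$ rather than $w$ so that the region $\{|u|\geq 2|w|\}$ matches exactly instead of producing a slightly larger domain of integration.
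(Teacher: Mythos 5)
Your argument is correct and matches the paper's approach exactly: the paper proves this corollary in one line by observing that it is the case $K(x,y)=K(x-y)$, $V_{1}=\mathbb{R}$, $V_{2}=V$ of Theorem \ref{thm: IKO is bounded in Lp}. You have simply spelled out the bookkeeping (the identification $\mathcal{B}(\mathbb{R},V)\cong V$ and the change of variables $u=x-y$ with $y'=-w$ in the H\"ormander condition) that the paper leaves implicit.
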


\begin{proof}
This is the case of $K(x,y)=K(x-y)$, $V_{1}=\mathbb{R}$ and $V_{2}=V$ in Theorem \ref{thm: IKO is bounded in Lp}.
\end{proof}

\begin{theorem}\label{thm: convolution is bdd on Lp}
Let $(\psi_{1},\psi_{2})\in\mathfrak{S}^{2}$
such that $N_1,N_2> d+1+\lfloor\gamma_{1}\rfloor$.
Let $s,l \geq 0$ and $0<a<\infty$. Let $Tf=\left(L_{\psi_{1}}(l)p_{\psi_{2}}(\cdot,s,\cdot)\right)*f$.
Then for any $q\geq 2$ and $1<p<\infty$, it holds that
\begin{itemize}
  \item [\rm{(i)}]
$T\in\mathcal{B}(L^{p}(\mathbb{R}^{d}),L^{p}(\mathbb{R}^{d};V))$ with $V=L^{q}((s,s+a), (t-s)^{\frac{q\gamma_{1}}{\gamma_{2}}-1}dt)$,
  \item [\rm{(ii)}]
  if $\psi_{1}(l,\xi)=\psi_{1}(\xi)$, $\psi_{2}(r,\xi)=\psi_{2}(\xi)$,
i.e., $\psi_{1}(l,\xi)$ and $\psi_{2}(r,\xi)$ are constant with respect to the variables $l$ and $r$, respectively,
and $\psi_{1},\psi_{2}$ satisfy the homogeneity given in \eqref{eqn:homogenioty of psi},
then $T\in\mathcal{B}(L^{p}(\mathbb{R}^{d}),L^{p}(\mathbb{R}^{d};V))$ with $V=L^{q}((s,\infty), (t-s)^{\frac{q\gamma_{1}}{\gamma_{2}}-1}dt)$,
  \item [\rm{(iii)}]
  if $q=2$, then $ T\in\mathcal{B}(L^{p}(\mathbb{R}^{d}),L^{p}(\mathbb{R}^{d};V))$
  with $V=L^{2}((s,\infty), (t-s)^{\frac{2\gamma_{1}}{\gamma_{2}}-1}dt)$.
\end{itemize}
\end{theorem}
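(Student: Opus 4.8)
The plan is to deduce all three statements from the Calder\'{o}n--Zygmund theorem for vector-valued functions in the form of Corollary \ref{cor: boundedness of singular integral}, applied to the $V$-valued kernel $K(x):=L_{\psi_{1}}(l)p_{\psi_{2}}(\cdot,s,x)$ for the appropriate space $V$ in each case. Corollary \ref{cor: boundedness of singular integral} requires two inputs beyond the structural hypotheses: the boundedness of $T$ from $L^{q}(\mathbb{R}^{d})$ into $L^{q}(\mathbb{R}^{d};V)$ for a single exponent $q>1$, and the H\"ormander condition \eqref{eqn: kernel condition for IKO 2} for $K$. Both have been established in the preceding sections, so the present argument is essentially a matter of assembling them.

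First I would fix the data case by case. For (i) take $q\geq 2$ and $V=L^{q}((s,s+a),(t-s)^{q\gamma_{1}/\gamma_{2}-1}dt)$ with $0<a<\infty$; then Theorem \ref{thm: LP ineq p equal lambda}(i) gives $T\in\mathcal{B}(L^{q}(\mathbb{R}^{d}),L^{q}(\mathbb{R}^{d};V))$, and, since $N_{1},N_{2}>d+1+\lfloor\gamma_{1}\rfloor$, Proposition \ref{prop: estimation of int of Kt(x-y)-Kt(x) over outside ball} provides a constant $A>0$ with $\int_{|x|\geq 2|y|}\|K(x-y)-K(x)\|_{V}\,dx\leq A$ for every $y\neq 0$. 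For (ii), under the stated constancy in $l,r$ and the homogeneity \eqref{eqn:homogenioty of psi}, I would instead take $V=L^{q}((s,\infty),(t-s)^{q\gamma_{1}/\gamma_{2}-1}dt)$, use Theorem \ref{thm: LP ineq p equal lambda}(ii) for the $L^{q}$-bound and Proposition \ref{prop: estimation of int of Kt(x-y)-Kt(x) over outside ball} with $a=\infty$ for the H\"ormander condition. For (iii), specialize $q=2$ and $V=L^{2}((s,\infty),(t-s)^{2\gamma_{1}/\gamma_{2}-1}dt)$, invoking Theorem \ref{thm: LP ineq p equal lambda}(iii) and the same Proposition with $a=\infty$.

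Before applying Corollary \ref{cor: boundedness of singular integral} I would check the remaining structural hypotheses of Theorem \ref{thm: IKO is bounded in Lp}: that $x\mapsto K(x)$ is a strongly measurable, locally integrable $V$-valued function on $\mathbb{R}^{d}\setminus\{0\}$, so that $Tf(x)=\int_{\mathbb{R}^{d}}K(x-y)f(y)\,dy$ is well defined for bounded $f$ with compact support, and that this integral operator agrees on such $f$ with the convolution operator $\left(L_{\psi_{1}}(l)p_{\psi_{2}}(\cdot,s,\cdot)\right)*f$; the latter identification follows from \eqref{eqn: kernel} together with Fubini's theorem, while the measurability and local integrability away from the origin follow from the pointwise estimates of Section \ref{sec:Estimates} underlying Lemma \ref{lem: esti of grad K} and Proposition \ref{prop: estimation of int of Kt(x-y)-Kt(x) over outside ball}. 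With these in hand, Corollary \ref{cor: boundedness of singular integral} yields for each $1<p<\infty$ a constant $C>0$ with $\|Tf\|_{L^{p}(\mathbb{R}^{d};V)}\leq C\|f\|_{L^{p}(\mathbb{R}^{d})}$, which is precisely the assertion $T\in\mathcal{B}(L^{p}(\mathbb{R}^{d}),L^{p}(\mathbb{R}^{d};V))$ in each of (i), (ii) and (iii).

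Since the two analytically substantial ingredients---the gradient estimate forcing the H\"ormander condition and the Littlewood--Paley decomposition forcing the $L^{q}$-bound---are already available, the only point requiring genuine care is verifying that convolution by $L_{\psi_{1}}(l)p_{\psi_{2}}(\cdot,s,\cdot)$ fits the abstract template of Theorem \ref{thm: IKO is bounded in Lp}, namely the strong measurability and local integrability of the $V$-valued kernel and the consistency of the two descriptions of $T$; this bookkeeping is where I expect the main (though modest) obstacle to lie.
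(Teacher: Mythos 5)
Your proposal is correct and follows essentially the same route as the paper: invoke Theorem \ref{thm: LP ineq p equal lambda} for the $L^q\to L^q(\mathbb{R}^d;V)$ bound, Proposition \ref{prop: estimation of int of Kt(x-y)-Kt(x) over outside ball} (with $a=\infty$ in cases (ii) and (iii), which that proposition allows) for the H\"ormander condition, and then Corollary \ref{cor: boundedness of singular integral} to conclude. The extra paragraph verifying strong measurability, local integrability, and consistency of the convolution with the integral-kernel form of $T$ is reasonable due diligence that the paper leaves implicit, but it does not change the argument.
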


\begin{proof}
 (i) \enspace By (i) in Theorem \ref{thm: LP ineq p equal lambda}, for any $q\geq 2$, $T\in\mathcal{B}(L^{q}(\mathbb{R}^{d}),L^{q}(\mathbb{R}^{d};V))$.
On the other hand, by Proposition \ref{prop: estimation of int of Kt(x-y)-Kt(x) over outside ball},
the condition given in \eqref{eqn: kernel condition for IKO 2}
with $K(x)=L_{\psi_{1}}(l)p_{\psi_{2}}(\cdot,s,x)$ (for $x\in\mathbb{R}^{d}$)
and $V=L^{q}((s,s+a), (t-s)^{\frac{q\gamma_{1}}{\gamma_{2}}-1}dt)$ holds.
Therefore, by Corollary \ref{cor: boundedness of singular integral},
it holds that
$T\in\mathcal{B}(L^{p}(\mathbb{R}^{d}),L^{p}(\mathbb{R}^{d};V))$ for all $1<p<\infty$.

(ii)\enspace By (ii) in Theorem \ref{thm: LP ineq p equal lambda},
for any $q\geq 2$, $T\in\mathcal{B}(L^{q}(\mathbb{R}^{d}),L^{q}(\mathbb{R}^{d};V))$.
On the other hand, by Proposition \ref{prop: estimation of int of Kt(x-y)-Kt(x) over outside ball},
the condition given in \eqref{eqn: kernel condition for IKO 2}
with $K(x)=L_{\psi_{1}}(l)p_{\psi_{2}}(\cdot,s,x)$ (for $x\in\mathbb{R}^{d}$)
and $V=L^{q}((s,\infty), (t-s)^{\frac{q\gamma_{1}}{\gamma_{2}}-1}dt)$ holds.
Therefore, by Corollary \ref{cor: boundedness of singular integral},
$T\in\mathcal{B}(L^{p}(\mathbb{R}^{d}),L^{p}(\mathbb{R}^{d};V))$ for all $1<p<\infty$.

(iii)\enspace By (iii) in Theorem \ref{thm: LP ineq p equal lambda},
it holds that $T\in\mathcal{B}(L^{2}(\mathbb{R}^{d}),L^{2}(\mathbb{R}^{d};V))$.
On the other hand, by Proposition \ref{prop: estimation of int of Kt(x-y)-Kt(x) over outside ball},
the condition given in \eqref{eqn: kernel condition for IKO 2}
with $K(x)=L_{\psi_{1}}(l)p_{\psi_{2}}(\cdot,s,x)$ (for $x\in\mathbb{R}^{d}$)
and $V=L^{2}((s,\infty), (t-s)^{\frac{2\gamma_{1}}{\gamma_{2}}-1}dt)$ holds.
Therefore, by Corollary \ref{cor: boundedness of singular integral},
it holds that
$T\in\mathcal{B}(L^{p}(\mathbb{R}^{d}),L^{p}(\mathbb{R}^{d};V))$ for all $1<p<\infty$.
\end{proof}

\section{Littlewood-Paley Type Inequality}\label{sec:Main Results}

The following is a main theorem in this paper,
which is a Littlewood-Paley type inequality for a pseudo-differential operator
and an evolution system associated with a symbol of a pseudo-differential operator.

\begin{theorem}\label{thm: Lp boundedness of S lambda infty}
Let $(\psi_{1},\psi_{2})\in\mathfrak{S}^{2}$.
Let $0<a<\infty$ be given.
Suppose that $N_1,N_2> d+1+\lfloor\gamma_{1}\rfloor$.
Then for any $q\geq 2$ and $1<p<\infty$, it holds that
\begin{itemize}
  \item [\rm{(i)}] there exists a constant $C_{1}>0$
depending on $a, d, p, q,\boldsymbol{\gamma},\boldsymbol{\mu}$ and $\boldsymbol{\kappa}$ such that
for any $s,l\geq 0$ and $f\in L^{p}(\mathbb{R}^{d})$,
\begin{equation}\label{eqn: LP ineq for q}
\int_{\mathbb{R}^{d}}\left(\int_{s}^{s+a}(t-s)^{\frac{q\gamma_{1}}{\gamma_{2}}-1}
   |L_{\psi_{1}}(l)\mathcal{T}_{\psi_{2}}(t,s)f(x)|^{q}dt\right)^{\frac{p}{q}}dx
\leq C_{1}\int_{\mathbb{R}^{d}}|f(x)|^{p}dx,
\end{equation}
  \item [\rm{(ii)}]
  if $\psi_{1}(l,\xi)=\psi_{1}(\xi)$, $\psi_{2}(r,\xi)=\psi_{2}(\xi)$,
i.e., $\psi_{1}(l,\xi)$ and $\psi_{2}(r,\xi)$ are constant with respect to the variables $l$ and $r$, respectively,
and $\psi_{1},\psi_{2}$ satisfy the homogeneity given in \eqref{eqn:homogenioty of psi},
then there exists a constant $C_{2}>0$ depending on $d,p,q,\boldsymbol{\gamma},\boldsymbol{\mu},$ and $\boldsymbol{\kappa}$
such that for any $s\geq 0$ and $f\in L^{p}(\mathbb{R}^{d})$,
\begin{equation}\label{eqn: LP ineq for homogeneous}
\int_{\mathbb{R}^{d}}\left(\int_{s}^{\infty}(t-s)^{\frac{q\gamma_{1}}{\gamma_{2}}-1}
 |L_{\psi_{1}}\mathcal{T}_{\psi_{2}}(t,s)f(x)|^{q}dt\right)^{\frac{p}{q}}dx
\leq C_{2}\int_{\mathbb{R}^{d}}|f(x)|^{p}dx,
\end{equation}
  \item [\rm{(iii)}]
  if $q=2$, then there exists a constant $C_{3}>0$
  depending on $d,p,\boldsymbol{\gamma},\boldsymbol{\mu},$ and $\boldsymbol{\kappa}$ such that for any $s,l\geq 0$
  and $f\in L^{p}(\mathbb{R}^{d})$,
\begin{equation}\label{eqn: LP ineq q=2}
\int_{\mathbb{R}^{d}}\left(\int_{s}^{\infty}(t-s)^{\frac{2\gamma_{1}}{\gamma_{2}}-1}
    |L_{\psi_{1}}(l)\mathcal{T}_{\psi_{2}}(t,s)f(x)|^{2}dt\right)^{\frac{p}{2}}dx
\leq C_{3}\int_{\mathbb{R}^{d}}|f(x)|^{p}dx.
\end{equation}
\end{itemize}
\end{theorem}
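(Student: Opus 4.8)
The plan is to obtain Theorem \ref{thm: Lp boundedness of S lambda infty} as an essentially immediate consequence of Theorem \ref{thm: convolution is bdd on Lp} together with the pointwise identity \eqref{eqn: kernel}. First I would fix $s,l\ge0$ and, according to the case, set
\[
K(x):=L_{\psi_{1}}(l)p_{\psi_{2}}(\cdot,s,x),\qquad x\in\mathbb{R}^{d},
\]
regarded as a function with values in $V$, where $V=L^{q}((s,s+a),(t-s)^{\frac{q\gamma_{1}}{\gamma_{2}}-1}dt)$ in case (i), $V=L^{q}((s,\infty),(t-s)^{\frac{q\gamma_{1}}{\gamma_{2}}-1}dt)$ in case (ii), and $V=L^{2}((s,\infty),(t-s)^{\frac{2\gamma_{1}}{\gamma_{2}}-1}dt)$ in case (iii). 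Writing $Tf=K*f$ for the associated convolution operator, the identity \eqref{eqn: kernel} reads
\[
L_{\psi_{1}}(l)\mathcal{T}_{\psi_{2}}(t,s)f(x)=\left(L_{\psi_{1}}(l)p_{\psi_{2}}(t,s,\cdot)\right)*f(x)=(Tf)(x)(t),
\]
so that, by the definitions of the norm on $L^{p}(\mathbb{R}^{d};V)$ and of $\|\cdot\|_{V}$,
\[
\int_{\mathbb{R}^{d}}\left(\int_{s}^{s+a}(t-s)^{\frac{q\gamma_{1}}{\gamma_{2}}-1}\left|L_{\psi_{1}}(l)\mathcal{T}_{\psi_{2}}(t,s)f(x)\right|^{q}dt\right)^{\frac{p}{q}}dx=\|Tf\|_{L^{p}(\mathbb{R}^{d};V)}^{p},
\]
and analogously with $(s,s+a)$ replaced by $(s,\infty)$ in cases (ii) and (iii). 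Thus each of \eqref{eqn: LP ineq for q}, \eqref{eqn: LP ineq for homogeneous}, \eqref{eqn: LP ineq q=2} is just the assertion $\|Tf\|_{L^{p}(\mathbb{R}^{d};V)}^{p}\le C_{i}\|f\|_{L^{p}(\mathbb{R}^{d})}^{p}$ for the corresponding $V$.

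The second step is simply to invoke Theorem \ref{thm: convolution is bdd on Lp}: part (i) there gives $T\in\mathcal{B}(L^{p}(\mathbb{R}^{d}),L^{p}(\mathbb{R}^{d};V))$ for $1<p<\infty$ with the first choice of $V$ (using the hypothesis $N_{1},N_{2}>d+1+\lfloor\gamma_{1}\rfloor$), which yields \eqref{eqn: LP ineq for q} with $C_{1}=\|T\|_{\rm OP}^{p}$; part (ii) yields \eqref{eqn: LP ineq for homogeneous} under the homogeneity hypothesis \eqref{eqn:homogenioty of psi}; and part (iii) yields \eqref{eqn: LP ineq q=2} in the case $q=2$. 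For the constants: $\|T\|_{\rm OP}$ is produced by Corollary \ref{cor: boundedness of singular integral} from the H\"ormander bound of Proposition \ref{prop: estimation of int of Kt(x-y)-Kt(x) over outside ball} and from the $L^{q}\to L^{q}(\mathbb{R}^{d};V)$ operator norm of $T$ furnished by Theorem \ref{thm: LP ineq p equal lambda}; tracing these through confirms the dependence claimed in the statement, namely on $a,d,p,q,\boldsymbol{\gamma},\boldsymbol{\mu},\boldsymbol{\kappa}$ in (i), on $d,p,q,\boldsymbol{\gamma},\boldsymbol{\mu},\boldsymbol{\kappa}$ in (ii) (the absence of $a$ coming from the scaling argument already carried out in the proof of Theorem \ref{thm: LP ineq p equal lambda}(ii)), and on $d,p,\boldsymbol{\gamma},\boldsymbol{\mu},\boldsymbol{\kappa}$ in (iii).

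Since the substantive analytic work -- the gradient estimate for the kernel and the H\"ormander condition in Section \ref{sec:Estimates}, the $L^{q}\to L^{q}(\mathbb{R}^{d};V)$ bound via the Littlewood--Paley decomposition in Sections \ref{sec:Convolution Operators} and \ref{sec:Boundedness of Convolution Operators}, and the Calder\'on--Zygmund extrapolation to all $1<p<\infty$ -- has already been completed, I do not expect a genuine obstacle here. The only point deserving a word of care is the passage from test functions to arbitrary $f\in L^{p}(\mathbb{R}^{d})$: one must confirm that \eqref{eqn: kernel} persists for $f\in L^{p}(\mathbb{R}^{d})$, which it does because $L_{\psi_{1}}(l)p_{\psi_{2}}(t,s,\cdot)\in L^{1}(\mathbb{R}^{d})$ for each $t>s$ (Lemma \ref{lem: boundedness of L psi of p j} summed over the dyadic pieces, using $\sum_{j}2^{j\gamma_{1}}e^{-c(t-s)2^{j\gamma_{2}}}<\infty$), so that $K*f$ is a well-defined element of $L^{p}(\mathbb{R}^{d};V)$ agreeing on $C_{c}^{\infty}(\mathbb{R}^{d})$ with the composition $L_{\psi_{1}}(l)\mathcal{T}_{\psi_{2}}(t,s)f$ and hence, by density together with the boundedness of $T$, on all of $L^{p}(\mathbb{R}^{d})$. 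With this in place the three inequalities follow at once.
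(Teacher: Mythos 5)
Your proposal is correct and takes exactly the paper's route: rewrite the left side of each inequality as $\|Tf\|_{L^{p}(\mathbb{R}^{d};V)}^{p}$ via \eqref{eqn: kernel} and then invoke Theorem \ref{thm: convolution is bdd on Lp}(i)--(iii). The only difference is that you add a short density remark justifying \eqref{eqn: kernel} for general $f\in L^{p}(\mathbb{R}^{d})$, which the paper leaves implicit but which is a welcome point of care rather than a divergence in method.
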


\begin{proof}
From \eqref{eqn: kernel}, for any $f\in L^{p}(\mathbb{R}^{d})$, it is clear that
\begin{align*}
L_{\psi_{1}}(l)\mathcal{T}_{\psi_{2}}(t,s)f(x)
&=\left(L_{\psi_{1}}(l)p_{\psi_{2}}(t,s,\cdot)\right)*f(x),
\end{align*}
and then by applying Theorem \ref{thm: convolution is bdd on Lp}, the proof is immediate.
\end{proof}

\begin{corollary}
Let $q\geq 2$ and let $0<a<\infty$.
Let $\psi\in\mathfrak{S}$ such that $N>d+1+\lfloor\frac{\gamma}{q}\rfloor$, where $N$ is in \textbf{(S2)}.
Then for any $1<p<\infty$, it holds that
\begin{itemize}
   \item [\rm{(i)}] there exists a constant $C_{1}>0$ depending on $d,a,q,p,\kappa,\mu$ and $\gamma$ such that
   for any $s\geq 0$ and $f\in L^{p}(\mathbb{R}^{d})$,
\begin{equation}\label{eqn: LP ineq Laplacian}
\int_{\mathbb{R}^{d}}\left(\int_{s}^{s+a}|(-\Delta)^{\frac{\gamma}{2q}}\mathcal{T}_{\psi}(t,s)f(x)|^{q}dt\right)^{\frac{p}{q}}dx
\leq C_{1}\int_{\mathbb{R}^{d}}|f(x)|^{p}dx,
\end{equation}
   \item [\rm{(ii)}] if $\psi(r,\xi)=\psi(\xi)$,
i.e., $\psi(r,\xi)$ is constant with respect to the variable $r$,
and $\psi$ satisfies the homogeneity given in \eqref{eqn:homogenioty of psi},
then there exists a constant $C_{2}>0$ depending on $d,q,p,\kappa,\mu,$ and $\gamma$
such that for any $s\geq 0$ and $f\in L^{p}(\mathbb{R}^{d})$,
\begin{equation}\label{eqn: LP ineq Laplacian homo}
\int_{\mathbb{R}^{d}}\left(\int_{s}^{\infty}|(-\Delta)^{\frac{\gamma}{2q}}\mathcal{T}_{\psi}(t,s)f(x)|^{q}dt\right)^{\frac{p}{q}}dx
\leq C_{2}\int_{\mathbb{R}^{d}}|f(x)|^{p}dx,
\end{equation}
   \item [\rm{(iii)}]
   if $q=2$, there exists a constant $C_{3}>0$ depending on $d,p,\kappa,\mu$ and $\gamma$ such that
for any $s\geq 0$ and $f\in L^{p}(\mathbb{R}^{d})$,
\begin{equation}\label{eqn: LP ineq Laplacian q=2}
\int_{\mathbb{R}^{d}}\left(\int_{s}^{\infty}|(-\Delta)^{\frac{\gamma}{4}}\mathcal{T}_{\psi}(t,s)f(x)|^{2}dt\right)^{\frac{p}{2}}dx
\leq C_{3}\int_{\mathbb{R}^{d}}|f(x)|^{p}dx.
\end{equation}
 \end{itemize}
\end{corollary}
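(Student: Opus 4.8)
The plan is to obtain this corollary as a direct specialization of Theorem~\ref{thm: Lp boundedness of S lambda infty}. First I would put $\psi_2=\psi$ and take for $\psi_1$ the time-independent symbol $\psi_1(\xi)=-|\xi|^{\gamma/q}$. Membership of $\psi$ in $\mathfrak{S}$ is a hypothesis, and membership of $\psi_1$ follows from Example~\ref{ex: norm xi power gamma} (applied with the exponent $\gamma/q$ in place of $\gamma$, with $k\equiv 0$ and any positive leading constant); crucially, that example allows the order parameter to be \emph{any} natural number, so I would fix $N_1=d+2+\lfloor\gamma/q\rfloor$, which is $\ge\lfloor d/2\rfloor+1$ and satisfies $N_1>d+1+\lfloor\gamma/q\rfloor$. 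Thus $(\psi_1,\psi)\in\mathfrak{S}^2$ with $\gamma_{\psi_1}=\gamma/q$. The operator $L_{\psi_1}$ is the Fourier multiplier with symbol $-|\xi|^{\gamma/q}=-|\xi|^{2\cdot\frac{\gamma}{2q}}$, hence $L_{\psi_1}=-(-\Delta)^{\frac{\gamma}{2q}}$; in particular it does not depend on $l$, and $|L_{\psi_1}\mathcal{T}_\psi(t,s)f(x)|=|(-\Delta)^{\frac{\gamma}{2q}}\mathcal{T}_\psi(t,s)f(x)|$ pointwise.

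The key arithmetic observation is that with $\gamma_1:=\gamma_{\psi_1}=\gamma/q$ and $\gamma_2:=\gamma_\psi=\gamma$ one has $\frac{q\gamma_1}{\gamma_2}-1=0$, so the weight $(t-s)^{\frac{q\gamma_1}{\gamma_2}-1}$ occurring in \eqref{eqn: LP ineq for q}, \eqref{eqn: LP ineq for homogeneous} and \eqref{eqn: LP ineq q=2} is identically $1$, and $\lfloor\gamma_1\rfloor=\lfloor\gamma/q\rfloor$, so the standing hypothesis $N_1,N_2>d+1+\lfloor\gamma_1\rfloor$ of Theorem~\ref{thm: Lp boundedness of S lambda infty} holds ($N_1$ by the choice above, $N_2=N$ by assumption). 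Consequently the left-hand side of \eqref{eqn: LP ineq for q} for this pair equals the left-hand side of \eqref{eqn: LP ineq Laplacian}, and part~(i) of the corollary is precisely part~(i) of Theorem~\ref{thm: Lp boundedness of S lambda infty}.

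For part~(ii) I would note in addition that $\psi_1$ is constant in $l$ and positively homogeneous of degree $\gamma_1=\gamma/q$, since $\psi_1(\lambda\xi)=-\lambda^{\gamma/q}|\xi|^{\gamma/q}=\lambda^{\gamma_1}\psi_1(\xi)$ for $\lambda>0$, while the extra hypothesis of part~(ii) of the corollary is exactly that $\psi=\psi_2$ is constant in $r$ and homogeneous of degree $\gamma=\gamma_2$; hence the homogeneity condition \eqref{eqn:homogenioty of psi} of part~(ii) of Theorem~\ref{thm: Lp boundedness of S lambda infty} is satisfied and \eqref{eqn: LP ineq Laplacian homo} follows. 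For part~(iii), taking $q=2$ gives $\gamma_1=\gamma/2$ and $\frac{\gamma}{2q}=\frac{\gamma}{4}$, and part~(iii) of Theorem~\ref{thm: Lp boundedness of S lambda infty} applies verbatim with $V=L^2((s,\infty),dt)$ to yield \eqref{eqn: LP ineq Laplacian q=2}.

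I do not expect a genuine obstacle here, since all the analytic work is already carried out in the main theorem; the two points to keep in mind are (a) that the freedom in the order parameter granted by Example~\ref{ex: norm xi power gamma} is what lets us satisfy $N_1>d+1+\lfloor\gamma/q\rfloor$, and (b) that the choice $\gamma_1=\gamma/q$ is engineered precisely so that $\frac{q\gamma_1}{\gamma_2}=1$, collapsing the time-weight and reducing the inequality of Theorem~\ref{thm: Lp boundedness of S lambda infty} to the fractional-Laplacian form asserted in the corollary.
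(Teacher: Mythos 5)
Your proposal is correct and follows essentially the same route as the paper's own proof: set $\psi_{1}(\xi)=-|\xi|^{\gamma/q}$, $\psi_{2}=\psi$, invoke Example~\ref{ex: norm xi power gamma} for membership of $\psi_{1}$ in $\mathfrak{S}$, and specialize each part of Theorem~\ref{thm: Lp boundedness of S lambda infty}. You are slightly more careful than the paper in two respects: you make explicit the arithmetic $\frac{q\gamma_{1}}{\gamma_{2}}-1=0$ that collapses the time-weight, and you correctly identify $L_{\psi_{1}}=-(-\Delta)^{\gamma/(2q)}$ (the paper drops the minus sign, which is harmless since only $|L_{\psi_{1}}\mathcal{T}_{\psi}(t,s)f|$ enters).
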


\begin{proof}
Take $\psi_{1}(t,\xi)=-|\xi|^{\frac{\gamma}{q}}$
and $\psi_{2}(t,\xi)=\psi(t,\xi)$ in Theorem \ref{thm: Lp boundedness of S lambda infty}.
Then the symbol $\psi_{1}$ satisfies Assumptions \textbf{(S1)} and \textbf{(S2)}
with $\gamma_{1}=\frac{\gamma}{q}$ and $\kappa_{1}=1$ (see Example \ref{ex: norm xi power gamma}).
In this case, $L_{\psi_{1}}(l)=(-\Delta)^{\frac{\gamma}{2q}}$,
and $\psi_{1}$ is a constant with respect to the variable $t$,
and satisfies that for any $\lambda>0$,
\[
\psi_{1}(t,\lambda \xi)=-|\lambda \xi|^{\frac{\gamma}{q}}
=-\lambda^{\frac{\gamma}{q}}|\xi|^{\frac{\gamma}{q}}=\lambda^{\frac{\gamma}{q}}\psi_{1}(t,\xi).
\]
Therefore, the proof is immediate from Theorem \ref{thm: Lp boundedness of S lambda infty}.
\end{proof}

\begin{corollary}\label{cor: LP for evolution}
Let $q\geq 2$ and let $0<a<\infty$.
Let $\psi\in\mathfrak{S}$ such that $N> d+1+\lfloor\gamma\rfloor$, where $N$ is in \textbf{(S2)}.
Then for any $1<p<\infty$, it holds that
\begin{itemize}
  \item [\rm{(i)}] there exists a constant $C_{1}>0$ depending on $a,d,q,p,\kappa,\mu$ and $\gamma$ such that
  for any $s\geq 0$ and $f\in L^{p}(\mathbb{R}^{d})$,
\begin{equation}\label{eqn: esti of evolution system 1}
\int_{\mathbb{R}^{d}}\left(\int_{s}^{s+a}(t-s)^{q-1}\left|\frac{\partial}{\partial t}\mathcal{T}_{\psi}(t,s)f(x)\right|^{q}dt\right)^{\frac{p}{q}}dx
\leq C_{1}\int_{\mathbb{R}^{d}}|f(x)|^{p}dx
\end{equation}
and
\begin{equation}\label{eqn: esti of evolution system 2}
\int_{\mathbb{R}^{d}}\left(\int_{s}^{s+a}(t-s)^{q-1}\left|\frac{\partial}{\partial s}\mathcal{T}_{\psi}(t,s)f(x)\right|^{q}dt\right)^{\frac{p}{q}}dx
\leq C_{1}\int_{\mathbb{R}^{d}}|f(x)|^{p}dx,
\end{equation}
  \item [\rm{(ii)}]
  if $k\in\mathbb{N}$ and $\psi(r,\xi)=\psi(\xi)$, i.e.,
  $\psi(r,\xi)$ is constant with respect to the variables $r$,
  and $\psi$ satisfies the homogeneity given in \eqref{eqn:homogenioty of psi},
  then there exists a constant $C_{2}>0$ depending on $k,d,q,p,\kappa,\mu$ and $\gamma$ such that
  for any $s\geq 0$ and $f\in L^{p}(\mathbb{R}^{d})$,
\begin{equation}\label{eqn: esti of evolution system 1-2}
\int_{\mathbb{R}^{d}}\left(\int_{s}^{\infty}(t-s)^{kq-1}\left|\frac{\partial^{k}}{\partial t^{k}}\mathcal{T}_{\psi}(t,s)f(x)\right|^{q}dt\right)^{\frac{p}{q}}dx
\leq C_{2}\int_{\mathbb{R}^{d}}|f(x)|^{p}dx
\end{equation}
and
\begin{equation}\label{eqn: esti of evolution system 2-2}
\int_{\mathbb{R}^{d}}\left(\int_{s}^{\infty}(t-s)^{kq-1}\left|\frac{\partial^{k}}{\partial s^{k}}\mathcal{T}_{\psi}(t,s)f(x)\right|^{q}dt\right)^{\frac{p}{q}}dx
\leq C_{2}\int_{\mathbb{R}^{d}}|f(x)|^{p}dx,
\end{equation}
  \item [\rm{(iii)}]
  if $q=2$, there exists a constant $C_{3}>0$ depending on $d,q,p,\kappa,\mu$ and $\gamma$ such that
  for any $s\geq 0$ and $f\in L^{p}(\mathbb{R}^{d})$,
\begin{equation}\label{eqn: esti of evolution system 1-1}
\int_{\mathbb{R}^{d}}\left(\int_{s}^{\infty}(t-s)\left|\frac{\partial}{\partial t}\mathcal{T}_{\psi}(t,s)f(x)\right|^{2}dt\right)^{\frac{p}{2}}dx
\leq C_{3}\int_{\mathbb{R}^{d}}|f(x)|^{p}dx
\end{equation}
and
\begin{equation}\label{eqn: esti of evolution system 2-1}
\int_{\mathbb{R}^{d}}\left(\int_{s}^{\infty}(t-s)\left|\frac{\partial}{\partial s}\mathcal{T}_{\psi}(t,s)f(x)\right|^{2}dt\right)^{\frac{p}{2}}dx
\leq C_{3}\int_{\mathbb{R}^{d}}|f(x)|^{p}dx.
\end{equation}
\end{itemize}
\end{corollary}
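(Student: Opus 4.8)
The plan is to obtain each of \eqref{eqn: esti of evolution system 1}--\eqref{eqn: esti of evolution system 2-1} as a specialization of Theorem~\ref{thm: Lp boundedness of S lambda infty}, after recognizing the time derivatives of $\mathcal{T}_{\psi}(t,s)$ as compositions of pseudo-differential operators with the evolution system. Differentiating under the Fourier integral, which is legitimate for $t>s$ because $|\psi(t,\xi)\exp(\int_{s}^{t}\psi(r,\xi)\,dr)|\le\mu|\xi|^{\gamma}e^{-\kappa(t-s)|\xi|^{\gamma}}$ is integrable in $\xi$, one gets
\[
\frac{\partial}{\partial t}\mathcal{T}_{\psi}(t,s)f=L_{\psi}(t)\mathcal{T}_{\psi}(t,s)f,\qquad
\frac{\partial}{\partial s}\mathcal{T}_{\psi}(t,s)f=-L_{\psi}(s)\mathcal{T}_{\psi}(t,s)f,
\]
and, when $\psi(r,\xi)=\psi(\xi)$ is independent of $r$,
\[
\frac{\partial^{k}}{\partial t^{k}}\mathcal{T}_{\psi}(t,s)f=L_{\psi^{k}}\mathcal{T}_{\psi}(t,s)f,\qquad
\frac{\partial^{k}}{\partial s^{k}}\mathcal{T}_{\psi}(t,s)f=(-1)^{k}L_{\psi^{k}}\mathcal{T}_{\psi}(t,s)f.
\]
Taking $\psi_{1}=\psi$, $\psi_{2}=\psi$ gives $\gamma_{1}/\gamma_{2}=1$, and taking $\psi_{1}=\psi^{k}$, $\psi_{2}=\psi$ gives $\gamma_{1}/\gamma_{2}=k$, so the weights $(t-s)^{q\gamma_{1}/\gamma_{2}-1}$ of Theorem~\ref{thm: Lp boundedness of S lambda infty} are exactly the weights $(t-s)^{q-1}$ and $(t-s)^{kq-1}$ appearing in the corollary.

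For the derivatives in $s$ — and for the $t$-derivatives in part~(ii), where $\psi$ is time-independent so that $L_{\psi^{k}}$ carries no $t$-parameter — the reduction is direct, since the symbol attached to the pseudo-differential operator does not depend on the integration variable $t$. Thus I would apply Theorem~\ref{thm: Lp boundedness of S lambda infty}(i) to \eqref{eqn: esti of evolution system 2} with $\psi_{1}=\psi_{2}=\psi$, $l=s$; Theorem~\ref{thm: Lp boundedness of S lambda infty}(ii) to \eqref{eqn: esti of evolution system 1-2} and \eqref{eqn: esti of evolution system 2-2} with $\psi_{1}=\psi^{k}$, $\psi_{2}=\psi$; Theorem~\ref{thm: Lp boundedness of S lambda infty}(iii) to \eqref{eqn: esti of evolution system 2-1} with $\psi_{1}=\psi_{2}=\psi$; the factor $(-1)^{k}$ is absorbed into $|\cdot|$. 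One needs here that $\psi^{k}$ keeps the homogeneity $\psi^{k}(\lambda\xi)=\lambda^{k\gamma}\psi^{k}(\xi)$ and satisfies \textbf{(S2)} of order $k\gamma$ by the Leibniz rule; $\psi^{k}$ need not satisfy \textbf{(S1)}, but an inspection of Sections~\ref{sec:Estimates}--\ref{sec:Boundedness of Convolution Operators} shows that \textbf{(S1)} is invoked only for the symbol sitting inside the exponential (to bound $|\exp(\int_{s}^{t}\psi_{2})|\le e^{-\kappa_{2}(t-s)|\xi|^{\gamma_{2}}}$), so the argument still runs with $\psi_{1}=\psi^{k}$ provided $N$ is taken large enough that $N>d+1+\lfloor k\gamma\rfloor$ (for $k=1$ the hypothesis $N>d+1+\lfloor\gamma\rfloor$ already suffices).

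The remaining cases — the $t$-derivatives in parts~(i) and~(iii) — involve $L_{\psi}(t)$, whose parameter is the integration variable, so Theorem~\ref{thm: Lp boundedness of S lambda infty} does not apply verbatim. The plan is to observe that its proof goes through unchanged once the fixed parameter $l$ is replaced by $t$: every estimate in the chain Lemmas~\ref{lem: esti of partial psi 1 psi 2}, \ref{lem: esti gradient kernel 1}, \ref{lem: esti of grad K}, \ref{lem: esti 1}, \ref{lem: boundedness of L psi of p j}, Proposition~\ref{prop: estimation of int of Kt(x-y)-Kt(x) over outside ball}, Theorems~\ref{lem: LP ineq for p equal lambda fixed s}, \ref{thm: LP ineq p equal lambda}, \ref{thm: convolution is bdd on Lp} is a pointwise-in-$l$ bound whose constant depends only on $d,\boldsymbol{\gamma},\boldsymbol{\mu},\boldsymbol{\kappa},\boldsymbol{N}$ (and, for the finite interval, $a$) and never on $l$. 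In particular the gradient bound $|\nabla_{x}L_{\psi_{1}}(l)p_{\psi_{2}}(t,s,x)|\le C\bigl(|x|^{-(d+1+\gamma_{1})}\wedge(t-s)^{-(d+1+\gamma_{1})/\gamma_{2}}\bigr)$ of Lemma~\ref{lem: esti of grad K} and the $L^{1}$-estimate $\|L_{\psi_{1}}(l)p_{\psi_{2},j}(t,s,\cdot)\|_{L^{1}}\le C2^{j\gamma_{1}}e^{-c(t-s)2^{j\gamma_{2}}}$ of Lemma~\ref{lem: boundedness of L psi of p j} survive with $l$ replaced by $t$; hence the Hörmander condition of Proposition~\ref{prop: estimation of int of Kt(x-y)-Kt(x) over outside ball} holds for the $V$-valued kernel $x\mapsto\bigl(t\mapsto L_{\psi}(t)p_{\psi}(t,s,x)\bigr)$, the corresponding convolution operator is bounded from $L^{q}(\mathbb{R}^{d})$ into $L^{q}(\mathbb{R}^{d};V)$, and Corollary~\ref{cor: boundedness of singular integral} then yields \eqref{eqn: esti of evolution system 1} and \eqref{eqn: esti of evolution system 1-1}.

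I expect the main obstacle to be exactly this last verification: confirming that nothing in the Calderón–Zygmund machinery secretly exploited the constancy of $l$. I would carry it out by re-reading the proof of Proposition~\ref{prop: estimation of int of Kt(x-y)-Kt(x) over outside ball}, where the kernel is already handled as a $V$-valued function of $x$ alone (with $t$ the ``inner'' variable of $V$) and the mean-value step touches only $\nabla_{x}K(t,\cdot)$, and by checking that the Littlewood–Paley summation in the proof of Theorem~\ref{lem: LP ineq for p equal lambda fixed s} feeds on nothing but the $j$-dependent $L^{1}$-bound of Lemma~\ref{lem: boundedness of L psi of p j}, which is uniform in $l$. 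Once this bookkeeping is settled, parts~(i)--(iii) follow from the specializations of $\psi_{1},\psi_{2}$ and $l$ described above.
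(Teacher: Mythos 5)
Your proof takes the same route as the paper's: identify $\partial_t\mathcal{T}_{\psi}(t,s)=L_{\psi}(t)\mathcal{T}_{\psi}(t,s)$, $\partial_s\mathcal{T}_{\psi}(t,s)=-L_{\psi}(s)\mathcal{T}_{\psi}(t,s)$ (and the $k$-th order analogues with $L_{\psi^k}$ when $\psi$ is time-independent), then specialize Theorem~\ref{thm: Lp boundedness of S lambda infty} with $(\psi_1,\psi_2)=(\psi,\psi)$ or $(\psi^k,\psi)$ and $l\in\{s,t\}$. The paper's own proof is essentially a one-line reference to that theorem ``with $l=t$'' or ``with $l=s$''.

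What you add beyond the paper, and correctly so, is the verification that the substitution $l=t$ is legitimate. Theorem~\ref{thm: Lp boundedness of S lambda infty} as stated asserts a bound uniform in a \emph{fixed} $l$; when $l$ is set equal to the inner integration variable the left-hand side becomes a different quantity, and the theorem cannot literally be invoked. You instead observe that the entire Calder\'on--Zygmund chain (Lemmas~\ref{lem: esti of partial psi 1 psi 2}--\ref{lem: esti of grad K}, Lemma~\ref{lem: boundedness of L psi of p j}, Proposition~\ref{prop: estimation of int of Kt(x-y)-Kt(x) over outside ball}, Theorems~\ref{lem: LP ineq for p equal lambda fixed s}--\ref{thm: convolution is bdd on Lp}) consists of bounds that hold pointwise in $l$ with constants independent of $l$, so the $V$-valued kernel $x\mapsto(t\mapsto L_{\psi}(t)p_{\psi}(t,s,x))$ satisfies the same H\"ormander condition and $L^q\to L^q(V)$ boundedness, and Corollary~\ref{cor: boundedness of singular integral} applies. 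That is the correct way to fill the gap left implicit in the paper. You also correctly flag that $\psi^k$ need not satisfy \textbf{(S1)} (already $\psi(\xi)=-|\xi|^\gamma$ with $k$ even gives $\mathrm{Re}\,\psi^k>0$), so $(\psi^k,\psi)\notin\mathfrak{S}^2$ strictly speaking, but \textbf{(S1)} is used only for $\psi_2$; and that applying the theorem with $\gamma_1=k\gamma$ requires $N>d+1+\lfloor k\gamma\rfloor$, strictly stronger than the stated hypothesis $N>d+1+\lfloor\gamma\rfloor$ when $k\ge 2$. Both points are genuine, and the second indicates that the hypothesis in part~(ii) of the corollary should be strengthened for $k\ge2$. (Separately, for \eqref{eqn: esti of evolution system 1-2} the paper cites the $(s,s+a)$ estimate \eqref{eqn: LP ineq for q} rather than the $(s,\infty)$ estimate \eqref{eqn: LP ineq for homogeneous}; your appeal to the homogeneous case is the intended one.)
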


\begin{proof}
(i) \enspace Note that
\begin{align}
\frac{\partial}{\partial t}\mathcal{T}_{\psi}(t,s)f(x)&=L_{\psi}(t)\mathcal{T}_{\psi}(t,s)f(x),\label{eqn: dt evol. sys}\\
\frac{\partial}{\partial s}\mathcal{T}_{\psi}(t,s)f(x)&=-L_{\psi}(s)\mathcal{T}_{\psi}(t,s)f(x). \label{eqn: ds evol. sys}
\end{align}
Hence, by \eqref{eqn: dt evol. sys} and \eqref{eqn: LP ineq for q} with $\psi_{1}=\psi_{2}=\psi$
and $l=t$, we get \eqref{eqn: esti of evolution system 1}.
Also, by \eqref{eqn: ds evol. sys} and \eqref{eqn: LP ineq for q} with $\psi_{1}=\psi_{2}=\psi$
and $l=s$, we get \eqref{eqn: esti of evolution system 2}

(ii)\enspace
Since $\psi(t,\xi)=\psi(\xi)$, we have
\begin{align}
\frac{\partial^{k}}{\partial t^{k}}\mathcal{T}_{\psi}(t,s)f(x)
&=L_{\psi}^{k}\mathcal{T}_{\psi}(t,s)f(x)
=L_{\psi^{k}}\mathcal{T}_{\psi}(t,s)f(x),\label{eqn: dtk evol. sys}\\
\frac{\partial^{k}}{\partial s^{k}}\mathcal{T}_{\psi}(t,s)f(x)
&=(-1)^{k}L_{\psi}^{k}\mathcal{T}_{\psi}(t,s)f(x)
=(-1)^{k}L_{\psi^{k}}\mathcal{T}_{\psi}(t,s)f(x), \label{eqn: dsk evol. sys}
\end{align}
by \eqref{eqn: dtk evol. sys} and \eqref{eqn: LP ineq for q} with $\psi_{1}=\psi^{k}$, $\psi_{2}=\psi$ and $l=t$,
we get \eqref{eqn: esti of evolution system 1-2}.
Also, by \eqref{eqn: dsk evol. sys} and \eqref{eqn: LP ineq for homogeneous}
with $\psi_{1}=\psi^{k}$, $\psi_{2}=\psi$ and $l=s$, we get \eqref{eqn: esti of evolution system 2-2}.

(iii)\enspace
By \eqref{eqn: dt evol. sys} and \eqref{eqn: LP ineq q=2} with $\psi_{1}=\psi_{2}=\psi$ and $l=t$,
we get \eqref{eqn: esti of evolution system 1-1}. Also, by \eqref{eqn: ds evol. sys} and \eqref{eqn: LP ineq q=2} with $\psi_{1}=\psi_{2}=\psi$ and $l=s$, we get \eqref{eqn: esti of evolution system 2-1}.
\end{proof}

\begin{corollary}\label{cor: classical LP}
Let $\{P_{t}\}_{t\geq 0}$ be the Poisson semigroup given as in \eqref{eqn: Poisson semigp and kernel} and let $s\geq 0$.
Then for any $q\geq 2$, $1<p<\infty$ and $k\in\mathbb{N}$, there exists a constant $C>0$ depending on $d,q,p,$ and $k$
such that
\begin{equation}\label{eqn: cor-stein's result}
\int_{\mathbb{R}^{d}}\left(\int_{s}^{\infty}(t-s)^{kq-1}\left|\frac{\partial^{k}}{\partial t^{k}}P_{t-s}f(x)\right|^{q}dt\right)^{\frac{p}{q}}dx
\leq C\int_{\mathbb{R}^{d}}|f(x)|^{p}dx
\end{equation}
for any $f\in L^{p}(\mathbb{R}^{d})$.
\end{corollary}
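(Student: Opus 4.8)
The plan is to recognize the Poisson semigroup $\{P_t\}_{t\ge0}$ as the evolution system $\mathcal{T}_\psi(t,s)=P_{t-s}$ associated with the time-independent symbol $\psi(\xi)=-|\xi|$, and to obtain \eqref{eqn: cor-stein's result} as a special case of Corollary~\ref{cor: LP for evolution}(ii).

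First I would check that $\psi(\xi)=-|\xi|$ lies in $\mathfrak{S}$ with $\gamma_\psi=1$ and $\kappa_\psi=1$, and that the order parameter $N=N_\psi$ in \textbf{(S2)} may be taken arbitrarily large. Indeed $\psi$ is the special case $\kappa=1$, $k\equiv0$, $\gamma=1$ of the symbols treated in Example~\ref{ex: norm xi power gamma}, so \textbf{(S1)} holds with $\kappa_\psi=1$, while \textbf{(S2)} holds for every $N\in\mathbb{N}$ because of the elementary bound \eqref{eqn:esti of deri of norm of xi power a} with $c=1$; in particular we may fix $N_\psi>d+1+\lfloor\gamma_\psi\rfloor=d+2$. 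Furthermore $\psi$ does not depend on time and is positively homogeneous of degree $1$, i.e. $\psi(\lambda\xi)=-\lambda|\xi|=\lambda^{\gamma_\psi}\psi(\xi)$ for $\lambda>0$, so the homogeneity hypothesis \eqref{eqn:homogenioty of psi} is satisfied. Next, since the Fourier transform of the Poisson kernel $k(t,\cdot)$ in \eqref{eqn: Poisson semigp and kernel} equals $e^{-t|\xi|}$ in the normalization \eqref{eqn:FTand FIT}, we have, for $t>s$,
\[
P_{t-s}f(x)=\mathcal{F}^{-1}\!\Bigl(e^{-(t-s)|\xi|}\mathcal{F}f(\xi)\Bigr)(x)
=\mathcal{F}^{-1}\!\Bigl(\exp\bigl({\textstyle\int_s^t}\psi(\xi)\,dr\bigr)\mathcal{F}f(\xi)\Bigr)(x)
=\mathcal{T}_\psi(t,s)f(x),
\]
and hence $\partial_t^k P_{t-s}f=\partial_t^k\mathcal{T}_\psi(t,s)f$ for every $k\in\mathbb{N}$.

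With these identifications, I would simply invoke part (ii) of Corollary~\ref{cor: LP for evolution}: its hypotheses ($\psi$ time-independent, homogeneous of degree $\gamma_\psi$, and $N_\psi>d+1+\lfloor\gamma_\psi\rfloor$) have all been verified, so for every $k\in\mathbb{N}$, $q\ge2$, $1<p<\infty$ and $f\in L^p(\mathbb{R}^d)$ the inequality \eqref{eqn: esti of evolution system 1-2} holds; rewriting $\mathcal{T}_\psi(t,s)f=P_{t-s}f$ gives exactly \eqref{eqn: cor-stein's result}, and tracing the dependence of the constant (here $\kappa_\psi=1$, $\gamma_\psi=1$, and $\mu_\psi$ depends only on $d$ through the choice of $N_\psi$) shows it depends only on $d,q,p,k$. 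There is no real obstacle in this argument; the only points that merit a sentence are the observation that $-|\xi|$ admits an arbitrarily large smoothness index $N$ in \textbf{(S2)} — which is what lets us meet the hypothesis $N>d+2$ of Corollary~\ref{cor: LP for evolution} — and the routine identification of the Fourier multiplier $e^{-(t-s)|\xi|}$ with $\exp(\int_s^t\psi\,dr)$.
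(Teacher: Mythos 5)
Your proof is correct and follows essentially the same route as the paper: identify $\psi(\xi)=-|\xi|$, note that Example~\ref{ex: norm xi power gamma} places it in $\mathfrak{S}$ with $\gamma_\psi=\kappa_\psi=1$ and arbitrary $N$, observe $\mathcal{T}_\psi(t,s)=P_{t-s}$ and the degree-one homogeneity, and invoke Corollary~\ref{cor: LP for evolution}(ii), i.e.\ \eqref{eqn: esti of evolution system 1-2}. Your explicit verification that $N_\psi$ can be taken larger than $d+1+\lfloor\gamma_\psi\rfloor=d+2$ is a small but worthwhile addition that the paper leaves implicit.
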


\begin{proof}
It holds that
\[
P_{t}f(x)=\mathcal{F}^{-1}\left(e^{-t|\xi|}\mathcal{F}f(\xi)\right)(x).
\]
Let $\psi(t,\xi)=-|\xi|$. Then $\psi$ satisfies
the Assumptions \textbf{(S1)} and \textbf{(S2)} with $\gamma_{1}=\gamma_{2}=1$ and $\kappa_{1}=\kappa_{2}=1$
(see Example \ref{ex: norm xi power gamma}). Also, we have
\[
\mathcal{T}_{\psi}(t,s)f(x)=\mathcal{F}^{-1}\left(e^{-(t-s)|\xi|}\mathcal{F}f(\xi)\right)(x)=P_{t-s}f(x).
\]
Since
\[
\psi(t,\lambda\xi)=\lambda\psi(t,\xi)
\]
for any $\lambda>0$, by \eqref{eqn: esti of evolution system 1-2}, we obtain \eqref{eqn: cor-stein's result}.
\end{proof}

\begin{remark}
\upshape
The inequality given in \eqref{eqn: cor-stein's result} holds for the heat semigroup.
Especially, \eqref{eqn: cor-stein's result} with $q=2$ and $s=0$
coincides with the result obtained in \cite{Stein 1970-2} (see also \cite{Stein 1970}).
More precisely, in \cite{Stein 1970-2}, the author established \eqref{eqn: cor-stein's result} with $q=2$ and $s=0$
for a symmetric diffusion semigroup.
\end{remark}

\medskip
\noindent
{\bfseries Acknowledgements}\enspace
This paper was supported by a Basic Science Research Program through the NRF funded by the MEST (NRF-2022R1F1A1067601)
and the MSIT (Ministry of Science and ICT), Korea, under the ITRC (Information Technology Research Center) support program (IITP-RS-2024-00437284) supervised by the IITP (Institute for Information \& Communications Technology Planning \& Evaluation).

\end{document}